\newcommand{\zz}{{ \mathbb{Z} }}
\newcommand{\rr}{{ \mathbb{R} }}
\newcommand{\qq}{{ \mathbb{Q} }}
\newcommand{\ccal}{{ \mathcal{C} }}
\newcommand{\ecal}{{ \mathcal{E} }}
\newcommand{\jcal}{{ \mathcal{J} }}
\newcommand{\lcal}{{ \mathcal{L} }}
\newcommand{\mcal}{{ \mathcal{M} }}
\newcommand{\smashprod}{\wedge}
\newcommand{\co}{\colon \!}
\newcommand{\fibrep}{{ \widehat{f} }}
\newcommand{\cofrep}{{ \widehat{c} }}
\DeclareMathOperator{\id}{Id}
\DeclareMathOperator{\h}{H}
\DeclareMathOperator{\B}{B}
\DeclareMathOperator{\ho}{Ho}
\DeclareMathOperator{\leftmod}{--mod}
\DeclareMathOperator{\ext}{Ext}
\DeclareMathOperator{\homSSS}{Hom}
\renewcommand{\hom}{\homSSS}
\DeclareMathOperator{\LimSSS}{lim }
\renewcommand{\lim}{\LimSSS}
\DeclareMathOperator{\holim}{holim }
\DeclareMathOperator{\hocolim}{hocolim }
\DeclareMathOperator{\ch}{Ch}
\newtheorem{theorem}{Theorem}[section]
\newtheorem{proposition}[theorem]{Proposition}
\newtheorem{corollary}[theorem]{Corollary}
\newtheorem{lemma}[theorem]{Lemma}
\newtheorem{definition}[theorem]{Definition}
\newtheorem{ex}[theorem]{Example}
\newtheorem{rmk}[theorem]{Remark}
\newtheorem*{proof}{Proof}
\DeclareMathOperator{\poly}{--poly--}
\DeclareMathOperator{\homog}{--homog--}
\DeclareMathOperator{\tors}{tors--}
\DeclareMathOperator{\res}{Res}
\DeclareMathOperator{\ind}{Ind}
\DeclareMathOperator{\Sp}{Sp}
\DeclareMathOperator{\Top}{{Top}}
\DeclareMathOperator{\nat}{Nat}
\DeclareMathOperator{\skel}{sk}
\DeclareMathOperator{\inc}{inc}
\DeclareMathOperator{\mor}{mor}
\newcommand{\lra}{{ \longrightarrow }}
\title{Rational Orthogonal Calculus}
\author{David Barnes}
\date{February 5, 2017}
\begin{document}
\maketitle

\begin{abstract}
\noindent We show that one can use model categories
to construct rational orthogonal calculus.
That is, given a continuous functor from vector spaces to based spaces
one can construct a tower of approximations to this functor
depending only on the rational homology type of the input functor,
whose layers are given by rational spectra with an action of $O(n)$.
By work of Greenlees and Shipley, we see that these layers are classified
by torsion $\h^*(\B SO(n))[O(n)/SO(n)]$--modules.
\end{abstract}

%\tableofcontents

\section{Introduction}
Orthogonal calculus constructs a Taylor
tower for functors from vector spaces to spaces.
The $n^{th}$ layer of this tower is determined by
a spectrum with $O(n)$ action.
Orthogonal calculus has a strong geometric flavour,
for example it was essential to the
results of \cite{ALV07} which shows how the rational homology of a manifold
determines the rational homology of its space of embeddings into a
Euclidean space.
Working rationally is also central to work of Reis and Weiss \cite{RW15}.
Thus it is natural to ask if one can
construct a rationalised version of orthogonal calculus
where the tower of a functor $F$ depends only on the 
(objectwise) rational homology type of $F$.

In this paper we apply the work of \cite{barnesoman13} to construct
suitable model categories that capture the notion of rational orthogonal calculus.
In particular, we show that the layers of the rational tower are classified by rational
spectra with an action of $O(n)$.
By the work of Greenlees and Shipley \cite{GSfree}, we see that these layers are classified
by torsion $\h^*(\B SO(n); \qq)[O(n)/SO(n)]$--modules.
Thus we have a strong technical foundation for
rational orthogonal calculus and a simpler, algebraic characterisation of the layers
which should reduce the amount of effort required in future calculations.
This paper also gives a nice demonstration of how Pontryagin classes are at the heart of orthogonal calculus, as the graded ring $\h^*(\B SO(n))$
is polynomial on the Pontryagin (and Euler) classes.

The main difficulty in the work is  setting up the model structures by careful
use of Bousfield localisations. There are some subtleties involved as we are mixing
left and right Bousfield localisations. The source of the difficulty is that
in general left localisations require left properness of the model structure, but
do not preserve right properness, whilst the reverse situation
generally holds for right localisation.
For example, the model category of $\h \qq$-local spaces
in the sense of Bousfield \cite{bous75} is
created via a left localisation
of the standard model structure on spaces and
is not right proper.

\paragraph{Organisation}
We recap the basic notions of
orthogonal calculus in Section \ref{sec:orthcalc}:
polynomial and homogeneous functors, the classification theorem and the tower,
and the derivative of a functor.
Section \ref{sec:Qspec} has the main result:
the rational version of the classification of $n$-homogeneous functors: 
Theorem \ref{thm:nohomogmodel}.

The rest of the paper is where we give the details of the proof
of the main theorem.
In Section \ref{sec:modcat} we give a brief recap of the notion of Bousfield
localisation and review the construction of the
model categories used in orthogonal calculus.

In Sections \ref{sec:ratpoly} and \ref{sec:rathomog}
we establish model structures for rational $n$-polynomial and
rational $n$-homogeneous functors.
We extend the classification results of Weiss in Section \ref{sec:stablecat}
to the rational setting and finish our description of rational $n$-homogeneous functors.

\paragraph{Acknowledgements} The author wishes to thank Greg Arone, John Greenlees,
Constanze Roitzheim and Michael Weiss for numerous helpful conversations.
The author gratefully acknowledges support from the Engineering and Physical
Sciences Research Council under grant EP/M009114/1.

\section{Orthogonal calculus}\label{sec:orthcalc}
We give a brief overview of orthogonal calculus, introducing the relevant categories and definitions. The primary reference is \cite{weiss95}. 

\subsection{Continuous functors}
Let $\lcal$ be the category of finite dimensional real inner product spaces and isometries. To ensure this category is skeletally small, we assume that these vector spaces are all subspaces of some universe, $\rr^\infty$. Note that for $U \subseteq V$, $\lcal(U,V) = O(V)/O(V-U)$, where $V-U$ denotes the orthogonal complement of $U$ in $V$.
We define a new category $\jcal_0$: it has the same objects as $\lcal$
but its morphism spaces are given by  
$\jcal_0(U,V) = \lcal(U,V)_+$.

Orthogonal calculus studies \textbf{continuous functors} from $\lcal$ to
(based or unbased) topological spaces. We restrict ourselves to the based version
and consider $\Top$-enriched functors $F \co \jcal_0 \to \Top$
where $\Top$ denotes the category of based topological spaces.
Thus $F$ is a functor with the property that the induced map
of spaces
\[
F_{U,V} \co \jcal_0(U,V) \longrightarrow \Top(F(U),F(V))
\]
is continuous (and is associative, unital and 
compatible with composition of maps of vector spaces).
We denote the category of such functors $\jcal_0 \Top$.
Whenever we talk of functors from vector spaces to spaces
we mean an object of $\jcal_0 \Top$.

\subsection{Polynomial and homogeneous functors}

\begin{definition}
An object $F \in \jcal_0 \Top$ is said to be \textbf{$n$-polynomial}
if for each $V \in \jcal_0$ the inclusion map
\[
F(V) \longrightarrow \underset{0 \neq U \subseteq \rr^{n+1}}{\holim} F(U \oplus V)
\]
is a weak homotopy equivalence.
\end{definition}
This definition captures the idea of the value of $F$ at $V$ being recoverable from
the value of $F$ at vector spaces of higher dimension (and the maps between these values).

By \cite[Proposition 4.2]{weiss95} we can give an 
equivalent definition of $n$-polynomial using a
sequence of vector bundles $\gamma_n$ over $\lcal$. Let
\[
\gamma_n(U,V) =
\{
\ (f,x) \ \mid \ f \in \lcal(U,V), \ x \in \rr^n \otimes (V- f(U)) \
\}.
\]
and define $S\gamma_n(U,V)$ be the unit sphere in $\gamma_n(U,V)$.

\begin{lemma}
Define $\tau_n \co \jcal_0 \Top \to \jcal_0 \Top$ as
\[
(\tau_n F)(V) = \nat(S\gamma_{n+1}(V,-)_+, F).
\]
A functor $F$ is $n$-polynomial if and only if
$\eta_F \co F \to \tau_n F$ is an objectwise weak homotopy equivalence,
where $\eta$ is induced by the projection $S\gamma_n(V,W)_+ \to \jcal_0(V,W)$.
\end{lemma}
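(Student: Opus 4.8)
The plan is to identify $(\tau_n F)(V)$ with the homotopy limit $\holim_{0\neq U\subseteq\rr^{n+1}} F(U\oplus V)$ that defines $n$-polynomiality, in a way that carries $\eta_F$ to the canonical map into that homotopy limit; the Lemma then follows from the Definition. First I would pin down $\eta_F$ itself. The enriched Yoneda lemma supplies, for every continuous $F$ and every $V$, a natural homeomorphism $F(V)\cong\nat(\jcal_0(V,-),F)$ given by evaluation at the identity. The bundle projection $p\colon S\gamma_{n+1}(V,W)_+\to\jcal_0(V,W)$, $(f,x)\mapsto f$, is natural in $W$, so restriction along it gives a natural map
\[
p^{\ast}\colon F(V)\cong\nat(\jcal_0(V,-),F)\longrightarrow\nat(S\gamma_{n+1}(V,-)_+,F)=(\tau_n F)(V),
\]
which is $\eta_F$ (and is natural in $V$ as well). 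From here everything is about whether $p^{\ast}$ is a weak homotopy equivalence for every $V$.

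The heart of the matter is a natural weak equivalence $(\tau_n F)(V)\simeq\holim_{0\neq U\subseteq\rr^{n+1}} F(U\oplus V)$, natural in $V$ and in $F$, and compatible with the maps out of $F(V)$ on both sides. Applying the enriched Yoneda lemma again, now with $U\oplus V$ in place of $V$, rewrites the homotopy limit as $\holim_{0\neq U}\nat(\jcal_0(U\oplus V,-),F)$; since $\nat(-,F)$ is continuous it sends homotopy colimits of suitably cofibrant diagrams to homotopy limits, so the problem reduces to a statement not mentioning $F$ at all: the natural map to $\jcal_0(V,-)$ from the homotopy colimit, over the poset of nonzero subspaces $U\subseteq\rr^{n+1}$, of the representables $\jcal_0(U\oplus V,-)$ is weakly equivalent, as an arrow, to $p\colon S\gamma_{n+1}(V,-)_+\to\jcal_0(V,-)$. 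This is exactly the geometry packaged by the bundles $\gamma_n$: a point $(f,x)$ of the sphere bundle has a well-defined support subspace inside $\rr^{n+1}$, the poset of subspaces containing a fixed one has contractible nerve, and so $S\gamma_{n+1}(V,-)_+$ is the homotopy colimit of the corresponding sub-bundles indexed by that poset. I would cite \cite[Proposition 4.2]{weiss95} for this step rather than reprove it.

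Granting the identification, $\eta_F$ becomes, objectwise in $V$, the canonical comparison map $F(V)\to\holim_{0\neq U\subseteq\rr^{n+1}} F(U\oplus V)$; by the Definition this is a weak homotopy equivalence for every $V$ exactly when $F$ is $n$-polynomial, which gives both implications at once. The main obstacle is the identification in the previous paragraph: one must keep two model-categorical subtleties under control -- that the homotopy colimit over the space of subspaces is the homotopically correct one and genuinely matches the point-set sphere bundle $S\gamma_{n+1}(V,-)_+$, and that $\nat(S\gamma_{n+1}(V,-)_+,-)$ is homotopy invariant with no fibrancy hypothesis on $F$, which holds because $S\gamma_{n+1}(V,-)_+$ is built cellularly from representable functors. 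This is precisely where one leans on Weiss's geometric analysis rather than on formal nonsense.
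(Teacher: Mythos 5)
Your argument is correct and, at bottom, takes the same route as the paper: the paper states the Lemma without proof as a restatement of \cite[Proposition 4.2]{weiss95}, and you likewise defer the essential geometric input -- identifying $S\gamma_{n+1}(V,-)_+ \to \jcal_0(V,-)$ with the canonical map out of the homotopy colimit of representables $\jcal_0(U\oplus V,-)$ over the (topological) poset of nonzero $U\subseteq\rr^{n+1}$ -- to Weiss. The scaffolding you add (enriched Yoneda to identify $\eta_F$ as restriction along the bundle projection, passing $\nat(-,F)$ across the homotopy (co)limit, and homotopy invariance because representables are projectively cofibrant while every object of $\jcal_0\Top$ is objectwise fibrant) is a sound explanation of how Weiss's result yields the stated equivalence.
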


Using $\tau_n$ we can construct the $n$-polynomial approximation to an element of $\jcal_0 \Top$.
\begin{definition}
For $F \in \jcal_0 \Top$ let $T_n F$ be the functor
\[
T_n F = \hocolim (
F \overset{\eta_F}{\longrightarrow}
\tau_n^2 F \overset{\eta_{\tau_n F}}{\longrightarrow}
\tau_n^3 F \longrightarrow \dots
).
\]
The map $F \to T_n F$ induced by $\eta_F$ is the
$n$-polynomial approximation to $F$.
\end{definition}
This map has the desired property that if $f \co F \to G$ is a map in $\jcal_0 \Top$
with $G$ an $n$-polynomial functor, then $f$ factors over $F \to T_n F$
(up to homotopy) in a unique way (up to homotopy).

Since this is not clear from the definitions,
we give \cite[Proposition 5.1]{weiss95}.
\begin{proposition}
If $F$ is $(n-1)$-polynomial then $F$ is $n$-polynomial.
\end{proposition}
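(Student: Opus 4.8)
The plan is to use the reformulation from the Lemma above: $F$ is $n$-polynomial exactly when $\eta_F \co F \to \tau_n F$ is an objectwise weak equivalence. The inclusion $\rr^n \hookrightarrow \rr^{n+1}$ induces a fibrewise inclusion of sphere bundles $S\gamma_n(V,-) \hookrightarrow S\gamma_{n+1}(V,-)$ that commutes with the projections to $\jcal_0(V,-)$. Applying $\nat(-,F)$ gives a natural transformation $r \co \tau_n F \to \tau_{n-1} F$, and the two projections fit together so that the triangle $F \xrightarrow{\eta_F} \tau_n F \xrightarrow{r} \tau_{n-1}F$ has composite equal to the structure map $F \to \tau_{n-1}F$. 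Since $F$ is $(n-1)$-polynomial, that composite is an objectwise equivalence, so by the two-out-of-three property it suffices to prove that $r \co \tau_n F \to \tau_{n-1}F$ is an objectwise equivalence. (Equivalently, one may run the whole argument with the homotopy-limit definition, comparing the posets of nonzero subspaces of $\rr^n$ and of $\rr^{n+1}$.)

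Next I would reduce this to a vanishing statement. The objectwise inclusion $S\gamma_n(V,-)_+ \hookrightarrow S\gamma_{n+1}(V,-)_+$ is a cofibration with cofibre a functor $Q(V,-)$, and since $\nat(-,F)$ carries cofibre sequences to fibre sequences, $r$ is an objectwise equivalence if and only if $\nat(Q(V,-),F) \simeq *$ for every $V$. Now one analyses $Q$ using the fibrewise splitting $\gamma_{n+1} \cong \gamma_n \oplus \gamma_1$ over $\lcal(V,W)$ (the fibre over $f$ being $\rr^{n+1}\otimes(W-f(V)) \cong (\rr^n\otimes(W-f(V))) \oplus (W-f(V))$): fibrewise $S\gamma_{n+1}$ is the fibrewise join of $S\gamma_n$ and $S\gamma_1$, so $Q(V,W)$ is built fibrewise from $S\gamma_1(V,W)$ and the Thom space $\mathrm{Th}(\gamma_n)$, its fibre picking up an extra $\dim(W-f(V))$ worth of connectivity. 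In particular $Q(V,W)$ is trivial when $\dim W \le \dim V$ and becomes highly connected as $\dim W$ grows.

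The heart of the matter, and the step I expect to be the main obstacle, is deducing $\nat(Q(V,-),F) \simeq *$ from the hypothesis that $F$ is $(n-1)$-polynomial — morally, that the ``$n$th order part'' of an $(n-1)$-polynomial functor is trivial. Making this precise requires controlling $Q(V,-)$ compatibly with composition in $\jcal_0$ (not merely fibrewise), and exploiting that $(n-1)$-polynomiality means $F \simeq \tau_{n-1}F$ and hence is recovered by iterating $\tau_{n-1}$. A useful auxiliary fact is that the operations $\tau_j$ commute up to natural equivalence — by a Fubini argument for homotopy limits together with the symmetry of $\oplus$ — so $\tau_{n-1}$ applied to the relevant mapping objects does nothing; combining this with the connectivity estimates for $Q$ lets one push through the required induction. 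Throughout, one must remember that the $S\gamma_m(V,-)$ are bundles over the topological morphism spaces $\lcal(V,W)$, so every step is carried out continuously and fibrewise, and that is where the remaining bookkeeping lies.
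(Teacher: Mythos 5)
The paper does not actually prove this statement: it records it as Weiss's Proposition~5.1 and refers the reader to \cite{weiss95}. So the relevant comparison is between your sketch and Weiss's argument, and more importantly whether your sketch stands on its own.

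Your opening reductions are sound. The triangle $F \to \tau_n F \to \tau_{n-1} F$ does commute with the unit $F \to \tau_{n-1}F$ (the bundle inclusion $S\gamma_n(V,-) \hookrightarrow S\gamma_{n+1}(V,-)$ covers the identity of $\jcal_0(V,-)$), so two-out-of-three reduces the claim to showing $r\co \tau_n F \to \tau_{n-1}F$ is an objectwise equivalence. The cofibre-sequence observation is also correct: the homotopy fibre of $r$ at $V$ is $\nat(Q(V,-),F)$ where $Q(V,-)$ is the cofibre of $S\gamma_n(V,-)_+ \hookrightarrow S\gamma_{n+1}(V,-)_+$, and fibrewise $Q(V,W)$ is the Thom space of $\gamma_n(V,W)$ pulled back over the sphere bundle $S\gamma_1(V,W)$.

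The problem is the step you yourself flag as ``the heart of the matter.'' You do not prove $\nat(Q(V,-),F)\simeq *$; you gesture at connectivity estimates for $Q(V,W)$ together with the commutation of the $\tau_j$ and assert that this ``lets one push through the required induction.'' That is the entire mathematical content of the proposition, and it is left unfilled. It is also not clear that connectivity is the right lever: $\nat(Q(V,-),F)$ is a global mapping space, and making $Q(V,W)$ highly connected for large $W$ does not by itself make the natural-transformation space contractible (compare: $\nat(\jcal_0(V,-)\smashprod S^k, F) \cong \Omega^k F(V)$ is not forced to vanish as $k$ grows). What is actually needed is a mechanism by which the hypothesis that $F$ is $(n-1)$-polynomial --- i.e.\ that $F$ is determined by its restriction along $\tau_{n-1}$ --- forces maps out of $Q(V,-)$ into $F$ to be null, and this requires controlling $Q(V,-)$ as a functor (with its $\jcal_0$-action and its filtration coming from $S\gamma_1$), not merely fibrewise. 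Until that argument is supplied, the proposal does not constitute a proof; as written it should be regarded as a plausible reduction plus an open gap at the decisive step, whereas the paper simply defers to Weiss's complete argument.
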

Hence we have a map (unique up to homotopy) from
$T_n E \to T_{n-1} E$ for any $E \in \jcal_0 \Top$.

\begin{definition}
A functor $F \in \jcal_o \Top$ is said to be \textbf{$n$-homogeneous} if
$F$ is $n$-polynomial and $T_{n-1} F$ is objectwise weakly equivalent to a point. 
For $E \in \jcal_0 \Top$ we let $D_n E$ denote the homotopy fibre of the map $T_n E \to T_{n-1} E$. We call $D_n E$ the $n$-homogeneous approximation to $E$.
\end{definition}
The functor $D_n E$ is $n$-homogeneous as $T_n$ and $T_{n-1}$ commute
with homotopy fibres (and finite homotopy limits in general).
In particular $T_{n-1} T_n E = T_{n-1} E$.

\subsection{The tower and the classification}

Now we are in a place to be more definite about orthogonal calculus.
Given a functor $F \in \jcal_0 \Top$, orthogonal calculus constructs the
$n$-polynomial approximations $T_n F$ and the $n$-homogenous approximations
$D_n F$. These can be arranged in a tower of fibrations (analogous to the Postnikov tower).

For each $n \geqslant 0$ there is a
fibration sequence $D_n F \to T_n F \to T_{n-1} F$, which can be arranged as below.
\[
\xymatrix@C+1cm@R-0.5cm{
& \vdots \ar[d] \\
& T_3 F \ar[d] & D_3 F \ar[l] \\
& T_2 F \ar[d] & D_2 F \ar[l] \\
& T_1 F \ar[d] & D_1 F \ar[l] \\
F \ar[r] \ar[ur] \ar[uur] \ar[uuur]
& T_0 F. \\
}
\]
Of course, we need to know much more for this tower to be useful.
For this we have Weiss's classification of the $n$-homogeneous functors
\cite[Theorem 7.3]{weiss95}.

\begin{theorem}
The full subcategory of $n$-homogeneous functors in the
homotopy category of $\jcal_0 \Top$ is equivalent to the
homotopy category of spectra with an action of $O(n)$.
Moreover, given a spectrum $\Psi$ with an action of $O(n)$
the following formula defines an $n$-homogeneous functor in
$\jcal_0 \Top$.
\[
V \mapsto \Omega^\infty ((\Psi \smashprod S^{\rr^n \otimes V})_{hO(n)})
\]
\end{theorem}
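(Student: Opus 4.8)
The plan is to route the statement through an intermediate enriched category. First I would introduce the category $\jcal_n$ with the same objects as $\lcal$ and with morphism spaces the Thom spaces of the bundles $\gamma_n(V,W) \to \lcal(V,W)$, composition being induced by that of $\lcal$ together with the canonical isometries $(W - f(V)) \oplus (X - g(W)) \cong X - gf(V)$; the residual action of $O(n)$ on the $\rr^n$-factor of $\gamma_n$ turns $\jcal_n$ into an $O(n)$-category. The crucial feature is that a $\jcal_n$-functor $E$ comes with structure maps $S^{\rr^n \otimes (W - V)} \smashprod E(V) \to E(W)$ induced by the fibre of $\gamma_n$ over the identity isometry, so that $k \mapsto E(\rr^k)$ is, after reindexing, a spectrum; thus an $O(n)$-equivariant $\jcal_n$-functor yields a spectrum with $O(n)$-action, and conversely a spectrum $\Psi$ with $O(n)$-action gives back the $O(n)$-equivariant $\jcal_n$-functor $V \mapsto \Omega^\infty(\Psi \smashprod S^{\rr^n \otimes V})$. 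The first task is to show that these constructions induce an equivalence between the homotopy category of $O(n)$-equivariant $\jcal_n$-functors and the homotopy category of spectra with $O(n)$-action. There is moreover a ``prolongation'' functor from $O(n)$-equivariant $\jcal_n$-functors to $\jcal_0\Top$ sending the $\jcal_n$-functor attached to $\Psi$ to $E_\Psi \co V \mapsto \Omega^\infty((\Psi \smashprod S^{\rr^n\otimes V})_{hO(n)})$ --- the $O(n)$-homotopy orbits recording the $O(n)$-action on $\jcal_n$ --- and Weiss's $n$-th derivative $F \mapsto F^{(n)}$, which lands in the category of $O(n)$-equivariant $\jcal_n$-functors, is its derived right adjoint.

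I would then show $E_\Psi$ is $n$-homogeneous in two steps. First, that $E_\Psi$ is $n$-polynomial: by the lemma recalled above it suffices that $\eta \co E_\Psi \to \tau_n E_\Psi = \nat(S\gamma_{n+1}(V,-)_+, E_\Psi)$ be an objectwise weak equivalence. Writing $S^{\rr^n\otimes(U\oplus V)} = S^{\rr^n\otimes U}\smashprod S^{\rr^n\otimes V}$ and commuting $\Omega^\infty$, the homotopy orbits and smashing with the fixed spectrum $\Psi \smashprod S^{\rr^n\otimes V}$ past the (finite, hence stable) homotopy limits involved, this reduces to Weiss's connectivity estimates for the sphere bundles of $\gamma_{n+1}$, concretely that $S^0 \to \holim_{0 \neq U \subseteq \rr^{n+1}} S^{\rr^n \otimes U}$ is a weak equivalence; the point is that $\gamma_{n+1}$ supplies one more copy of the relevant orthogonal complement than $E_\Psi$ consumes. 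Second, that $T_{n-1} E_\Psi \simeq \ast$: the telescope defining $T_{n-1}$ is built from $\tau_{n-1} = \nat(S\gamma_n(V,-)_+, -)$, and $S\gamma_n(V,W)$ carries a free $O(n)$-action for $V \neq 0$, so pairing it against the $O(n)$-homotopy orbits inside $E_\Psi$ trivialises them in the limit. Together these show $E_\Psi$ is $n$-homogeneous.

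For the equivalence of categories I would exploit the $n$-th derivative. On an $n$-homogeneous functor $F$ one has $F \simeq D_n F$ and $F^{(n)}$ is a spectrum with $O(n)$-action; conversely the $n$-th derivative of $E_\Psi$ recovers $\Psi$. Combining these with the derived adjunction between the prolongation functor and $(-)^{(n)}$ produces a natural map $E_{F^{(n)}} \to F$, the counit corrected by $F \simeq D_n F$, and one checks it is an objectwise equivalence using that both sides are $n$-homogeneous with the same $n$-th derivative and that an $n$-homogeneous functor $F$ is equivalent to $D_n F$, which is built functorially from $F^{(n)}$ by the formula of the theorem. Full faithfulness of the comparison on homotopy categories then follows formally from the adjunction, once its unit and counit are known to be invertible on the relevant subcategories.

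\textbf{Main obstacle.} I expect the crux to be showing that $E_\Psi$ is genuinely $n$-homogeneous and not merely $n$-polynomial: this rests on the fine connectivity behaviour of the bundles $\gamma_n$, $\gamma_{n+1}$ and of their sphere and Thom spaces, and on carefully commuting $\Omega^\infty$ and $(-)_{hO(n)}$ past the homotopy (co)limits defining $\tau_n$, $T_n$ and $D_n$, the legitimacy of those manipulations relying on the diagrams being finite and hence stable in spectra. The second most delicate point is the claim that differentiation inverts the passage to homotopy orbits, i.e.\ that $(E_\Psi)^{(n)} \simeq \Psi$, which requires a transfer argument --- the analogue, for the free $O(n)$-action built into $\gamma_n$, of the Adams isomorphism.
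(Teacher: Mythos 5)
This theorem is stated in the paper as background --- it is Weiss's classification theorem, cited as \cite[Theorem 7.3]{weiss95} --- and the paper does not reprove it; it is the starting point for the rational analogue proved in Theorem \ref{thm:nohomogmodel}. That said, your outline matches the approach of Weiss and of \cite{barnesoman13}: introduce the $O(n)$-category $\jcal_n$ built from the Thom spaces of $\gamma_n$, identify $O(n)$-equivariant $\jcal_n$-functors (after stabilisation) with spectra carrying an $O(n)$-action, and run the adjunction between the $n$-th derivative $\ind_0^n$ and the orbit/prolongation functor $\res_0^n/O(n)$, the latter yielding exactly the formula $V \mapsto \Omega^\infty((\Psi\smashprod S^{\rr^n\otimes V})_{hO(n)})$.

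One point in the polynomiality step needs correcting. The assertion that $S^0 \to \holim_{0\neq U\subseteq\rr^{n+1}} S^{\rr^n\otimes U}$ is a weak equivalence is false as stated: the source has two path components while the target is connected, and the index category is the topological poset of \emph{all} nonzero subspaces of $\rr^{n+1}$, not a finite cube, so the reduction to a finite total fibre is not available in the naive form you describe. The input Weiss actually uses is the cofibration sequence $S\gamma_{n+1}(V,W)_+ \to \jcal_0(V,W) \to \jcal_{n+1}(V,W)$ together with connectivity estimates for the sphere and Thom spaces of $\gamma_{n+1}$; these identify the obstruction to $\jcal_n(U,-)$ (and hence $E_\Psi$) being $n$-polynomial and show it vanishes. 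It is also worth promoting to a first-class ingredient, rather than a closing remark, the Wirthm\"uller/Adams-type isomorphism for the free $O(n)$-action away from the zero section of $\gamma_n$: both the commutation of $\Omega^\infty(-)_{hO(n)}$ with the homotopy limits defining $\tau_n$ and the recovery $(E_\Psi)^{(n)}\simeq\Psi$ depend on it, and without it the proposal has a genuine gap. With those fixes the skeleton is a faithful outline of the cited proof.
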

In the above, $S^{\rr^n \otimes V}$ is the one-point compactification of
$\rr^n \otimes V$. This has $O(n)$-action induced from the regular representation
of $O(n)$ on $\rr^n$. The smash product is equipped with the diagonal action of
$O(n)$, and $h O(n)$ denotes homotopy orbits.

\section{The rational classification theorem}\label{sec:Qspec}

\subsection{Motivation}

Calculations in the orthogonal calculus can often be difficult.
By considering only the rational 
information of a functor, these calculations can often be simplified, 
such as in \cite{RW15} or \cite{ALV07}.
One implementation of this idea is to alter the constructions so 
that if $f \co F \to G$ in $\jcal_0 \Top$ 
induces a levelwise isomorphism on rational homology, then
the tower of $F$ and $G$ should agree (up to weak homotopy equivalence).

It follows that we need to construct a rational $n$-polynomial
replacement functor $T_n^\qq$. This should have the property that given
any functor $F \in \jcal_0 \Top$, $T_n^\qq F$ is the closest functor
that is both rational and $n$-polynomial. We construct such a functor in
Section \ref{sec:ratpoly}. Hence we have a rational $n$-homogeneous 
functor $D_n^\qq F$ (the homotopy fibre of $T_n^\qq F \to T_{n-1}^\qq F$.
We study rational $n$-homogeneous functors in detail in Section \ref{sec:rathomog}.

A strong piece of evidence that this implementation is the correct one is
that our rational $n$-homogeneous functors are classified in terms
of rational spectra with an action of $O(n)$. 
To that end, we introduce the algebraic model for rational spectra with an action of $O(n)$ 
from \cite{GSfree} so that we can state 
the rationalised version of Weiss's classification theorem.

\subsection{The algebraic model}

The algebraic category is based on the group cohomology of $SO(n)$.
The following calculation is well-known.
The elements $p_i$ have degree $4i$ and represent the Pontryagin classes,
the element $e$ has degree $k$ and represents the Euler class.
\[
\begin{array}{rcl}
\h^*(\B SO(2k+1); \qq) &=&
\qq [p_1, \dots ,p_k] \\
\h^*(\B SO(2k); \qq) &=&
\qq [p_1, \dots ,p_{k-1}, e]
\end{array}
\]
From now on we shall omit the $\qq$ from our notation for cohomology.
We will also use the notation $W=W_{O(2)} SO(2) = O(2)/SO(2)$.

From this data we can construct a rational differential graded algebra $\h^*(\B SO(n))[W]$.
It has basis given by symbols $w^i \cdot r$,
where $r \in \h^*(\B SO(n))$ and $i$ is either 0 or 1.
The multiplication is given by the formulae below,
where $r,s \in  \h^*(\B SO(n))$ and $w(r)$ is the image of $r$ under
the ring isomorphism $w$. The unit is $w^0 \cdot 1$ and all differentials are zero.
\[
\begin{array}{rcl}
\begin{array}{rcl}
(w^0 r) (w^0 s) & = & w^0 \cdot rs \\
(w^0 r) (w^1 s) & = & w^1 \cdot w(r) s \\
\end{array}
& &
\begin{array}{rcl}
(w^1 r) (w^0 s) & = & w^1 \cdot rs \\
(w^1 r) (w^1 s) & = & w^0 \cdot w(r)s \\
\end{array}
\end{array}
\]

\begin{definition}
Let $\ch(\qq[W])$ denote the category of rational
chain complexes with an action of $W$.
The morphisms are those maps compatible with the $W$-action.

Let $\h^*(\B SO(n))[W] \leftmod$ denote the category of
$\h^*(\B SO(n))[W]$-modules.
Every such module determines an object of $\ch(\qq[W])$
by forgetting the action
of the characteristic classes.
\end{definition}

We can also describe $\h^*(\B SO(n))[W] \leftmod$ as the category of
$\h^*(\B SO(n))$-modules in $\ch(\qq[W])$, where the $W$-action on $\h^*(\B SO(n))$
is induced by the conjugation action of $W=O(n)/SO(n)$ on $SO(n)$.

We need to add one final condition to get the correct (model) category, 
see \cite{GSfree} for details.
\begin{definition}
An object $M$ of $\h^*(\B SO(n))[W] \leftmod$ is said to be \textbf{torsion}
if for each polynomial generator $x$ of $\h^*(\B SO(n))$ and each $m \in M$,
there is an $n$ such that $x^n m=0$.
The full subcategory of $\h^*(\B SO(n))[W] \leftmod$ consisting of
torsion modules is denoted $\tors \h^*(\B SO(n))[W] \leftmod$.

This category has a model structure where the weak equivalences
are the homology isomorphisms of underlying chain complexes
and the cofibrations are the monomorphisms.
\end{definition}

\subsection{The main theorem}

\begin{theorem}\label{thm:nohomogmodel}
A rational $n$-homogeneous functor from vector spaces
to based topological spaces is uniquely determined (up to homotopy)
by a torsion $\h^*(\B SO(n))[W]$--module.
That is, the following two model categories are Quillen equivalent.
\[
n \homog \jcal_0 \Top_\qq
\simeq
\tors \h^*(\B SO(n))[W] \leftmod
\]
\end{theorem}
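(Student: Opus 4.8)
The plan is to assemble a finite zig-zag of Quillen equivalences
\[
n\homog \jcal_0 \Top_\qq \ \simeq\ \text{(rational spectra with an action of $O(n)$)} \ \simeq\ \tors \h^*(\B SO(n))[W] \leftmod ,
\]
where the second equivalence is the Greenlees--Shipley classification of free rational $G$-spectra \cite{GSfree} applied to $G = O(n)$, and the first is a rationalisation of Weiss's classification of $n$-homogeneous functors, realised at the level of model categories in the style of \cite{barnesoman13}.

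First I would construct the target model category $n\homog\jcal_0\Top_\qq$ (Sections \ref{sec:ratpoly} and \ref{sec:rathomog}). Starting from the projective model structure on $\jcal_0\Top$, one left Bousfield localises at the maps inverted by the polynomial approximation $T_n$ together with the objectwise rational homology equivalences, producing a \emph{rational $n$-polynomial} model structure whose fibrant objects are the $n$-polynomial functors with objectwise $\h\qq$-local values. One then right Bousfield localises (cellularises) this model structure at a suitable set of $n$-homogeneous generators to obtain $n\homog\jcal_0\Top_\qq$, whose cofibrant--fibrant objects are, up to weak equivalence, the rational $n$-homogeneous functors. The integral templates for both steps are in \cite{barnesoman13}; the new work is to interleave them with the rational localisation and to verify the required hypotheses at each stage.

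The main obstacle is the properness bookkeeping flagged in the introduction. A left Bousfield localisation needs left properness, which is available (diagram categories of spaces are left proper, and this is preserved by rational localisation), but it does not preserve right properness; on the other hand the cellularisation producing the $n$-homogeneous structure needs the cellularity and properness hypotheses of the right-localisation machinery. The resolution is to perform the localisations in a careful order, to work with explicit generating sets so that the cellularity conditions can be checked directly, and to exploit that the homogeneous generators are already fibrant in the rational $n$-polynomial structure; this is the technical heart of Sections \ref{sec:ratpoly} and \ref{sec:rathomog}.

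With these model categories in place, the rational Weiss classification (Section \ref{sec:stablecat}) is obtained by rationalising the zig-zag of \cite{barnesoman13} that connects $n\homog\jcal_0\Top$ to spectra with an $O(n)$-action through categories of $\jcal_n$-spaces carrying a stable model structure: one $\h\qq$-localises each category in the chain and checks that every Quillen equivalence descends. This reduces to the observation that the derived functors involved --- restriction and induction along $\jcal_0 \to \jcal_n$, stabilisation, $\Omega^\infty$, and the homotopy orbit functor $(-)_{hO(n)}$ --- preserve and detect objectwise rational equivalences, using that $O(n)$ is a compact Lie group so that homotopy orbits commute with rationalisation. Finally, a rational spectrum with an $O(n)$-action is the same as a free rational $O(n)$-spectrum, and \cite{GSfree} identifies the category of these with torsion differential graded $\h^*(\B SO(n))$-modules; the disconnectedness of $O(n)$ enters only through the component group $W = O(n)/SO(n)$, which acts on $SO(n)$, and hence on $\h^*(\B SO(n))$, by conjugation, upgrading the identification to $\tors\h^*(\B SO(n))[W]\leftmod$ and closing the zig-zag.
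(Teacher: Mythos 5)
Your overall strategy --- build a rational $n$-homogeneous model structure by localisation, rationalise the Weiss classification zig-zag through spectra with $O(n)$-action, then invoke Greenlees--Shipley --- is exactly the paper's. The genuine gap is in the order of localisations you propose. You suggest first forming the rational $n$-polynomial model structure (two left Bousfield localisations of the projective structure, at $S_n$ and at the rationalising acyclic cofibrations) and then right Bousfield localising (cellularising) \emph{that} at the homogeneous generators. The paper deliberately avoids this order: left localisation does not preserve right properness, and Bousfield's $L_{\h\qq}\Top$ is not right proper, so $n\poly\jcal_0\Top_\qq$ is not established to be right proper and Hirschhorn's right localisation theorem need not apply to it. The fix you offer --- that the homogeneous generators $\jcal_n(V,-)$ are already fibrant in the rational $n$-polynomial structure --- is false: fibrant objects there must be objectwise $\h\qq$-local, while the values $\jcal_n(V,W)$ are Thom spaces, which are generally not $\h\qq$-local.

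The paper's actual order is: left localise at $S_n$ to get $n\poly\jcal_0\Top$, right localise to get $n\homog\jcal_0\Top$, and only then left localise at $\jcal_n\smashprod J_{\qq\Top}$. The enabling fact, which your outline does not surface, is that $n\homog\jcal_0\Top$ is \emph{stable} (homogeneous functors deloop); by the arguments of \cite{barnesroitzheimstable} a stable right Bousfield localisation is left proper and cellular, so it can safely be left-localised once more, and the result remains proper and cellular. In short, the rationalising localisation must come last, after the model structure has become stable --- this is the content of Theorem \ref{thm:rathomog} and the lemma preceding it. Once that order is corrected, the rest of your outline --- hitting the rationalising generators with the left adjoints, checking the images are rational equivalences, and applying \cite[Proposition 3.3.18]{hir03} and \cite[Proposition 2.3]{hov01} to descend the integral Quillen equivalences, then composing with Greenlees--Shipley --- matches the paper's Propositions \ref{prop:QEstable1} and \ref{prop:indrational}.
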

\begin{proof}
The model category $n \homog \jcal_0 \Top_\qq$ is defined in Theorem \ref{thm:rathomog}.
By Propositions \ref{prop:QEstable1} and \ref{prop:indrational}
the model category of rational $n$-homogeneous functors is Quillen equivalent to
the model category of rational spectra with an $O(n)$-action.
By \cite[Theorem 1.1]{GSfree} the model categories of rational spectra with an $O(n)$-action
and torsion $\h^*(\B SO(n))[W]$--modules are Quillen equivalent.
\end{proof}

This is a substantial simplification of the classification in terms
of spectra with an $O(n)$-action.
For example, the first derivative of an object of $\jcal_0 \Top$ is uniquely classified
(up to homotopy) by a chain complex $N$, with an action of $W$.
Furthermore, the second derivative of an object of $\jcal_0 \Top$ is uniquely classified
(up to homotopy) by an chain complex $M$, which has an action of $\qq[e]$
($e$ has degree 2) and an action of $W$, where
$w \ast (e \cdot m) = - e \cdot (w \ast m)$.

The Quillen equivalence of Greenlees and Shipley is a composite of
a number of adjunctions, so we leave the details to the reference.
However, if $E$ is an $O(n)$-spectrum and
$M \in \tors \h^*(\B SO(n))[W] \leftmod$ is the corresponding object
(under the series of Quillen equivalences), then the homology of $M$
is determined by the relation
\[
\h_*(M) \cong
\pi^{SO(n)}_*(ESO(n)_+\smashprod E)=
\h^{SO(n)}_*(S^{A_{n}}\smashprod E).
\]
In the above, $\h^{SO(n)}_*(X)$ denotes homology
of the Borel construction applied to the spectrum $X$
(that is, the homology of the $SO(n)$-homotopy orbits of $X$)
and $A_n$ is the adjoint
representation of $SO(n)$ at the identity.
Thus $A_n$ is the $n(n-1)/2$-dimensional vector space of
skew symmetric matrices with $SO(n)$ acting by conjugation.
The cap product induces the action of $\h^*(\B SO(n))$.
The spectrum $E$ had an $O(n)$-action and we have taken
$SO(n)$-orbits (or fixed points), hence $W=O(n)/SO(n)$
acts on the homology and homotopy groups above.

The Quillen equivalence is also equipped with an Adams
spectral sequence \cite[Theorem 9.1]{GSfree} allowing one to perform calculations easily.
In the following, $r$ is the rank of the maximal torus of $O(n)$,
so this is $n/2$ for even $n$ and $(n-1)/2$ when $n$ is odd.

\begin{theorem}
Let $X$ and $Y$ be free rational $O(n)$-spectra.
There is a natural Adams spectral sequence
\[
\ext^{*,*}_{\h^*(\B SO(n))[W]}
(\pi_*^{SO(n)}(X), \pi_*^{SO(n)}(Y))
\Longrightarrow
[X,Y]_*^{O(n)}.
\]
It is a finite spectral sequence concentrated in
rows 0 to $r$ and is strongly convergent for all $X$ and $Y$.
\end{theorem}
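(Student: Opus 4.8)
The plan is to recognise this statement as the algebraic Adams (hyper-$\ext$) spectral sequence attached to the algebraic model of \cite{GSfree}, transported along that equivalence, and to control its size by computing the global dimension of the differential graded algebra $\h^*(\B SO(n))[W]$. Since $X$ and $Y$ are free rational $O(n)$-spectra, \cite[Theorem 1.1]{GSfree} (the equivalence also underlying Theorem \ref{thm:nohomogmodel}) identifies them with torsion $\h^*(\B SO(n))[W]$-modules $M$ and $N$, and — as recorded after Theorem \ref{thm:nohomogmodel} — one has $\h_*(M)\cong\pi^{SO(n)}_*(X)$ and $\h_*(N)\cong\pi^{SO(n)}_*(Y)$ as $\h^*(\B SO(n))[W]$-modules, while $[X,Y]^{O(n)}_*\cong[M,N]_*$, the graded group of maps in the derived category of $\tors\h^*(\B SO(n))[W]\leftmod$. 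So the task reduces to producing, naturally in $M$ and $N$, a conditionally convergent spectral sequence with $E_2$-page $\ext^{s,t}_{\h^*(\B SO(n))[W]}(\h_*M,\h_*N)$ abutting to $[M,N]_{t-s}$.

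To build that spectral sequence I would use that $\h^*(\B SO(n))[W]$ carries the zero differential, so it is a graded ring $R$ whose derived category is an ordinary unbounded derived category of dg-$R$-modules. Choosing a $\mathrm{K}$-projective replacement $P\to M$, the standard obstruction-theoretic argument — valid because the homology of a free dg-$R$-module is a free graded $R$-module — realises a projective resolution of the graded $R$-module $\h_*(M)$ as a Postnikov filtration of $P$ by free dg-modules; applying $[-,N]_*$ to this filtration yields the desired spectral sequence with $E_2$-term $\ext^{*,*}_{R}(\h_*M,\h_*N)$ converging to $[M,N]_*$. Functoriality of the cofibrant replacement in the model structures of \cite{GSfree} makes the construction natural in $X$ and $Y$. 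This is precisely \cite[Theorem 9.1]{GSfree}, so the only point needing separate comment is why the spectral sequence occupies exactly rows $0$ through $r$.

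For that I would compute $\mathrm{gl.dim}\,R$ from the quoted calculation: $\h^*(\B SO(n))$ is a polynomial ring on precisely $r=\lfloor n/2\rfloor$ homogeneous generators (the $k$ Pontryagin classes when $n=2k+1$, and the $k-1$ Pontryagin classes together with the Euler class when $n=2k$), so its graded global dimension is $r$, and in particular every torsion module has projective dimension at most $r$ (the maximal ideal already attains $r$ via a Koszul resolution). Adjoining the order-two group $W$ does not increase this: over $\qq$ the group ring $\qq[W]$ is semisimple, so any $\h^*(\B SO(n))$-projective resolution of length $\le r$ of a $W$-equivariant module can be promoted to a $W$-equivariant one by averaging, giving $\mathrm{gl.dim}\,\h^*(\B SO(n))[W]=r$. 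Hence $E_2^{s,*}=0$ for $s>r$, the spectral sequence is concentrated in rows $0$ to $r$, and any spectral sequence with only finitely many nonzero rows is finite and converges strongly, with no $\lim^1$-obstruction and only finitely many potentially nonzero differentials. The one thing to be careful about — the only real obstacle — is that the indexing and grading conventions of the algebraic model of \cite{GSfree} line up so that the $E_2$-term is genuinely $\ext$ over $\h^*(\B SO(n))[W]$ rather than over some Koszul-dual or degree-shifted ring; this, together with the fact that torsion modules have projective dimension at most $r$, is supplied by the cited results, and the theorem is then exactly \cite[Theorem 9.1]{GSfree} read through the equivalences above.
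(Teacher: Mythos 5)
Your proposal is correct and takes essentially the same approach as the paper, which states this result as a direct quotation of \cite[Theorem 9.1]{GSfree} without reproducing a proof. Your reconstruction of the underlying argument --- transporting $X$ and $Y$ to torsion $\h^*(\B SO(n))[W]$--modules via the Quillen equivalence, building the spectral sequence from a Postnikov filtration of a $\mathrm{K}$-projective replacement (using that the differential on the ring is zero), and bounding the rows by the global dimension $r$ of $\h^*(\B SO(n))[W]$ via Hilbert's syzygy theorem together with semisimplicity of $\qq[W]$ over $\qq$ --- correctly fills in the detail that the paper delegates to the reference, and the finite-rows-implies-strong-convergence step is exactly right.
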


% In nice cases, such as \cite[Corollary 9.2]{GSfree},
% the object $M$ corresponding to an $O(n)$-spectrum $E$
% can be found by simply knowing the groups
% $\h^{SO(n)}_*(S^{A_{n}}\smashprod E)$.

We give some examples to illustrate Theorem \ref{thm:nohomogmodel}.
Many more calculations in rational orthogonal calculus
can be found in Reis and Weiss \cite{RW15}.

\begin{ex}
Consider the functor which sends a vector space $V$ to
$S^{nV}=S^{\rr^n \otimes V}$. This functor can also be described as $\jcal_n(0,V)$, 
see Section \ref{sec:stablecat}.
We want to find the rational $n^{th}$-derivative of this functor.
To do so, we use the Quillen equivalences of 
Propositions \ref{prop:QEstable1} and \ref{prop:indrational}.
Letting $\mathbb{L}$ indicate derived functors, we have the following diagram
of objects.
\[\xymatrix@C+1cm{
O(n)_+ \smashprod \jcal_1(0,-)
&
\ar@{|->}[l]_-{\mathbb{L} (-)\smashprod_{\jcal_n}\jcal_1}
O(n)_+ \smashprod \jcal_n(0,-)
\ar@{|->}[r]^-{\mathbb{L} \res_0^n/O(n)} &
\jcal_n(0,-)
}\]
That is, the functor $\jcal_n(0,-)$ is the image of
the $O(n)$-spectrum $O(n)_+ \smashprod \jcal_1(0,-)$ in
the homotopy category of $n$-homogeneous functors.
The $O(n)$-spectrum $O(n)_+ \smashprod \jcal_1(0,-)$ is more commonly known as
$O(n)_+ \smashprod \mathbb{S}$, where $\mathbb{S}$ denotes the
(non-equivariant) sphere spectrum

Thus the $n^{th}$-derivative of $V \mapsto S^{nV}$
is $O(n)_+ \smashprod \mathbb{S}$.
If we then work rationally, the $n^{th}$-derivative  is
$O(n)_+ \smashprod \h \qq$. Furthermore, we can identify the
algebraic model for this spectrum in
$\tors \h^*(\B SO(n))[W] \leftmod$.
Taking homotopy groups we obtain
\[
\pi_*^{SO(n)} (O(n)_+ \smashprod \mathbb{S}) =
\h_*^{SO(n)}(S^{A_n} \smashprod SO(n)_+)[W]
\cong \Sigma^{n(n-1)/2} \qq[W]
\]
with the Pontryagin (and Euler) classes acting as zero for degree reasons.
If $X$ is a chain complex having this homology, then
$X \simeq \Sigma^{n(n-1)/2} \qq[W]$ by a simple formality argument.
Hence the rational $n^{th}$-derivative of $V \mapsto S^{nV}$
is given by $\Sigma^{n(n-1)/2} \qq[W]$.
\end{ex}

\begin{ex}
The previous example is the case $U=0$ of the representable functor 
$V \mapsto \jcal_n(U,V)$ as defined in Section \ref{sec:stablecat}.
This functor from vector spaces to spaces is like a shift desuspension of 
$V \mapsto \jcal_n(0,V)=S^{nV}$. Indeed, the corresponding object 
to $\jcal_n(U,-)$ in the 
homotopy category of $n$-homogeneous functors is the functor
$V \mapsto O(n)_+ \smashprod \jcal_n(U,V)$, which \emph{is} 
the shift desuspension of $\jcal_n(0,-)$. As the $n$-homogeneous
model structure is stable, we want to find the algebraic model for the 
desuspension of $V \mapsto \jcal_n(0,V) =S^{nV}$ by the sphere $S^{nU}$, 
where $O(n)$ acts on the $\rr^n$ in $S^{nU} = S^{\rr^N \otimes U}$. 

The $O(n)$-spectrum $S^{n\rr}$ corresponds to the object $\widetilde{Q}[n]$ in
the algebraic model: the sign representation of $O(n)$ in degree $n$
with zero differential. Similarly $S^{nU}$ for $U$ a vector space of dimension $u$
corresponds to $\widetilde{\qq}^{\otimes u}[nu]$. So for even $u$,
this is just $\qq$ in degree $nu$ and for odd $u$ it is $\widetilde{Q}$
in degree $nu$.

It follows that the algebraic model for $\jcal_n(U,-)$ is the 
desuspension of  $\Sigma^{n(n-1)/2} \qq[W]$ given by 
tensoring (over $\qq$) with  $\widetilde{\qq}^{\otimes u}[-nu]$.
As $\qq \otimes \qq[W] = \qq[W] = \qq[W] \otimes \widetilde{\qq}$, 
we see that $\jcal_n(U,-)$ corresponds to $\Sigma^{n(n-1)/2 -nu} \qq[W]$.
\end{ex}

\begin{ex}
A related example is the functor which sends
$V$ to $S^{nV}/hO(n)$, where $O(n)$ acts on
$\rr^n \otimes V$ by the standard action.

The $n^{th}$-derivative of this in $O(n) \ltimes (\jcal_n \Top)$
is $\jcal_n(0,-)$, which in turn corresponds to the sphere spectrum
(with trivial action) in $O(n)\Sp$.
So rationally the derivative is $\h \qq$.
The algebraic model for this spectrum has homology given by
$\h_*^{SO(n)}(S^{A_n})$, where $A_n$ is the adjoint
representation of $SO(n)$ at the identity.
If we ignore the action of $W$, this is
essentially a suspension of $\h_*(\B SO(n))$
with the Pontryagin classes acting by the cap product.
Hence the rational $n^{th}$-derivative of $V \mapsto S^{nV}/hO(n)$
is (a twisted suspension) of $\h_*(\B SO(n))$.
\end{ex}

\begin{ex}
The functor $V \mapsto BO(V)$ is often considered as 
the orthogonal calculus equivalent of the identity functor. 
It has first derivative the 
sphere spectrum and second derivative the desuspension of the 
by sphere spectrum (each with trivial action) by \cite[Example 2.7]{weiss95}.
The remaining are rationally trivial by \cite[Theorem 4]{Arone02}.
It follows that the algebraic model for derivatives of $BO$ 
are $\qq$ in degree 0,  $\qq$ in degree -1 and $0$ for higher derivatives.
\end{ex}

\section{Model categories for orthogonal calculus}\label{sec:modcat}

To prove the rational classification theorem we use the language of model
categories and Quillen functors. 
In this section we recall the construction of the $n$-polynomial and $n$-homogeneous
model structures from \cite{barnesoman13}. We start with some basic model category notions.

\subsection{Cellular, topological and proper model categories}

Just as one has a simplicial model category,
there is the notion of a topological model category.
See \cite[Definition 5.12]{mmss01} for more details.

\begin{definition} \label{def:topological}
Let $\mcal$ be a model category that is enriched, tensored and cotensored in
(based) topological spaces, with $\hom_\mcal(-,-)$ denoting the enrichment.
For maps $i: A \to X$ and $p: E \to B$ in $\mathcal{M}$,
there is a map of spaces
\[
\hom_\mathcal{M} (i^\ast, p_\ast): \hom_\mathcal{M}(X, E) \to \hom_\mathcal{M} (A, E) \times_{\hom_\mathcal{M} (A,B)} \hom_\mathcal{M} (X,B)
\]
induced by $\hom_\mathcal{M}(i, id)$ and
$\hom_\mathcal{M}(i, p)$.
We say that $\mathscr{M}$ is a \textbf{topological model category}
if $\mathcal{M} (i^\ast, p_\ast)$ is a Serre fibration of spaces
whenever $i$ is a cofibration and $p$ is a fibration and further that
$\mathcal{M} (i^\ast, p_\ast)$
is a weak equivalence if (in addition) either $i$ or $p$ is a weak equivalence.
\end{definition}

\begin{definition}
Let $\mathcal{M}$ be a model category and let the following be a commutative square in $\mathcal{M}$:
\[
\xymatrix{
A \ar[r]^f \ar[d]_i & B \ar[d]^j\\
C \ar[r]_g & D\\
}.
\]
$\mathcal{M}$ is called \textbf{left proper} if, whenever $f$ is a weak equivalence, $i$ a cofibration and the square is a pushout, then $g$ is also a weak equivalence.
$\mathcal{M}$ is called \textbf{right proper} if, whenever $g$ is a weak equivalence, $j$ a fibration, and the square is a pullback, then $f$ is also a weak equivalence.
$\mathcal{M}$ is called \textbf{proper} if it is both left and right proper.
\end{definition}

The definition of a cellular model category \cite[Definition 12.1.1]{hir03} 
is a complicated extension of the notion of cofibrantly generated model category, so we
define the easier notion \cite[Definition 13.2.1]{hir03}
and leave the extra details to the reference.

\begin{definition}
A \textbf{cofibrantly generated model category} is a model category $\mcal$ with sets of maps $I$ and $J$ such that $I$ and $J$ support the small object argument (see \cite[Definitions 15.1.1 and 15.1.7.]{mp12}) and
\begin{enumerate}
\item a map is a trivial fibration if and only if it has the right lifting property with respect to every element of $I$, and
\item a map is a fibration if and only if it has the right lifting property with respect to every element of $J$.
\end{enumerate}
A \textbf{cellular model category} is a cofibrantly generated model category where
further technical restrictions (about factoring through subcomplexes and cofibrations being effective monomorphisms) are placed on the sets of generating cofibrations and acyclic cofibrations.
\end{definition}

Relevant examples of cellular model categories include:
simplicial sets, topological spaces, sequential
spectra, symmetric spectra and orthogonal spectra (with a group acting).
These are all proper and (with the exception of simplicial sets) topological.

\subsection{Bousfield localisations}

With the exception of Section \ref{sec:Qspec} we will be using topological model
categories. These categories have the advantage that we can use the enrichment in
topological spaces to define the weak equivalences of left or right localisations.
This avoids the more complicated terminology of
homotopy function complexes, see \cite[Section 17]{hir03}.
The essential point is that for cofibrant $X$ and fibrant $Y$ in a
topological model category $\mcal$ we have
\[
\pi_n \hom_\mcal(X,Y) = [S^n \otimes  X, Y]^{\mcal}.
\]
Where $\hom_\mcal(-,-)$ denotes the enrichment, $\otimes$ is the tensoring, and $[-,-]^{\mcal}$ is maps in the homotopy category of $\mcal$.
For general $X$ and $Y$ in $\mcal$ we define
\[
\rr \hom_\mcal(X,Y) =\hom_\mcal(\cofrep X,\fibrep Y)
\]
where $\cofrep$ denotes cofibrant replacement
and $\fibrep$ denotes fibrant replacement in $\mcal$.

We now summarise Hirschhorn's results on left and right Bousfield localisations,
see \cite[Sections 4 and 5]{hir03}.
The techniques of Bousfield localisation of model categories allow one to construct a new model structure from a given model category with a larger class of weak equivalences. This was used in \cite{barnesoman13} to construct the
$n$-polynomial and $n$-homogeneous model structures on $\jcal_0 \Top$
from the objectwise model structure. We begin with the necessary terminology.

\begin{definition}
  Let $S$ be a set of maps in a topological model category $\mcal$.
  An object $Z$ is said to be \textbf{$S$-local} if
  for any map $s \co A \to B$ in $S$, there is a weak homotopy
  equivalence of spaces
  \[
  s^* \co \rr \hom_\mcal(B,Z) \lra \rr \hom_\mcal(A,Z).
  \]
  A map $f \co X \to Y$ is said to be an \textbf{$S$-equivalence} if for any
  $S$-fibrant object $Z$ there is  a weak homotopy
  equivalence of spaces
  \[
  f^* \co \rr \hom_\mcal(Y,Z) \lra \rr \hom_\mcal(X,Z).
  \]
  We say that $Z$ is \textbf{$S$-fibrant} if it is $S$-local and fibrant in $\mcal$.
\end{definition}

We have an almost dual set of definitions for right localisations.
The $K$-equivalences we define below are sometimes called
$K$-coequivalences or $K$-colocal equivalences.

\begin{definition}
  Let $K$ be a set of cofibrant objects in a topological model category $\mcal$.
  A map $f \co X \to Y$ is said to be a \textbf{$K$-equivalence} if for any
  $k \in K$ there is  a weak homotopy
  equivalence of spaces
  \[
  f_* \co \rr \hom_\mcal(k, X) \lra \rr \hom_\mcal(k, Y).
  \]
  An object $C$ is said to be \textbf{$K$-colocal} if for any
  $K$-equivalence $f \co X \to Y$ in $S$, there is a weak homotopy
  equivalence of spaces
  \[
  f_* \co \rr \hom_\mcal(C, X) \lra \rr \hom_\mcal(C, Y).
  \]
  We say that $C$ is \textbf{$K$-cofibrant} if it is
  $K$-colocal and cofibrant in $\mcal$.
\end{definition}

Notice that every weak equivalence of $\mcal$ is both a $K$-equivalence and an $S$-equivalence for any set of objects $K$ and any set of maps $S$.

\begin{theorem}
  Let $\mcal$ be a cellular and left proper topological model category
  and $S$ be a set of cofibrations. There
  is a cellular and left proper topological model category
  $L_S \mcal$, with the same cofibrations as $\mcal$, whose weak equivalences
  are the $S$-equivalences. The class of fibrant objects is the
  class of $S$-fibrant objects.
\end{theorem}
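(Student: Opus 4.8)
The plan is to deduce this from Hirschhorn's existence theorem for left Bousfield localisations of cellular left proper model categories \cite[Theorem 4.1.1]{hir03}; the only genuinely new point is to match the derived topological mapping spaces $\rr\hom_\mcal(-,-)$ used above with the simplicial homotopy function complexes that Hirschhorn works with.

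First I would check that, for a topological model category $\mcal$, the singular complex of $\rr\hom_\mcal(X,Y)=\hom_\mcal(\cofrep X,\fibrep Y)$ is a homotopy function complex from $X$ to $Y$ in Hirschhorn's sense. The key input is the defining axiom of a topological model category (Definition \ref{def:topological}): taking $i$ or $p$ to be a weak equivalence shows that $\hom_\mcal(\cofrep X,\fibrep Y)$ sends weak equivalences between cofibrant objects in the first variable, and between fibrant objects in the second variable, to weak equivalences of spaces; combined with Ken Brown's lemma and the gluing lemma this gives a bifunctorial, homotopy invariant mapping-space construction, and the enriched composition of $\mcal$ supplies the required composition and units. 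Consequently the notions of $S$-local object and $S$-equivalence defined above via $\rr\hom_\mcal$ coincide with those defined by Hirschhorn via homotopy function complexes.

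Granting this identification, \cite[Theorem 4.1.1]{hir03} produces the model category $L_S\mcal$ with the asserted properties: same underlying category and cofibrations as $\mcal$, weak equivalences the $S$-equivalences, fibrant objects the $S$-fibrant objects, and the structure is again cellular (the generating cofibrations are unchanged, and a set of generating trivial cofibrations is manufactured from $S$ by the small object argument) and left proper (left localisations preserve left properness). It remains to see that $L_S\mcal$ is topological. The tensoring and cotensoring over spaces, and the cofibrations, are unchanged, so for a cofibration $i$ and fibration $p$ of $L_S\mcal$ the map $\hom_\mcal(i^*,p_*)$ is still a Serre fibration, as $p$ is a fortiori an $\mcal$-fibration and $i$ an $\mcal$-cofibration; and when $p$ (or $i$) is in addition a weak equivalence of $\mcal$ it is a weak equivalence of $L_S\mcal$, so that case is inherited. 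The remaining case --- $i$ a cofibration which is an $S$-equivalence, $p$ an arbitrary fibration of $L_S\mcal$ --- is handled exactly as in Hirschhorn's proof that the left localisation of a simplicial model category is simplicial: one reduces, via the lifting characterisation of $L_S\mcal$-fibrations and an $S$-fibrant replacement, to the statement that $i^*\co \rr\hom_\mcal(Y,Z)\to\rr\hom_\mcal(X,Z)$ is a weak equivalence for $Z$ an $S$-fibrant object, which is the very definition of an $S$-equivalence.

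The main obstacle is the first step: being careful that the topological enrichment really does furnish a homotopy function complex in the precise sense demanded by \cite{hir03}, so that the simplicially phrased machinery there applies verbatim. Once that is in place the rest is bookkeeping, strictly parallel to the simplicial case.
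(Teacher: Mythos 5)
Your proposal is correct and takes essentially the same approach as the paper: the paper offers no proof of this theorem, presenting it only as a summary of Hirschhorn's Sections 4 and 5 of \cite{hir03}, and your argument supplies exactly the reduction to \cite[Theorem 4.1.1]{hir03} together with the topological-versus-simplicial bridging that the paper takes for granted. The only cosmetic remark is that identifying $\rr\hom_\mcal$ with a Hirschhorn homotopy function complex is most cleanly done by applying the singular complex functor to the enrichment so that $\mcal$ becomes a simplicial model category and then invoking Hirschhorn's comparison result for simplicial mapping spaces, rather than rebuilding the identification by hand from the SM7 axiom, Ken Brown's lemma and the gluing lemma.
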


\begin{theorem}
  Let $\mcal$ be a cellular and right proper topological model category
  and $K$ be a set of cofibrant objects. There
  is a right proper topological model category
  $R_K \mcal$, with the same fibrations as $\mcal$, whose weak equivalences
  are the $K$-equivalences. The class of cofibrant objects is the class of
  $K$-cofibrant objects.
\end{theorem}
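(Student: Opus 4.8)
The plan is to obtain this statement as a special case of Hirschhorn's existence theorem for right Bousfield localisations of cellular, right proper model categories, \cite[Theorem~5.1.1]{hir03}; the only point that genuinely needs attention is that the topological enrichment appearing in our definitions computes the same homotopy function complexes that Hirschhorn works with. (Note that, in contrast with the left-localisation statement, we do not claim that $R_K\mcal$ is cellular: right localisations generally fail to be cellular, for instance the generating cofibrations need no longer be effective monomorphisms, which is precisely why the conclusion here is only ``right proper topological''.)

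First I would check that the notions of $K$-equivalence, $K$-colocal object and $K$-cofibrant object introduced above agree with Hirschhorn's $K$-colocal equivalences, $K$-colocal objects and $K$-colocal cofibrant objects. For this it suffices to know that $\rr\hom_\mcal(X,Y) = \hom_\mcal(\cofrep X, \fibrep Y)$ is a homotopy function complex from $X$ to $Y$ in the sense of \cite[Section~17]{hir03}; in a topological model category this is exactly the content of the identity $\pi_n \hom_\mcal(X,Y) = [S^n \otimes X, Y]^{\mcal}$ for cofibrant $X$ and fibrant $Y$ recorded above. Equivalently, applying $\sing$ to the enrichment turns $\mcal$ into a simplicial model category, as in \cite{mmss01}, without changing any of the classes of maps or objects in question. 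Granting this, the hypotheses of \cite[Theorem~5.1.1]{hir03} are satisfied, and we obtain a cofibrantly generated model category $R_K\mcal$ whose fibrations are the fibrations of $\mcal$, whose weak equivalences are the $K$-equivalences, which is right proper, and whose cofibrant objects are precisely the $K$-cofibrant objects.

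It then remains to see that $R_K\mcal$ is a topological model category. The relevant bookkeeping is: the cofibrations of $R_K\mcal$ form a subclass of those of $\mcal$ (enlarging the weak equivalences enlarges the trivial fibrations, hence shrinks the cofibrations), the fibrations are literally unchanged, and therefore the trivial cofibrations are unchanged too, since a map is a trivial cofibration exactly when it has the left lifting property against every fibration. Hence, for a cofibration $i$ and a fibration $p$ of $R_K\mcal$, the map $\hom_\mcal(i^\ast, p_\ast)$ is a Serre fibration because $i$ and $p$ are a cofibration and a fibration of $\mcal$; and if $i$ is in addition a weak equivalence of $R_K\mcal$ it is a trivial cofibration of $\mcal$, so $\hom_\mcal(i^\ast, p_\ast)$ is a trivial Serre fibration by the topological axiom for $\mcal$. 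The only case that does not reduce directly to $\mcal$ is when $p$ is a fibration that is a $K$-equivalence but not a weak equivalence of $\mcal$; this is dealt with exactly as in Hirschhorn's argument that the right localisation of a simplicial model category is again simplicial, see \cite{hir03}, applied to the simplicial model category attached to $\mcal$ by $\sing$ and transported back along geometric realisation, using that the topological tensoring of $\mcal$ is untouched by the localisation.

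I expect the main obstacle to be this bookkeeping rather than any new homotopy theory: making the identification of $\rr\hom_\mcal$ with a homotopy function complex precise enough to quote \cite{hir03} verbatim, and tracking which of the four distinguished classes of maps grow and which shrink so that the pushout--product axiom can be verified case by case. No input beyond \cite[Theorem~5.1.1]{hir03} and the standard dictionary between topological and simplicial enrichments should be required.
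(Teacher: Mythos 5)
The paper does not give its own proof of this statement; it simply records it as a summary of Hirschhorn's results on right Bousfield localisation, so your proposal, which derives it from \cite[Theorem~5.1.1]{hir03} together with the standard topological/simplicial dictionary, is exactly the intended route. Your bookkeeping (fibrations and trivial cofibrations unchanged, cofibrations shrink, deferring the one nontrivial pushout--product case to Hirschhorn's simplicial argument, and correctly noting that cellularity is not claimed for the right localisation) is accurate and fills in detail that the paper leaves implicit.
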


It is immediate that the identity functor is a left Quillen functor from
$\mcal \to L_S \mcal$ and a left Quillen functor from
$R_K \mcal \to \mcal$.

\subsection{Application to orthogonal calculus}

The model structures used in \cite{barnesoman13} are defined using Bousfield localisations of (more standard) model structures. We give the relevant results
and indicate where we have Quillen equivalences.

We begin with the objectwise or projective model structure on
continuous functors from $\jcal_0$ to $\Top$. We let $\skel \lcal$ denote a skeleton of 
the category of finite dimensional vector spaces and isometries (it is also a skeleton
of $\jcal_0$). 
\begin{proposition}\label{prop:objecwise}
There is an \textbf{objectwise model structure} on $\jcal_0 \Top$
where a fibration is an objectwise Serre fibration of based spaces
and a weak equivalence is an objectwise weak homotopy equivalence.
This model category is cellular and proper with
generating cofibrations and acyclic cofibrations given by
\[
\begin{array}{rcl}
I_{l} & = & \{
\jcal_0(V,-) \smashprod i \ | \
V \in \skel \lcal, \
i \in I_{\Top} \} \\
J_{l} & = & \{
\jcal_0(V,-) \smashprod j \ | \
V \in \skel \lcal, \
j \in J_{\Top} \}.
\end{array}
\]
\end{proposition}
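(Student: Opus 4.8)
The plan is to produce this model structure by transferring the standard cofibrantly generated structure on $\Top$ along the adjunction that evaluates a functor at all of its objects simultaneously. Let $\mathcal{O}$ be the (small) set of objects of $\skel\lcal$ and consider
\[
F \co \prod_{V \in \mathcal{O}} \Top \;\rightleftarrows\; \jcal_0\Top \;\co\; G ,
\]
where $G(X) = (X(V))_{V \in \mathcal{O}}$ and its left adjoint is $F\big((A_V)_V\big) = \bigvee_{V} \jcal_0(V,-) \smashprod A_V$; this adjunction is the enriched Yoneda lemma applied one coordinate at a time. Give the source the product of the Serre model structures on $\Top$; it is cofibrantly generated, with generating cofibrations (resp. acyclic cofibrations) the maps concentrated in a single coordinate $V \in \mathcal{O}$ and equal there to some $i \in I_{\Top}$ (resp. $j \in J_{\Top}$), the other coordinates being the one-point space. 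Because smashing with the point and wedging with the point do nothing, $F$ carries these generators precisely to the sets $I_l$ and $J_l$ of the statement. Finally, $Gf$ is a weak equivalence (resp. fibration) in $\prod_V \Top$ if and only if $f$ is an objectwise weak homotopy equivalence (resp. objectwise Serre fibration), so once the transfer succeeds we obtain exactly the asserted model structure.

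To run the transfer (Kan's lifting theorem, \cite[Theorem 11.3.2]{hir03}) I would check two things. First, that $I_l$ and $J_l$ permit the small object argument: by the adjunction $\mathrm{Hom}_{\jcal_0\Top}(\jcal_0(V,-)\smashprod C, X) \cong \mathrm{Hom}_{\Top}(C, X(V))$, and since colimits in $\jcal_0\Top$ are formed objectwise, the value at each $V$ of an $I_l$- or $J_l$-cell complex is an $I_{\Top}$- or $J_{\Top}$-cell complex; the domains of $I_{\Top}$ and $J_{\Top}$ are compact spaces, hence small, so the domains of $I_l$ and $J_l$ are small in $\jcal_0\Top$. Second, that every relative $J_l$-cell complex is an objectwise weak homotopy equivalence: evaluating at $W \in \mathcal{O}$, such a map is a relative cell complex built from the maps $\jcal_0(V,W) \smashprod j = \lcal(V,W)_+ \smashprod j$ with $j \in J_{\Top}$, and since $\lcal(V,W) = O(W)/O(W-V)$ is a compact smooth manifold and therefore a CW complex, each such $\lcal(V,W)_+ \smashprod j$ is again an acyclic cofibration of spaces; hence so is the cell complex, and in particular it is an objectwise weak equivalence.

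It then remains to promote ``cofibrantly generated'' to \textbf{cellular} and to check \textbf{properness}. For cellularity one verifies the extra clauses of \cite[Definition 12.1.1]{hir03}: that the domains and codomains of $I_l$ are compact relative to $I_l$, that the domains of $J_l$ are small relative to $I_l$, and that cofibrations are effective monomorphisms. Each of these is detected objectwise using the adjunction above and reduces to the corresponding statement in $\Top$, which is cellular; in particular a cofibration of $\jcal_0\Top$ is a retract of an $I_l$-cell complex and so is objectwise a cofibration of spaces, while effective monomorphisms are equalizers and hence also objectwise. Properness is argued the same way: pushouts, pullbacks and weak equivalences in $\jcal_0\Top$ are computed objectwise, fibrations are objectwise fibrations by definition, and cofibrations evaluate to cofibrations of spaces; since $\Top$ is left and right proper, $\jcal_0\Top$ inherits both.

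The main obstacle is the acyclicity condition in the second paragraph, that relative $J_l$-cell complexes are weak equivalences: this is the one genuinely non-formal point, and it rests on the geometric fact that the morphism spaces $\lcal(V,W) = O(W)/O(W-V)$ are CW complexes, so that smashing with $\lcal(V,W)_+$ is homotopically well behaved. Everything else is routine objectwise transport from $\Top$. (This structure and its properties are established in \cite{barnesoman13}, building on \cite{mmss01}.)
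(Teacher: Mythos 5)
The paper's proof is a one-line citation to \cite[Theorem 6.5]{mmss01}, which establishes the level (projective) model structure on enriched diagram categories by exactly the kind of transfer along the evaluation adjunction that you carry out. Your reconstruction is correct and matches that approach, and you have correctly isolated the non-formal step: the acyclicity condition holds because the morphism spaces $\lcal(V,W)=O(W)/O(W-V)$ are compact manifolds (hence CW), so smashing $\lcal(V,W)_+$ with a generating acyclic cofibration of $\Top$ again yields an acyclic cofibration, and colimits in $\jcal_0\Top$ being objectwise then gives that relative $J_l$-cell complexes are objectwise weak equivalences.
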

\begin{proof}
See \cite[Theorem 6.5]{mmss01}.
\end{proof}

We now alter the objectwise model structure on $\jcal_0 \Top$ to obtain
a model category whose homotopy category is the category of
$n$-polynomial objects up to homotopy. For details
see \cite[Section 6]{barnesoman13}. In particular that reference shows that 
the model structure is right proper, despite it being a left Bousfield localisation.

\begin{proposition}
The \textbf{$n$-polynomial model structure} on $\jcal_0 \Top$
is the left Bousfield localisation of the objectwise model structure
at the set of maps
\[
S_n =
\{
s_n^V \co S\gamma_{n+1} (V, -)_+ \longrightarrow \jcal_0(V,-) \ | \ V \in \skel \lcal
\}.
\]
The fibrant objects are the $n$-polynomial functors,
the cofibrations are as for the objectwise model structure
and the weak equivalences are those maps $f$ such that $T_n f$
is an objectwise weak equivalence. The natural transformation $\id \to T_n$
gives a fibrant replacement functor.
We denote this model structure by $n \poly \jcal_0 \Top$. It is
proper and cellular.
\end{proposition}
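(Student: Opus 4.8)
The plan is to identify $L_{S_n}(\jcal_0\Top)$, the left Bousfield localisation of the objectwise model structure at $S_n$, with the $n$-polynomial model structure, and then to read off the remaining assertions. The objectwise model structure is cellular, proper and topological by Proposition~\ref{prop:objecwise} and \cite[Theorem~6.5]{mmss01}, so the left localisation theorem quoted above applies as soon as $S_n$ is arranged to consist of cofibrations; replacing each $s_n^V$ by the inclusion of its source into its mapping cylinder does this and alters neither the class of $S_n$-equivalences nor that of $S_n$-fibrant objects. This yields a cellular, left proper, topological model category $L_{S_n}(\jcal_0\Top)$ with the objectwise cofibrations and with fibrant objects the $S_n$-fibrant objects, so the cellularity, left properness and cofibration claims are immediate.

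I would then identify the $S_n$-fibrant objects. Let $Z$ be objectwise fibrant. As $\jcal_0(V,-)$ is cofibrant, enriched Yoneda gives $\rr\hom(\jcal_0(V,-),Z)\simeq Z(V)$; as $S\gamma_{n+1}(V,-)_+$ is cofibrant (see \cite{barnesoman13}), the definition of $\tau_n$ gives $\rr\hom(S\gamma_{n+1}(V,-)_+,Z)\simeq(\tau_nZ)(V)$. Under these identifications $(s_n^V)^\ast$ is the component at $V$ of $\eta_Z\co Z\to\tau_nZ$, so $Z$ is $S_n$-local exactly when $\eta_Z$ is an objectwise weak equivalence, which by the lemma characterising $n$-polynomial functors via $\tau_n$ means exactly that $Z$ is $n$-polynomial. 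Hence the fibrant objects of $L_{S_n}(\jcal_0\Top)$ are the objectwise-fibrant $n$-polynomial functors.

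The substantive step is to show that $\eta\co\id\to T_n$ (composed with an objectwise fibrant replacement where needed) is a fibrant replacement functor for $L_{S_n}(\jcal_0\Top)$. That $T_nF$ is $n$-polynomial, and that $T_n$ preserves objectwise weak equivalences, is part of \cite{weiss95}, so $T_nF$ is $S_n$-fibrant up to objectwise weak equivalence; the key point is that $F\to T_nF$ is an $S_n$-equivalence, i.e.\ that $\rr\hom(T_nF,Z)\to\rr\hom(F,Z)$ is a weak equivalence for every objectwise-fibrant $n$-polynomial $Z$. On $\pi_0$ this is precisely the homotopy-universal property of $F\to T_nF$ among maps to $n$-polynomial functors recalled after the definition of $T_n$. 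For the higher homotopy groups, observe that the cotensor $Z^{S^k}$ is again $n$-polynomial, since $(-)^{S^k}=\Omega^k$ is a right adjoint and hence commutes with the homotopy limit over $0\ne U\subseteq\rr^{n+1}$ in the definition of $n$-polynomiality; as $\rr\hom(-,Z^{S^k})\simeq\Omega^k\rr\hom(-,Z)$, applying the universal property with target $Z^{S^k}$ upgrades the $\pi_0$-statement to an isomorphism on all homotopy groups. Granting this, the commuting square $\eta_G\circ f=(T_nf)\circ\eta_F$ together with a two-out-of-three argument — using that an $S_n$-equivalence between $S_n$-fibrant objects is an objectwise weak equivalence — shows that $f$ is an $S_n$-equivalence if and only if $T_nf$ is an objectwise weak equivalence, which is the claimed description of the weak equivalences; since fibrant replacement detects weak equivalences in a left localisation, this also confirms that $T_n$ is the fibrant replacement functor.

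It remains to treat right properness, which is the only non-formal point, since left Bousfield localisation need not preserve it. I would argue as in \cite{barnesoman13}: a fibration of $L_{S_n}(\jcal_0\Top)$ is in particular an objectwise fibration, so a pullback square along such a map is an objectwise homotopy pullback; applying $T_n$, which commutes with finite homotopy limits, yields again an objectwise homotopy pullback; and if the map being pulled back is an $S_n$-equivalence then its $T_n$-image is an objectwise weak equivalence, hence so is the $T_n$-image of the pulled-back map, which is therefore an $S_n$-equivalence. The main obstacles are thus this right-properness argument and the lemma that $F\to T_nF$ is an $S_n$-equivalence; existence, the cofibrations, and the description of the fibrant objects are then formal consequences of Hirschhorn's localisation machinery.
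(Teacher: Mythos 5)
The paper does not actually prove this proposition; it states it as a summary and defers entirely to \cite[Section~6]{barnesoman13}, noting only that the cited reference is where right properness (unusual for a left localisation) is established. Your reconstruction is sound and follows the strategy of that reference: existence, cofibrations and cellularity from Hirschhorn's machinery once $S_n$ is replaced by cofibrations; identification of $S_n$-local objects with $n$-polynomial functors via enriched Yoneda and the definition of $\tau_n$; the key lemma that $\eta_F\co F\to T_nF$ is an $S_n$-equivalence, deduced from the homotopy-universal property of $T_n$; and right properness from the fact that fibrations in $L_{S_n}$ are in particular objectwise fibrations, so pullbacks along them are objectwise homotopy pullbacks, which $T_n$ preserves. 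Your cotensor trick (pass to $Z^{S^k}$, which is again $n$-polynomial since $\Omega^k$ commutes with the defining holim) is a clean way to upgrade the $\pi_0$ bijection from the universal property to an isomorphism on all homotopy groups of the derived mapping spaces; this is the genuine content behind the claim that $F\to T_nF$ is an $S_n$-local equivalence, and you are right to single it out. The one small caveat you already flag — that $T_nF$ need not be objectwise fibrant, so the actual $S_n$-fibrant replacement is $R\circ T_n$ with $R$ objectwise fibrant replacement — is harmless for the characterisation of weak equivalences, since $R$ is an objectwise weak equivalence.
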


We construct the $n$-homogeneous model structure from the $n$-polynomial model
structure using a right Bousfield localisation.

\begin{definition}
We define $n \homog \jcal_0 \Top$, the
\textbf{$n$-homogeneous model structure} on $\jcal_0 \Top$
to be the right Bousfield localisation of $n \poly \jcal_0 \Top$
at the set of objects
\[
K_n = \{ \jcal_n(V,-) \ | \ V \in \skel \lcal
\}.
\]
\end{definition}

The following result summarises \cite[Proposition 6.9]{barnesoman13}.
The weak equivalences of the $n$-homogeneous model structure 
may also be described as those maps $f$ such that $D_n F$ is an objectwise weak equivalence.

\begin{proposition}\label{prop:nhomogmodel}
The cofibrant-fibrant objects of the $n$-homogeneous model structure
$n \homog \jcal_0 \Top$
are the $n$-homogeneous objects of
$\jcal_0 \Top$ which are cofibrant in the objectwise model structure.
The fibrations are the same as in the $n$-polynomial model structure.
The weak equivalences are those maps $f$ such that $\res_0^n \ind_0^n T_n f$
is an objectwise weak homotopy equivalence.
This model structure is right proper.
\end{proposition}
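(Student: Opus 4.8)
The plan is to realise $n \homog \jcal_0 \Top$ as the right Bousfield localisation $R_{K_n}(n \poly \jcal_0 \Top)$, read off most of the statement from the general localisation theorem, and concentrate the real work on two points: the explicit description of the weak equivalences and the characterisation of the cofibrant-fibrant objects. First I would check the hypotheses of the right localisation theorem for $\mcal = n \poly \jcal_0 \Top$: it is cellular and proper, hence right proper, and it is topological, being a left Bousfield localisation of the (topological) objectwise model structure on $\jcal_0 \Top$. One also needs $K_n$ to consist of cofibrant objects of $\mcal$; as $\mcal$ has the objectwise cofibrations, this amounts to each $\jcal_n(V,-)$ being objectwise cofibrant, which holds by the standard cell structure on $\jcal_n(V,-)$ built from the representables $\jcal_0(W,-)$ via the sphere bundles $S\gamma_n$. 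The theorem then yields immediately that $R_{K_n}(n \poly \jcal_0 \Top)$ exists, is right proper and topological, has the same fibrations as $n \poly \jcal_0 \Top$, has the $K_n$-equivalences as weak equivalences, and has the $K_n$-cofibrant objects as cofibrant objects; this already gives the assertions on fibrations and on right properness.

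To describe the weak equivalences I would unwind the notion of $K_n$-equivalence: $f \co X \to Y$ is one exactly when $\rr \hom_\mcal(\jcal_n(V,-), f)$ is a weak homotopy equivalence of spaces for each $V \in \skel \lcal$. Because $\jcal_n(V,-)$ is cofibrant and the natural transformation $\id \to T_n$ is a fibrant replacement in $n \poly \jcal_0 \Top$, this derived mapping space is weakly equivalent to $\hom_{\jcal_0 \Top}(\jcal_n(V,-), T_n X)$; in particular it depends only on $T_n X$, so $f$ is a $K_n$-equivalence iff $T_n f$ is. The key input is then the identification of $\hom_{\jcal_0 \Top}(\jcal_n(V,-), G)$, for $n$-polynomial $G$, with the value at $V$ of $\res_0^n \ind_0^n G$, which comes from the enriched adjunction relating $\res_0^n$ and $\ind_0^n$, the enriched Yoneda lemma, and Weiss's differentiation calculus. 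Combining the two steps, $f$ is a weak equivalence of $n \homog \jcal_0 \Top$ if and only if $\res_0^n \ind_0^n T_n f$ is an objectwise weak homotopy equivalence; this dovetails with the alternative description through $D_n$, since both conditions only see the $n$-th derivative of $T_n f$.

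For the cofibrant-fibrant objects: being fibrant means being fibrant in $n \poly \jcal_0 \Top$, i.e. $n$-polynomial --- objectwise fibrancy being automatic since every space is Serre fibrant --- and being cofibrant means being $K_n$-cofibrant, which in particular forces objectwise cofibrancy; so the task is to match "$K_n$-cofibrant and $n$-polynomial" with "$n$-homogeneous and objectwise cofibrant". A $K_n$-cofibrant $F$ receives a weak equivalence in $n \poly \jcal_0 \Top$ from a $K_n$-cell object built from the $\jcal_n(V,-)$ by homotopy colimits; since $T_n \jcal_n(V,-)$ is $n$-homogeneous by Weiss, and, via Weiss's classification of $n$-homogeneous functors by $O(n)$-spectra, the class of $n$-homogeneous functors is closed under the homotopy colimits involved, an $n$-polynomial $K_n$-cofibrant $F$ is $n$-homogeneous. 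Conversely, an $n$-homogeneous objectwise-cofibrant $F$ is, by Weiss's classification, presented through an $O(n)$-spectrum; assembling that spectrum cellularly from the equivariant sphere produces a $K_n$-cell object $\tilde F$ and a map $\tilde F \to F$ inducing an equivalence of $n$-th derivatives, which --- the two functors being $n$-homogeneous --- must be an objectwise equivalence, so $F$ is $K_n$-cofibrant.

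I expect the principal obstacle to be the identification $\hom_{\jcal_0 \Top}(\jcal_n(V,-), G) \simeq (\res_0^n \ind_0^n G)(V)$ for $n$-polynomial $G$ used in the second paragraph, together with its derived refinement and the compatibility of the filtered homotopy colimit $T_n$ with mapping out of $\jcal_n(V,-)$: this is where the model structures on $\jcal_n \Top$, Weiss's differentiation calculus, and the attendant (twisted Spanier--Whitehead) duality of the $\jcal_n$-morphism objects genuinely enter. Once that identification, and the closure of $n$-homogeneous functors under the relevant homotopy colimits, are available, the remaining verifications --- the localisation machinery and the cofibrant-fibrant description --- are comparatively formal, which is why the result can be recorded by citing \cite[Proposition 6.9]{barnesoman13}.
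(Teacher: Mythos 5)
Your overall strategy matches the route of \cite[Proposition 6.9]{barnesoman13}, which the paper simply cites: realise $n \homog \jcal_0 \Top$ as $R_{K_n}(n \poly \jcal_0 \Top)$, read off right properness and the fibrations from the general localisation theorem, and unwind what $K_n$-equivalence and $K_n$-colocality mean. One quibble on emphasis: the identification $\rr\hom_{\jcal_0\Top}(\jcal_n(V,-), G) \simeq (\res_0^n\ind_0^n G)(V)$ that you flag as ``the principal obstacle'' is not a deep input requiring Spanier--Whitehead duality or differentiation calculus; it is precisely the definition $(\ind_0^n F)(V) = \nat_{\jcal_0 \Top}(\res_0^n \jcal_n(V,-), F)$ in the paper, so the only content in that step is that each $\jcal_n(V,-)$ is cofibrant and that $\id\to T_n$ is fibrant replacement.

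The genuinely imprecise step is the forward half of the cofibrant-fibrant characterisation. You argue that a $K_n$-cofibrant, $n$-polynomial $F$ is $n$-homogeneous because $F$ is objectwise equivalent to a $K_n$-cell object built from the $\jcal_n(V,-)$, and ``the class of $n$-homogeneous functors is closed under the homotopy colimits involved.'' That closure claim is doubtful as stated: $n$-homogeneous functors in $\jcal_0\Top$ are not closed under the homotopy pushouts used in cell attachment (they need not even stay $n$-polynomial), and $T_n$, $T_{n-1}$ do not commute with general pushouts, so $T_n$ of a $K_n$-cell complex being $n$-homogeneous is not formal from $T_n\jcal_n(V,-)$ being $n$-homogeneous. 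The cleaner route uses the fibration sequence $D_nF\to F\to T_{n-1}F$: since $\ind_0^n T_{n-1}F$ is objectwise contractible, the map $D_nF\to F$ is a $K_n$-equivalence; once one knows that objectwise-cofibrant $n$-homogeneous functors are $K_n$-colocal (this is where Weiss's classification and the explicit formula $V\mapsto\Omega^\infty((\Theta\smashprod S^{\rr^n\otimes V})_{hO(n)})$ do enter), a $K_n$-equivalence between $K_n$-colocal objects is an objectwise weak equivalence, so $F\simeq D_nF$ is $n$-homogeneous. This avoids the problematic closure claim and gives both directions simultaneously.
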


To summarise, we have a diagram of model structures and Quillen adjunctions
\[
\xymatrix@C+1cm{
n \homog \jcal_0 \Top \ar@<+1ex>[r]^-{\id} &
\ar@<+1ex>[l]^-{\id}
n \poly \jcal_0 \Top  \ar@<+1ex>[r]^-{\id} &
\ar@<+1ex>[l]^-{\id}
(n-1) \poly \jcal_0 \Top
}
\]
whose homotopy categories and derived functors are
\[
\xymatrix@C+1cm{
\txt{$n$-homogeneous\\functors}
\ar@<+1ex>[r]^-{\inc} &
\ar@<+1ex>[l]^-{D_n}
\txt{$n$-poynomial\\functors}  \ar@<+1ex>[r]^-{T_{n-1}} &
\ar@<+1ex>[l]^-{\inc}
\txt{$(n-1)$-poynomial\\functors.}
}
\]

\section{Rational polynomial functors}\label{sec:ratpoly}
In this section we construct a category of rational $n$-polynomial
functors, that is, a model category whose fibrant objects are both
(objectwise) rational and $n$-polynomial. We begin by recapping
the construction of a model category of rational spaces.

\subsection{Rational spaces}

\begin{proposition}
There is a \textbf{rational model structure}
on based topological spaces where the weak equivalences are those
maps which induce isomorphisms on (reduced) rational homology.
The fibrant objects are the $\h \qq$-local spaces. The cofibrations
are as for the Serre model structure.
This model structure is cellular and left proper, with generating acyclic
cofibrations denoted by $J_{\qq \Top}$.
We denote this model structure $L_{\h \qq} \Top$.
\end{proposition}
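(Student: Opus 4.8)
The plan is to obtain $L_{\h \qq} \Top$ as a left Bousfield localisation of the Serre model structure on based topological spaces. That model structure is cellular, proper and topological --- it is one of our standing examples --- so once we exhibit a suitable \emph{set} of cofibrations $S$ at which to localise, the left localisation theorem immediately produces a cellular, left proper, topological model structure $L_S \Top$ with the same cofibrations as the Serre structure. Being cellular it is in particular cofibrantly generated, so it comes equipped with a set of generating acyclic cofibrations, which we christen $J_{\qq \Top}$. All that then remains is to choose $S$ so that its local objects are the $\h \qq$-local spaces and its weak equivalences are exactly the rational homology isomorphisms.

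For the localising set I would take $S$ to be a set of representatives for the isomorphism classes of inclusions $A \hookrightarrow B$ of based CW-complexes that induce isomorphisms on reduced rational homology and that are built from at most $\kappa$ cells, where $\kappa$ is a fixed regular cardinal chosen large enough by the usual accessibility bound. Each such map is a cofibration in the Serre structure, so the hypotheses of the localisation theorem are met. An $\h \qq$-local space is visibly $S$-local, since every map in $S$ is a rational homology isomorphism. The content, and the main obstacle, is the converse: that $S$-locality already forces $\h \qq$-locality, i.e.\ that being $\h \qq$-local is detected by this one set of maps rather than by the entire proper class of rational homology isomorphisms. This is precisely the cardinality argument of Bousfield~\cite{bous75} (a space is local for all rational homology isomorphisms if and only if it is local for those between complexes of bounded size), and invoking it identifies the $S$-fibrant objects --- the $S$-local, Serre fibrant spaces --- with the $\h \qq$-local spaces.

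Finally I would pin down the weak equivalences. By definition the $S$-equivalences are the maps $f \co X \to Y$ for which $f^* \co \rr \hom(Y,Z) \to \rr \hom(X,Z)$ is a weak homotopy equivalence for every $S$-fibrant $Z$, and by the previous paragraph these are the $\h \qq$-local spaces. Using the functorial $\h \qq$-localisation $X \to X_{\h \qq}$ of Bousfield one checks in the standard way that such an $f$ is an $S$-equivalence precisely when it is a rational homology isomorphism: a rational homology isomorphism induces a weak equivalence on $\rr \hom(-,Z)$ into any $\h \qq$-local $Z$ by factoring through the localisations, and conversely an $S$-equivalence induces a weak equivalence $X_{\h \qq} \to Y_{\h \qq}$, hence an isomorphism on rational homology, hence $f$ itself is a rational homology isomorphism. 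With the fibrant objects and weak equivalences thus identified, cellularity, left properness and the generating acyclic cofibrations $J_{\qq \Top}$ are all supplied by the localisation theorem, completing the construction of $L_{\h \qq} \Top$.
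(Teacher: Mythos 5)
Your proposal is correct and follows essentially the same strategy as the paper: construct $L_{\h\qq}\Top$ as a left Bousfield localisation of the Serre model structure, with cellularity and left properness inherited from Hirschhorn's localisation theorem, and with the cardinality/accessibility argument doing the real work of replacing the proper class of rational homology isomorphisms by a set. The paper compresses this by citing Bousfield \cite{bous75} for the model structure, \cite[Theorem 4.1.1]{hir03} for cellularity, and \cite[Example E.4]{far96} for a single map $f$ with $f$-local $=$ $\h\qq$-local, whereas you build the localising set directly from cardinality-bounded CW-inclusions; this is the same idea, just with a set rather than one wedge of maps, and your write-up is a useful unpacking of what those citations are doing.
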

\begin{proof}
Such a model structure exists by \cite{bous75}.
To see that it is cellular, we use
\cite[Theorem 4.1.1]{hir03} and
\cite[Example E.4]{far96}, which identifies a map $f$
such that being $f$-local is equivalent to being $\h\qq$-local.
\end{proof}

\begin{rmk}
A simply connected space that is $\h \qq$-local
is usually called a rational space.
In this paper we do not make any assumption on the connectivity of our spaces.

Working with $\h \qq$-local spaces (as opposed to rational spaces)
has two main advantages.
Firstly we can consider non-nilpotent (and non-simply connected) spaces
such as $BO(n)$ without having to use the category of spaces
over some classifying space $BK$ as in \cite{RW15}.
Secondly, the existence of well-behaved model structures allows us to phrase the
classification of rational $n$-homogeneous functors in terms of Quillen equivalences
and make use of the existing work
on model categories for orthogonal calculus.
We note Walter \cite{walter06} takes the other approach and
studies homotopy functor calculus of rational spaces in the setting of Goodwillie calculus.
\end{rmk}

We extend this model structure on spaces to $\jcal_0 \Top$
in the expected manner.

\begin{proposition}
There is an \textbf{objectwise rational model structure}
on $\jcal_0 \Top$, denoted $\jcal_0 \Top_\qq$.
The fibrations are the those maps which are objectwise
fibrations of $L_{\h \qq} \Top$.
The fibrant objects are the objectwise $\h \qq$-local spaces
and the weak equivalences are those maps which
induce objectwise weak equivalences on rational homology.
The cofibrations are as for the objectwise model structure on
$\jcal_0 \Top$.
This model structure is a left Bousfield localisation
of the objectwise model structure.
\end{proposition}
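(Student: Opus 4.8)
The plan is to recognise $\jcal_0\Top_\qq$ as the objectwise (projective) model structure on the category $\fun(\jcal_0, L_{\h\qq}\Top)$ of $\Top$-enriched functors with values in $L_{\h\qq}\Top$, and then to realise that structure as a left Bousfield localisation of the objectwise model structure on $\jcal_0\Top$. Since $L_{\h\qq}\Top$ is a cellular topological model category, the argument of Proposition \ref{prop:objecwise} --- that is, \cite[Theorem 6.5]{mmss01} --- applies with $\Top$ replaced by $L_{\h\qq}\Top$ and produces a cofibrantly generated topological model structure on $\fun(\jcal_0, L_{\h\qq}\Top)$ whose fibrations and weak equivalences are those maps that are, objectwise, fibrations respectively weak equivalences of $L_{\h\qq}\Top$. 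Because $L_{\h\qq}\Top$ and $\Top$ have the same generating cofibrations, the generating cofibrations of this structure are precisely the set $I_l$ of Proposition \ref{prop:objecwise}, so its cofibrations are those of the objectwise model structure on $\jcal_0\Top$, while its generating acyclic cofibrations are $\{\jcal_0(V,-)\smashprod j \mid V\in\skel\lcal,\ j\in J_{\qq\Top}\}$. By construction its fibrant objects are the objectwise $\h\qq$-local functors, its fibrations are the maps that are objectwise fibrations of $L_{\h\qq}\Top$, and its weak equivalences are the maps inducing objectwise isomorphisms on rational homology. This establishes every assertion except the last.

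For the last assertion, recall (as in the proof of the rational model structure on spaces, via \cite[Theorem 4.1.1]{hir03} and \cite[Example E.4]{far96}) that $L_{\h\qq}\Top$ is the left Bousfield localisation of $\Top$ at a set $C_\qq$ of maps with the property that a space is $C_\qq$-local if and only if it is $\h\qq$-local; after cofibrant approximation we may assume the maps in $C_\qq$ are cofibrations between cofibrant spaces. Set
\[
S_\qq = \{\, \jcal_0(V,-)\smashprod c \ \mid\ V\in\skel\lcal,\ c\in C_\qq \,\}.
\]
Each element of $S_\qq$ is a cofibration of the cellular, left proper, topological model category $\jcal_0\Top$, so the left Bousfield localisation theorem yields a model category $L_{S_\qq}\jcal_0\Top$ with the same cofibrations as $\jcal_0\Top$. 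The enriched Yoneda lemma gives natural weak equivalences $\rr\hom_{\jcal_0\Top}(\jcal_0(V,-)\smashprod c, Z)\simeq \rr\hom_{\Top}(c, Z(V))$, so $Z$ is $S_\qq$-local --- equivalently $S_\qq$-fibrant, as every functor is objectwise Serre fibrant --- precisely when each $Z(V)$ is $C_\qq$-local, that is, $\h\qq$-local. Hence $L_{S_\qq}\jcal_0\Top$ and the objectwise rational model structure have the same cofibrations and the same fibrant objects.

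It then remains to see that the two structures have the same weak equivalences. For one inclusion, an objectwise $\h\qq$-equivalence may be replaced by an objectwise $\h\qq$-equivalence between cofibrant functors; these are also cofibrant in $\jcal_0\Top$, and mapping them into any objectwise $\h\qq$-local (hence $S_\qq$-fibrant) functor yields a weak equivalence of mapping spaces, so the original map is an $S_\qq$-equivalence. For the reverse inclusion, I would test an $S_\qq$-equivalence $f$ against the functors $G_{B,W}$ given by $G_{B,W}(V)=B^{\jcal_0(V,W)}$, where $B$ is an $\h\qq$-local space and $W\in\skel\lcal$: each $G_{B,W}$ is objectwise $\h\qq$-local, hence $S_\qq$-fibrant, and the Quillen adjunction between evaluation at $W$ and $B\mapsto G_{B,W}$ identifies $\rr\hom_{\jcal_0\Top}(f, G_{B,W})$ with $\rr\hom_{\Top}(f(W), B)$; since the latter is a weak equivalence for all $\h\qq$-local $B$, the map $f(W)$ is an $\h\qq$-equivalence, and this holds for every $W$. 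Thus $\jcal_0\Top_\qq = L_{S_\qq}\jcal_0\Top$, a left Bousfield localisation of the objectwise model structure.

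The existence of the projective model structure and the identification of its (co)fibrations and fibrant objects are routine, running in parallel with Proposition \ref{prop:objecwise}. The step requiring care, and the one I expect to be the main obstacle, is the identification of $L_{S_\qq}\jcal_0\Top$ with the objectwise rational structure: one must check that the localising maps $C_\qq$ on spaces can be taken to be cofibrations between cofibrant objects --- so that the $\jcal_0(V,-)\smashprod c$ are genuinely cofibrations of $\jcal_0\Top$ --- and one must use that mapping spaces into fibrant objects detect weak equivalences, so that agreement of cofibrations and of fibrant objects forces agreement of weak equivalences.
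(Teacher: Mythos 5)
Your proof is correct and takes essentially the same route as the paper: invoke \cite[Theorem 6.5]{mmss01} to build the projective model structure on $\fun(\jcal_0, L_{\h\qq}\Top)$ with generating sets $I_l$ and $\jcal_0 \smashprod J_{\qq\Top}$, then identify it as the left Bousfield localisation of the objectwise structure. The paper simply asserts the localisation identification is ``clear''; you supply the Yoneda-style verification that cofibrations, fibrant objects and weak equivalences agree, which is exactly the content the paper leaves implicit.
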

\begin{proof}
We let the generating cofibrations be the set $I_l$ from the objectwise model structure of Proposition \ref{prop:objecwise}
and the generating acyclic cofibrations be the set of morphisms
\[
\jcal_0 \smashprod J_{\qq \Top} =
\{
\jcal_0(V,-) \smashprod j \ | \ j \in J_{\qq \Top} \ V \in \skel \lcal
\}.
\]
These sets define a cellular and left proper model structure by the same argument as for
the objectwise model structure. The statements about the fibrant objects and
weak equivalences are immediate.

It is clear that this model structure is
the localisation of the objectwise model structure at the set
of morphisms $\jcal_0 \smashprod J_{\qq \Top}$.
\end{proof}

\subsection{Double localisations}

We will construct a model category of rational $n$-polynomial
functors as a double localisation of the objectwise model structure
on $\jcal_0 \Top$. One localisation is at the set $S_n$, which makes the fibrant
objects $n$-polynomial. The other is localisation at the set
$\jcal_0 \smashprod J_{\qq \Top}$, which makes
the fibrant objects $\h \qq$-local.

We give a couple of lemmas examining the structure
of double localisations. The first is essentially the language of
Bousfield lattices adapted to more general localisations.
In the following we let $\fibrep$ denote fibrant replacement in $\ccal$,
$\fibrep_S$ denote fibrant replacement in $L_S \ccal$ and
$\fibrep_T$ denote fibrant replacement in $L_T \ccal$.

\begin{lemma}\label{lem:doublelocal}
Let $\ccal$ be a left proper and cellular topological model
category and $S$ and $T$ be sets of maps in $\ccal$.
Then we have equalities of model structures
\[
L_S (L_T \ccal) = L_{S \cup T} \ccal = L_T (L_S \ccal).
\]
In each case the class of fibrant objects is the class of
fibrant objects of $\ccal$ which are both $S$-local and $T$-local.
The weak equivalences are the $S \cup T$-equivalences
and include both the $S$-equivalences and the $T$-equivalences.
\end{lemma}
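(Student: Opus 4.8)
The plan is to show that every model structure in sight exists, that they all have the cofibrations of $\ccal$, and that each is determined by its class of fibrant objects; identifying those classes then finishes the proof. Existence is immediate: by the left localisation theorem $L_T \ccal$ and $L_S \ccal$ are again cellular, left proper and topological, so $L_S(L_T \ccal)$ and $L_T(L_S \ccal)$ are defined, while $L_{S \cup T} \ccal$ is defined since $S \cup T$ is a set of maps of $\ccal$. A left Bousfield localisation has the same cofibrations as the category it localises, and this persists under iteration, so $L_S(L_T\ccal)$, $L_T(L_S\ccal)$ and $L_{S\cup T}\ccal$ all have exactly the cofibrations of $\ccal$. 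Since a model structure is determined by its cofibrations together with its weak equivalences, it suffices to show these three have the same weak equivalences, and along the way we read off the fibrant objects.

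The first step is to recall that in a left Bousfield localisation $L_U \mcal$ the weak equivalences are precisely the $U$-equivalences; thus $f$ is a weak equivalence of $L_U \mcal$ if and only if $f^* \co \rr\hom_\mcal(Y,Z) \lra \rr\hom_\mcal(X,Z)$ is a weak equivalence of spaces for every $U$-fibrant $Z$. So the weak equivalences of $L_U \mcal$ depend only on the class of $U$-fibrant objects and on $\rr\hom_\mcal(-,Z)$ evaluated at those $Z$. The second step identifies these data for each of our localisations. An object is fibrant in $L_S(L_T\ccal)$ exactly when it is fibrant in $L_T\ccal$ — equivalently, fibrant in $\ccal$ and $T$-local — and moreover $S$-local in $L_T\ccal$. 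For such a $Z$ no fibrant replacement is needed in either $\ccal$ or $L_T\ccal$, and cofibrant replacements coincide (the cofibrations, hence the trivial fibrations, agree), so $\rr\hom_{L_T\ccal}(-,Z)$ and $\rr\hom_\ccal(-,Z)$ agree; consequently $S$-locality in $L_T\ccal$ is the same condition as $S$-locality in $\ccal$. Hence the fibrant objects of $L_S(L_T\ccal)$ are exactly the objects of $\ccal$ that are fibrant and both $S$-local and $T$-local; symmetrically this is also the class of fibrant objects of $L_T(L_S\ccal)$, and, since being $(S\cup T)$-local means being both $S$-local and $T$-local, it is the class of fibrant objects of $L_{S\cup T}\ccal$ as well.

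To conclude, by the first step the weak equivalences of $L_S(L_T\ccal)$, of $L_T(L_S\ccal)$ and of $L_{S\cup T}\ccal$ are in each case the maps $f$ for which $\rr\hom_\ccal(f,Z)$ is a weak equivalence of spaces for all $Z$ in this common class of fibrant objects — using once more that the derived mapping spaces computed in $\ccal$, $L_S\ccal$ and $L_T\ccal$ all agree on such $Z$. Thus the three model structures have the same cofibrations and the same weak equivalences, hence are equal, and the stated description of the fibrant objects and of the weak equivalences follows, the latter including both the $S$-equivalences and the $T$-equivalences since every $S$-local or $T$-local fibrant object lies in the common class above. A more formal variant uses the universal property of left localisation: the identity $\ccal \to L_S(L_T\ccal)$ is left Quillen and sends $S \cup T$ to weak equivalences, giving a left Quillen identity $L_{S\cup T}\ccal \to L_S(L_T\ccal)$; conversely the identity $L_T\ccal \to L_{S\cup T}\ccal$ is left Quillen and inverts $S$, giving a left Quillen identity $L_S(L_T\ccal) \to L_{S\cup T}\ccal$, and identity left Quillen functors in both directions force the model structures to coincide. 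The only real work in either approach is the bookkeeping of the second step: checking that localising at $T$ does not disturb the derived mapping spaces into objects that are already $T$-fibrant, so that ``$S$-local in $L_T\ccal$'' and ``$S$-local in $\ccal$'' coincide on that class.
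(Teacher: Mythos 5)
Your proof is correct and follows essentially the same route as the paper's: reduce to comparing fibrant objects, use the fact that $\rr\hom_{L_T\ccal}(-,Z)=\rr\hom_\ccal(-,Z)$ when $Z$ is already $T$-fibrant to identify ``$S$-local in $L_T\ccal$'' with ``$S$-local in $\ccal$'', then conclude equality of model structures from equality of cofibrations and weak equivalences. The concluding paragraph via the universal property of left localisation is a pleasant alternative, but the core bookkeeping is the same as in the paper.
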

\begin{proof}
The model structures exist and are left proper and cellular by
\cite[Theorem 4.1.1]{hir03}.
The fibrant objects in $L_S (L_T \ccal)$ are precisely the fibrant objects
of $L_T \ccal$ that are $S$-local in $L_T \ccal$. That is,
$C$ is fibrant in $L_S (L_T \ccal)$ if and only if it is fibrant in $\ccal$,
$T$-local and for any map $s \co A \to B$ in $S$, the induced map
\begin{equation}
s^* \co \rr \hom_{L_T \ccal}(B,C) \lra \rr \hom_{L_T \ccal}(A,C) \tag{1}
\end{equation}
is a weak homotopy equivalence of spaces.
For any $X$ and $Y$, we have
\[
\rr \hom_{L_T \ccal}(X,Y)  : = \hom_{\ccal}(\cofrep X,\fibrep_T Y)
= \rr \hom_{\ccal}(X,\fibrep_T Y).
\]
Since $C$ is $T$-local, $C \simeq \fibrep_T C$, hence
the condition (1) is exactly the statement that
$C$ is $S$-local in $\ccal$.

By symmetry, the fibrant objects of $L_S (L_T \ccal)$ are exactly the fibrant
objects of $L_{S \cup T} \ccal$. It follows that these two model categories
have the same weak equivalences (and the same cofibrations as $\ccal$),
so they are equal.

The weak equivalences are the $S \cup T$-local equivalences by definition.
Any $S$-local equivalence is a weak equivalence in $L_T(L_S \ccal)$ as
this is a Bousfield localisation. Similarly every $T$-local equivalence
is a weak equivalence in $L_T(L_S \ccal)$.
\end{proof}

It is important to note that we do not claim
that the fibrant replacement functors $\fibrep_S$ and $\fibrep_T$ commute.
In general, this will be false (consider localising spectra at $\h \qq$ and
$M \zz/p$). We will add an assumption about the interaction of the
fibrant replacement functors. This assumption will hold in the case of rational
$n$-polynomial functors.

\begin{lemma}\label{lem:doublefibrant}
Let $\ccal$ be a left proper and cellular model
category and $S$ and $T$ be sets of cofibrations in $\ccal$.
Assume that $\fibrep_T$ preserves $S$-local objects of $\ccal$.
Then fibrant replacement in $L_{S \cup T} \ccal$ can be taken to be the composite
natural transformation
\[\xymatrix@C+0.6cm{
\id \ar[r]^-{\nu_S} & \fibrep_S
\ar[r]^-{\nu_T \fibrep_S} & \fibrep_T \circ \fibrep_S.
}\]
The weak equivalences are characterised as $T$-equivalences between
$S$-localisations. Moreover, $f$ is an $S \cup T$-local equivalence
if and only if $\fibrep_S f$ is a $T$-local equivalence.
\end{lemma}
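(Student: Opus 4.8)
The plan is to verify that the composite $\id \xrightarrow{\nu_S} \fibrep_S \xrightarrow{\nu_T \fibrep_S} \fibrep_T \fibrep_S$ is a fibrant replacement in $L_{S \cup T}\ccal$, using the characterisation of the fibrant objects and weak equivalences from Lemma \ref{lem:doublelocal}. First I would check that $\fibrep_T \fibrep_S X$ is $(S \cup T)$-fibrant, i.e. fibrant in $\ccal$, $S$-local, and $T$-local. It is fibrant in $\ccal$ and $T$-local because $\fibrep_T$ lands in $T$-fibrant objects of $\ccal$. It is $S$-local precisely by the standing hypothesis that $\fibrep_T$ preserves $S$-local objects (applied to the $S$-local object $\fibrep_S X$). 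So the target is $(S \cup T)$-fibrant.

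Next I would show each of the two maps in the composite is an $(S \cup T)$-local equivalence; since these equivalences are closed under composition (two-out-of-three), this gives that $X \to \fibrep_T \fibrep_S X$ is one. The map $\nu_S \co X \to \fibrep_S X$ is an $S$-local equivalence, hence an $(S \cup T)$-local equivalence by the last sentence of Lemma \ref{lem:doublelocal}. For $\nu_T \fibrep_S \co \fibrep_S X \to \fibrep_T \fibrep_S X$: this is a $T$-local equivalence (it is an instance of $\nu_T$), hence again an $(S \cup T)$-local equivalence. Combining, the composite is an $(S \cup T)$-local equivalence with $(S \cup T)$-fibrant target, so it is a fibrant replacement. (Strictly one should also factor it as an acyclic cofibration followed by a fibration to get a \emph{functorial} fibrant replacement in the technical sense; but the small object argument in the cellular model category $L_{S \cup T}\ccal$ does this, and the resulting functor is naturally weakly equivalent to $\fibrep_T \fibrep_S$, which is all that is needed.)

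For the characterisation of weak equivalences, I would argue: a map $f \co X \to Y$ is an $(S \cup T)$-local equivalence iff $\rr\hom_{L_{S\cup T}\ccal}(Y,Z) \to \rr\hom_{L_{S\cup T}\ccal}(X,Z)$ is a weak equivalence for all $(S \cup T)$-fibrant $Z$. Since fibrant replacement there is $\fibrep_T \fibrep_S$, and since the $(S \cup T)$-fibrant objects are exactly the $S$-fibrant objects that happen to be $T$-local, one reduces to testing against $T$-fibrant objects $Z$ that are $S$-local, i.e. to asking that $\fibrep_S f$ be a $T$-local equivalence (here one uses $\rr\hom_\ccal(X, Z) \simeq \rr\hom_\ccal(\fibrep_S X, Z)$ when $Z$ is $S$-local, together with the fact that $T$-local equivalences between $S$-local objects are detected by maps into objects that are both). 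Running this equivalence of conditions in both directions gives ``$f$ is an $(S \cup T)$-local equivalence iff $\fibrep_S f$ is a $T$-local equivalence'', and hence the description of the weak equivalences as $T$-equivalences between $S$-localisations.

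The main obstacle is the bookkeeping in the last paragraph: carefully relating $\rr\hom$ computed in $\ccal$, in $L_S\ccal$, and in $L_T\ccal$, and making sure that restricting the class of test objects (from all $(S\cup T)$-fibrant objects down to $S$-localised $T$-fibrant objects) neither loses nor gains equivalences. Concretely one wants the clean statement that for $S$-local $Z$, a map $g$ of $S$-local objects is a $T$-local equivalence iff $g^* \co \rr\hom_\ccal(-,Z)$ behaves correctly, which is where the hypothesis that $\fibrep_T$ preserves $S$-local objects is used a second time (so that $\fibrep_T$ of an $S$-local object is again a legitimate $(S\cup T)$-fibrant test object). Everything else is a formal consequence of Lemma \ref{lem:doublelocal} and two-out-of-three.
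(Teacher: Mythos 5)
Your treatment of the fibrant replacement claim is correct and fills in the detail that the paper dismisses as ``routine.'' One small simplification: you do not need the caveat about re-factoring to get a technically valid fibrant replacement. Since the cofibrations of $L_S\ccal$, $L_T\ccal$, and $L_{S\cup T}\ccal$ all agree with those of $\ccal$, and every $S$-equivalence and $T$-equivalence is an $(S\cup T)$-equivalence, both $\nu_S$ and $\nu_T\fibrep_S$ are already acyclic cofibrations in $L_{S\cup T}\ccal$; their composite is therefore an acyclic cofibration with fibrant target, which is a fibrant replacement in the strongest sense.

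For the characterisation of weak equivalences, you take a genuinely different route from the paper, and it is worth comparing. The paper's argument is a short diagram chase: given an $(S\cup T)$-equivalence $f\co C\to C'$, form the ladder
\[
\xymatrix{
C \ar[r]^{f} \ar[d] & C' \ar[d] \\
\fibrep_S C \ar[r]^{\fibrep_S f} \ar[d] & \fibrep_S C' \ar[d] \\
\fibrep_T \fibrep_S C \ar[r] & \fibrep_T \fibrep_S C'
}
\]
where the vertical maps are $S$-equivalences and $T$-equivalences respectively, hence all $(S\cup T)$-equivalences. By two-out-of-three the bottom map is an $(S\cup T)$-equivalence between $(S\cup T)$-local objects, hence a weak equivalence in $\ccal$; then two-out-of-three in $L_T\ccal$ applied to the lower square shows $\fibrep_S f$ is a $T$-equivalence, and the converse is immediate. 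Your approach instead unwinds the $\rr\hom$-definition of local equivalence and shifts the class of test objects, using the hypothesis on $\fibrep_T$ a second time to ensure those test objects are still $(S\cup T)$-fibrant. This does go through, but the ``bookkeeping'' you correctly identify as the main obstacle is exactly what the paper's two-out-of-three argument sidesteps: once one observes that both fibrant replacement maps are $(S\cup T)$-equivalences and that an $(S\cup T)$-equivalence of $(S\cup T)$-local objects is a weak equivalence in $\ccal$, no further manipulation of mapping spaces is needed. I would recommend the diagram-chase version both for brevity and because it isolates the one place the hypothesis is genuinely required (ensuring the bottom row consists of $(S\cup T)$-local objects).
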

\begin{proof}
The description of fibrant replacements is routine, so we turn to the statement
about weak equivalences.
Let $f \co C \to C'$ be an $S \cup T$-local equivalence. Then we have a
commutative diagram of maps in $\ccal$ as below.
\[
\xymatrix{
C \ar[r]^{f}_{\simeq_{S \cup T}} \ar[d]_{\simeq_S} & C' \ar[d]_{\simeq_S} \\
\fibrep_S C \ar[r]^{\fibrep_S f} \ar[d]_{\simeq_T} & \fibrep_S C' \ar[d]_{\simeq_T} \\
\fibrep_T \fibrep_S C \ar[r]^{\fibrep_T \fibrep_S f}
& \fibrep_T \fibrep_S C'
}
\]
The lower horizontal map is a $S \cup T$-local equivalence between
$S \cup T$-local objects and hence is a weak equivalence in $\ccal$.
It follows that $\fibrep_S f$ is a $T$-local equivalence.
The converse is immediate.
\end{proof}

\subsection{Rational polynomial functors}

\begin{theorem}\label{thm:ratpoly}
There is a \textbf{rational $n$-polynomial model structure}
on $\jcal_0 \Top$, denoted $n \poly \jcal_0 \Top_\qq$.
The fibrant objects are the $n$-polynomial
objects which are $\h \qq$-local.
The cofibrations are as for the objectwise model structure on
$\jcal_0 \Top$.
This model structure is cellular and left proper,
fibrant replacement is given by $T_n^\qq = T_n \circ \fibrep_{\h \qq}$
and the weak equivalences are those maps which induce $T_n$-equivalences
of $\h \qq$-localisations.
\end{theorem}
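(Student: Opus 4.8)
The plan is to define $n \poly \jcal_0 \Top_\qq$ to be the double Bousfield localisation
\[
n \poly \jcal_0 \Top_\qq := L_{S_n \cup (\jcal_0 \smashprod J_{\qq \Top})}\, \jcal_0 \Top
\]
of the objectwise model structure on $\jcal_0 \Top$, and then to read off each assertion from Lemmas~\ref{lem:doublelocal} and~\ref{lem:doublefibrant}. The objectwise model structure is cellular and left proper by Proposition~\ref{prop:objecwise}, and $S_n$ and $\jcal_0 \smashprod J_{\qq \Top}$ are sets of cofibrations (replacing $S_n$ by a weakly equivalent set of cofibrations between cofibrant objects if necessary, which leaves the localisation unchanged). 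Hence Lemma~\ref{lem:doublelocal} produces the model category, shows it is cellular and left proper, identifies its cofibrations with the objectwise cofibrations, and identifies its fibrant objects with the objectwise-fibrant functors that are simultaneously $S_n$-local and $(\jcal_0 \smashprod J_{\qq \Top})$-local. Comparing with the $n$-polynomial model structure $n \poly \jcal_0 \Top$, the first condition says precisely that $F$ is $n$-polynomial; comparing with the objectwise rational model structure $\jcal_0 \Top_\qq$, the second says precisely that $F$ is objectwise $\h \qq$-local. This is the claimed description of the fibrant objects.

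For the fibrant replacement functor and the characterisation of the weak equivalences I would apply Lemma~\ref{lem:doublefibrant} with $S = \jcal_0 \smashprod J_{\qq \Top}$ and $T = S_n$, so that $\fibrep_S$ is objectwise $\h \qq$-localisation $\fibrep_{\h \qq}$ and $\fibrep_T$ is $n$-polynomial replacement $T_n$ (each taken to be the fibrant replacement functor of the corresponding single localisation). Granting its hypothesis, Lemma~\ref{lem:doublefibrant} identifies fibrant replacement in the double localisation with the composite $\id \to \fibrep_{\h \qq} \to T_n \circ \fibrep_{\h \qq} = T_n^\qq$, and shows that a map $f$ is a weak equivalence if and only if $\fibrep_{\h \qq} f$ is an $S_n$-equivalence, i.e.\ if and only if $T_n \fibrep_{\h \qq} f$ is an objectwise weak equivalence --- precisely the statement that $f$ induces $T_n$-equivalences of $\h \qq$-localisations. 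The whole theorem therefore reduces to the single hypothesis of Lemma~\ref{lem:doublefibrant}: that $T_n$ carries objectwise $\h \qq$-local functors to objectwise $\h \qq$-local functors.

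To verify this I would use $T_n F = \hocolim(F \to \tau_n F \to \tau_n^2 F \to \cdots)$ and proceed in two steps. First, if $F$ is objectwise fibrant and objectwise $\h \qq$-local, then so is $\tau_n F$: for each $V, W$ the based space $S\gamma_{n+1}(V,W)_+$ is a CW complex, hence cofibrant, so $\Top(S\gamma_{n+1}(V,W)_+, F(W))$ is a mapping space into an $\h \qq$-local space and is therefore $\h \qq$-local (if $g$ is an $\h \qq$-equivalence then so is $g \smashprod \id$, since rational homology sends smash products to tensor products). The value $(\tau_n F)(V) = \nat(S\gamma_{n+1}(V,-)_+, F)$ is a homotopy end of such spaces, and the class of $\h \qq$-local spaces, being the fibrant objects of $L_{\h \qq} \Top$, is closed under homotopy limits; hence $\tau_n F$ is objectwise $\h \qq$-local, and iterating, so is every $\tau_n^k F$. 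Second, a sequential homotopy colimit of objectwise $\h \qq$-local functors is again objectwise $\h \qq$-local, so $T_n F$ is objectwise $\h \qq$-local. Since $T_n F$ is in addition always $n$-polynomial, $T_n^\qq F = T_n \fibrep_{\h \qq} F$ is a fibrant object of $n \poly \jcal_0 \Top_\qq$, and --- both maps in $F \to \fibrep_{\h \qq} F \to T_n \fibrep_{\h \qq} F$ being $\bigl(S_n \cup (\jcal_0 \smashprod J_{\qq \Top})\bigr)$-equivalences --- the natural map $F \to T_n^\qq F$ is a fibrant replacement.

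The main obstacle is the preservation statement of the previous paragraph, and specifically its second step: the behaviour of objectwise $\h \qq$-localisation under the sequential homotopy colimit defining $T_n$. The first step (closure of $\h \qq$-local spaces under mapping spaces and homotopy limits) is standard for a left Bousfield localisation, and the two double-localisation lemmas then do all the remaining bookkeeping; the point requiring genuine care --- because no connectivity hypothesis is placed on the values of $F$ --- is that the class of $\h \qq$-local spaces is closed under filtered homotopy colimits. Once that is established, the proof is complete.
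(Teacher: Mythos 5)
Your proof takes essentially the same route as the paper: define $n \poly \jcal_0 \Top_\qq$ as the double Bousfield localisation $L_{S_n} \jcal_0 \Top_\qq$, read off the structure from Lemmas~\ref{lem:doublelocal} and~\ref{lem:doublefibrant}, and reduce everything to showing that $T_n$ preserves objectwise $\h\qq$-locality, which you then handle exactly as the paper does (the $\h\qq$-locality of $\tau_n F$ via the Quillen bifunctor property of the objectwise smash product, then closure of local objects under the sequential homotopy colimit defining $T_n$). The one step you flag as requiring genuine care --- closure of $\h\qq$-local spaces under filtered homotopy colimits in the absence of connectivity hypotheses --- is asserted without further elaboration in the paper's own proof as well, so you have identified the same load-bearing point rather than a divergence in method.
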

\begin{proof}
Recall $J_{\qq \Top}$, the set of generating
acyclic cofibrations for the rational model structure on based spaces.
We extended these to $\jcal_0 \Top$ as the set
\[
\jcal_0 \smashprod J_{\qq \Top}
=
\{
\jcal_0(V,-) \smashprod j \ | \ j \in J_{\qq \Top} \ V \in \skel \lcal
\}.
\]
We define the rational $n$-polynomial model structure as
\[
n \poly \jcal_0 \Top_\qq =
L_{S_n} L_{\jcal_0 \smashprod J_{\qq \Top}} \jcal_0 \Top =
L_{S_n} \jcal_0 \Top_\qq.
\]
To complete the proof, we apply Lemmas \ref{lem:doublelocal} and \ref{lem:doublefibrant}. Thus we must show that
if $F$ is objectwise $\h \qq$-local then so is $T_n F$.
Let $F$ be objectwise $\h \qq$-local. We claim that for any $V \in \jcal_0$, the space
\[
\tau_n F(V) = \nat(S\gamma_n(V,-)_+, F)
\]
is $\h \qq$-local, so that $\tau_n F$ is
objectwise $\h \qq$-local.
That is, we want to show that
\[
\nat(S\gamma_n(V,-)_+, -) \co \jcal_0 \Top_\qq \longrightarrow L_{\h \qq} \Top
\]
is a right Quillen functor. This functor has a left adjoint
$S\gamma_n(V,-)_+ \smashprod (-)$, which is a special case
of the objectwise smash product
\[
(-) \smashprod (-) \co \jcal_0 \Top_\qq \times L_{\h \qq} \Top
\longrightarrow \jcal_0 \Top_\qq.
\]
It is easily checked that this smash product is a Quillen bifunctor
with respect to the $\h \qq$-local model structures on $\jcal_0 \Top$ and $\Top$.
Hence the claim follows.

The functor $T_n F$ is the homotopy colimit over $k$ of
$\tau_n^k F$ and this homotopy colimit is constructed objectwise.
Since homotopy colimits preserve $\h \qq$-local spaces, we see that $T_n F$ is
$\h \qq$-local.
\end{proof}

For more details on the interactions between
localisations and Quillen bifunctors, see \cite{GR14}.

\begin{corollary}
Let $F \in \jcal_0 \Top$. Then the homotopy fibre of
\[
T_n (\fibrep_{\h\qq} F) \longrightarrow T_{n-1} (\fibrep_{\h\qq} F)
\]
is an $n$-homogeneous functor that is objectwise $\h \qq$-local.
We denote this $D_n^\qq F$.
\end{corollary}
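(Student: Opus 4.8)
The plan is to prove the two assertions from separate inputs: that $D_n^\qq F$ is $n$-homogeneous is pure orthogonal calculus, whereas objectwise $\h\qq$-locality rests on the computation carried out in the proof of Theorem \ref{thm:ratpoly}. For the first, observe that $D_n^\qq F$ is by construction the functor $D_n E$ associated to the input $E = \fibrep_{\h\qq} F$, the map in question being the canonical $T_n E \to T_{n-1} E$. As recalled in Section \ref{sec:orthcalc}, for \emph{any} $E \in \jcal_0 \Top$ the homotopy fibre of $T_n E \to T_{n-1} E$ is $n$-homogeneous, since $T_n$ and $T_{n-1}$ commute with finite homotopy limits and $T_{n-1} T_n E \simeq T_{n-1} E$ forces $T_{n-1}(D_n E)$ to be objectwise contractible. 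Specialising to $E = \fibrep_{\h\qq} F$ gives the first claim with nothing further to verify.

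For objectwise $\h\qq$-locality, first note that $\fibrep_{\h\qq} F$ is objectwise $\h\qq$-local, being a fibrant replacement in the objectwise rational model structure $\jcal_0 \Top_\qq$. The proof of Theorem \ref{thm:ratpoly} establishes that $\tau_n$, and hence $T_n = \hocolim_k \tau_n^k$, carries objectwise $\h\qq$-local functors to objectwise $\h\qq$-local functors; applying this for $n$ and for $n-1$ shows that both $T_n(\fibrep_{\h\qq} F)$ and $T_{n-1}(\fibrep_{\h\qq} F)$ are objectwise $\h\qq$-local. It then remains only to check that the homotopy fibre of a map of objectwise $\h\qq$-local functors is again objectwise $\h\qq$-local. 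Since homotopy fibres in $\jcal_0 \Top$ are formed objectwise, this reduces to the statement for based spaces: the homotopy fibre (over the basepoint) of a map $X \to Y$ of $\h\qq$-local spaces is $\h\qq$-local.

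I would settle this last point directly from the definition of $L_{\h\qq}\Top$. The homotopy fibre is the homotopy pullback of $\ast \to Y \leftarrow X$, in which $\ast$ is trivially $\h\qq$-local. For any based space $A$ the functor $\rr\hom_{\Top}(A,-)$ preserves homotopy pullbacks, so $\rr\hom_{\Top}(A, \hofib(X\to Y))$ is the homotopy pullback of $\rr\hom_{\Top}(A,\ast) \to \rr\hom_{\Top}(A,Y) \leftarrow \rr\hom_{\Top}(A,X)$; if $A \to A'$ is one of the maps whose localisation defines $L_{\h\qq}\Top$, then $\h\qq$-locality of the three corner spaces makes each corner map a weak equivalence, and homotopy pullbacks preserve weak equivalences, whence $\hofib(X\to Y)$ is $\h\qq$-local. (More conceptually: the $\h\qq$-local spaces are the fibrant objects of a left Bousfield localisation of $\Top$, and such a class is always closed under homotopy limits.) The only ingredient here that goes beyond Theorem \ref{thm:ratpoly} is this closure of $\h\qq$-local spaces under homotopy fibres, and it is the point I would be most careful to state cleanly; combining it with the $n$-homogeneity of $D_n(\fibrep_{\h\qq}F)$ then yields the corollary.
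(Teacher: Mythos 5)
Your proof is correct and matches the argument the paper leaves implicit (the corollary carries no written proof and is intended to follow directly from Theorem \ref{thm:ratpoly}). The three ingredients you isolate are exactly the right ones: $n$-homogeneity of $D_nE$ for arbitrary $E$ from Section \ref{sec:orthcalc}, preservation of objectwise $\h\qq$-locality by $T_n$ and $T_{n-1}$ from the proof of Theorem \ref{thm:ratpoly}, and closure of $\h\qq$-local spaces under homotopy fibres, which you correctly justify via closure of local objects of a left Bousfield localisation under homotopy limits.
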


We want to show that such functors are classified by rational spectra
with an action of $O(n)$. We do so by proving that there is a Quillen equivalence
between a model structure for rational $n$-homogeneous functors and
a model structure for rational spectra with an action of $O(n)$.

\begin{rmk}
  A formal consequence of definitions is that for $F \in \jcal_0 \Top$, 
  $T_n^\qq F = T_n L_{\h \qq} F$ is the closest
  functor to $F$ that is both $n$-polynomial and objectwise $\h \qq$-local. 
  But it should be noted that for $F \in \jcal_0 \Top$, the functor $\fibrep_{\h\qq} T_n F$ is
  weakly equivalent to $F$ in $n \poly \jcal_0 \Top_\qq$, but it is not necessarily $n$-polynomial.
 \end{rmk}

\section{Rational homogeneous functors}\label{sec:rathomog}

In this section we will left Bousfield
localise the $n$-homogeneous model structure on $\jcal_0 \Top$
to ensure that the fibrant objects are also objectwise $L_{\h \qq}$-local.
This requires some care to ensure both that the $n$-homogeneous model structure
admits a left Bousfield localisation and that a localisation
with the correct properties exists.

In order that we can perform left Bousfield localisations on a model structure we
need to know that it is left proper and cellular.
The following lemma builds upon Proposition \ref{prop:nhomogmodel}
and shows that these properties hold for $n \homog \jcal_0 \Top$.
The key fact is that this model structure is \textbf{stable}:
the (objectwise) suspension is an equivalence on the homotopy
category. This follows from the fact that any homogeneous 
functor has a de-looping, see also \cite[Corollary 10.2]{barnesoman13}.

\begin{lemma}
The pushout of an $n$--homogeneous weak equivalence along
a cofibration (of the objectwise model structure)
is an $n$--homogeneous weak equivalence.
In particular, the $n$--homogeneous model
structure on $\jcal_0 \Top$ is left proper (and hence is proper).
Moreover this model structure is cellular.
\end{lemma}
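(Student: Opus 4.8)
The plan is to exploit stability of the $n$-homogeneous model structure to reduce left properness to a statement that is automatic in stable model categories. First I would recall the standard fact: in a stable model category a pushout square along a cofibration is also a homotopy pushout, and homotopy pushout squares agree with homotopy pullback squares; consequently the pushout of a weak equivalence along a cofibration is a weak equivalence. So the real content is establishing that $n \homog \jcal_0 \Top$ is stable in the appropriate sense, and then checking that the general principle applies to this particular (non-combinatorial, topological, right-Bousfield-localised) setting.

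The key steps, in order, would be: (1) Verify stability. By Proposition \ref{prop:nhomogmodel} the cofibrant-fibrant objects are the objectwise-cofibrant $n$-homogeneous functors, and the weak equivalences are detected by $\res_0^n \ind_0^n T_n$. Since every $n$-homogeneous functor de-loops (by Weiss's classification, or \cite[Corollary 10.2]{barnesoman13}), objectwise suspension $\Sigma$ induces an equivalence on $\Ho(n \homog \jcal_0 \Top)$; this is the meaning of stability quoted in the excerpt. (2) Identify fibre and cofibre sequences. In a pointed model category that is stable, the homotopy cofibre of a map and the (shifted) homotopy fibre agree up to weak equivalence, and a square is a homotopy pushout iff it is a homotopy pullback. (3) Deduce the pushout claim. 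Given a pushout square with $i \co A \to C$ a cofibration of the objectwise model structure and $f \co A \to B$ an $n$-homogeneous weak equivalence, the square is a homotopy pushout (cofibrations of the objectwise structure are still cofibrations here, and one leg of the square is a cofibration), hence a homotopy pullback; since $f$ is a weak equivalence, so is $g \co C \to D$, because homotopy pullbacks preserve weak equivalences in the remaining legs. This is exactly left properness. (4) Conclude properness: combine with right properness from Proposition \ref{prop:nhomogmodel}. (5) Cellularity. Here I would argue that $n \homog \jcal_0 \Top$ is a right Bousfield localisation of the cellular model category $n \poly \jcal_0 \Top$ at a set $K_n$ of cofibrant objects; right Bousfield localisations do not automatically preserve cellularity, but one can use that the underlying objectwise model structure on $\jcal_0 \Top$ is cellular (Proposition \ref{prop:objecwise}), that $n \poly \jcal_0 \Top$ is cellular, and that the $n$-homogeneous structure has the same fibrations and the same (underlying) generating cofibrations can be chosen from the objectwise structure, so the smallness, effective-monomorphism and subcomplex conditions are inherited. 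Alternatively, invoke that a stable, cofibrantly generated, right proper topological model category whose underlying category is locally presentable (or cellular) satisfies the cellularity hypotheses needed for the subsequent left localisation.

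The main obstacle I expect is step (5), cellularity: right Bousfield localisation is the ``wrong-handed'' localisation for preserving the technical cofibration hypotheses (effective monomorphisms, the subcomplex condition on generating cofibrations), precisely the mismatch the introduction flags. The cleanest route is probably to observe that the cofibrations of $n \homog \jcal_0 \Top$ that matter for cellularity are still built from the generating cofibrations $I_l$ of the objectwise structure (the right localisation changes the cofibrant \emph{objects} and the acyclic cofibrations but the class of cofibrations relevant to the small object argument can be taken to be the $K_n$-cell complexes together with $I_l$), so the effective-monomorphism and subcomplex conditions descend from Proposition \ref{prop:objecwise}. I would then remark that the stability established in step (1) is what makes left properness and cellularity enough to legitimately left Bousfield localise in the next section, tying the lemma to its intended use.
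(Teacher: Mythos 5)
Your high-level plan is the right one and matches the paper's, which at this point simply cites \cite[Proposition~5.8 and Theorem~5.9]{barnesroitzheimstable} and notes that only the right localisation $n \homog \jcal_0 \Top$, not $n \poly \jcal_0 \Top$, need be stable. But the explicit deduction in your step (3) has a genuine gap. You assert that the pushout square ``is a homotopy pushout (cofibrations of the objectwise structure are still cofibrations here, and one leg of the square is a cofibration).'' The parenthetical is false: a right Bousfield localisation keeps the fibrations and the acyclic cofibrations but \emph{shrinks} the class of cofibrations, so an objectwise cofibration is a cofibration in $n \poly \jcal_0 \Top$ but is in general not a cofibration in $n \homog \jcal_0 \Top$. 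And even if it were, the inference ``pushout along a cofibration is a homotopy pushout'' requires either cofibrant objects or left properness of the model structure in question --- which is exactly what is being proved, so the reasoning is circular. Nor can one route around this by noting that the square is a homotopy pushout in $n \poly \jcal_0 \Top$ (true, by properness of the objectwise structure): the identity from $n \homog$ to $n \poly$ is \emph{left} Quillen, so homotopy colimits are preserved in the wrong direction, and $n \poly$ itself is not stable, so one cannot convert pushout to pullback there.

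The argument that actually works, and is what Barnes--Roitzheim's proof does, avoids any appeal to left properness of $n \homog$: factor $f \co A \to B$ as a cofibration $A \rightarrowtail B'$ followed by an acyclic fibration $B' \twoheadrightarrow B$ in the objectwise (equivalently, $n$-polynomial) structure, push out along the objectwise cofibration $i \co A \to C$ to obtain $C \rightarrowtail D' \to D$; left properness of $n \poly \jcal_0 \Top$ gives that $D' \to D$ is an objectwise, hence $n$-homogeneous, weak equivalence. Because the square with the cofibration $A \rightarrowtail B'$ is a genuine pushout, the strict cofibres agree: $D'/C \cong B'/A$. Stability of $n \homog \jcal_0 \Top$ is then invoked to say that a map between cofibrations with this common cofibre is an $n$-homogeneous equivalence precisely when that cofibre is $n$-homogeneously trivial; since $A \to B'$ is an $n$-homogeneous equivalence, so is $C \to D'$, and hence $C \to D$. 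Your step (5) on cellularity is similarly under-justified: the cofibrations of a right localisation are genuinely a smaller class, so the subcomplex and effective-monomorphism conditions do not simply ``descend'' from $I_l$; the cited result again uses stability of the localised structure in an essential way, and the paper defers to it rather than re-proving it.
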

\begin{proof}
These statements follow from the proofs of
\cite[Proposition 5.8 and Theorem 5.9]{barnesroitzheimstable}.
In each case we do not need the original category ($n \poly \jcal_0 \Top$) to be stable, only that its right localisation ($n \homog \jcal_0 \Top$) is stable.
\end{proof}

Now we need to specify a set of maps at which to localise
$n \homog \jcal_0 \Top$. A reasonable first guess would be
$\jcal_0 \smashprod J_{\qq \Top}$ as we used this
set to make the model structure of rational $n$-polynomial functors.
But this set would cause us some technical problems as the functor $\jcal_0 (U,-)$ is
(most likely) not cofibrant in $n \homog \jcal_0 \Top$. Instead the
cofibrant objects are built (in a well-defined sense)
from the objects $\jcal_n(U,-)$
see \cite[Definition 5.1.4 and Theorem 5.1.5]{hir03}.
Thus we will left Bousfield localise $n \homog \jcal_0 \Top$ at the set
\[
\jcal_n \smashprod J_{\qq \Top}
=
\{
\jcal_n(V,-) \smashprod j \ | \ j \in J_{\qq \Top} \ V \in \skel \lcal
\}.
\]
If we had chosen the set $\jcal_0 \smashprod J_{\qq \Top}$
then we would need to cofibrantly replace the codomain and
domain in the definition of the localised weak equivalences.
This would make it much harder to understand the fibrant objects.
As it is, we will not see until Proposition \ref{prop:indrational}
that this model structure has all the properties we desire.

\begin{theorem}\label{thm:rathomog}
There is a \textbf{rational $n$-homogenous model structure}
on $\jcal_0 \Top$, denoted $n \homog \jcal_0 \Top_\qq$.
This model structure is cellular and proper.
The fibrant objects are the $n$-homogeneous functors $F$
such that $\ind_0^n F$ is objectwise $\h \qq$-local.
\end{theorem}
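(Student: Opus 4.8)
The plan is to realise $n\homog\jcal_0\Top_\qq$ as a left Bousfield localisation of the $n$-homogeneous model structure and then unravel what the fibrant objects are. The preceding lemma tells us that $n\homog\jcal_0\Top$ is left proper and cellular, so by the left localisation theorem we may form
\[
n\homog\jcal_0\Top_\qq := L_{\jcal_n \smashprod J_{\qq\Top}}\, n\homog\jcal_0\Top.
\]
This immediately gives a cellular, left proper topological model structure with the same cofibrations as $n\homog\jcal_0\Top$; right properness is inherited because $n\homog\jcal_0\Top$ is right proper and the underlying fibrations are unchanged by a left localisation (so it is proper). The cofibrations, fibrations-between-fibrant-objects and generating (acyclic) cofibrations are all as produced by Hirschhorn's theorem, so the only real content is the identification of the fibrant objects, which is where I would spend the effort.

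For the fibrant objects: an object $F$ is fibrant in the localised structure iff it is fibrant in $n\homog\jcal_0\Top$ and local with respect to each map $\jcal_n(V,-)\smashprod j$ for $j\in J_{\qq\Top}$. Fibrancy in $n\homog\jcal_0\Top$ already forces $F$ to be ($n$-polynomial, hence) $n$-homogeneous up to the usual cofibrant-fibrant caveat. The locality condition says that for every $V$ and every $j\colon A\to B$ in $J_{\qq\Top}$ the map
\[
\rr\hom(\jcal_n(V,-)\smashprod B,\ F)\lra \rr\hom(\jcal_n(V,-)\smashprod A,\ F)
\]
is a weak equivalence. Using the tensor–hom adjunction for the objectwise smash product this is $\rr\hom_{\Top}(B,\rr\hom(\jcal_n(V,-),F))\to\rr\hom_{\Top}(A,\rr\hom(\jcal_n(V,-),F))$, i.e.\ the mapping space $\rr\hom_{n\homog}(\jcal_n(V,-),F)$ is $\h\qq$-local for all $V$. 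The key step is to recognise this mapping space: since the $\jcal_n(V,-)$ are exactly the generating cofibrant objects used to build $n\homog\jcal_0\Top$ as a right localisation, the derived mapping space $\rr\hom_{n\homog}(\jcal_n(V,-),F)$ computes the value (up to the $\res_0^n\ind_0^n$-correction) of the associated $O(n)$-spectrum, and more concretely it is modelled by $(\ind_0^n F)$ evaluated appropriately — the content of Proposition \ref{prop:nhomogmodel} and the stable-category comparison of Section \ref{sec:stablecat}. Tracing this through, the locality condition becomes exactly the statement that $\ind_0^n F$ is objectwise $\h\qq$-local.

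The main obstacle is this last identification: making precise that $\rr\hom_{n\homog\jcal_0\Top}(\jcal_n(V,-),F)$ is computed by (a shift/twist of) the value of $\ind_0^n F$, so that $\h\qq$-locality of all these mapping spaces is equivalent to objectwise $\h\qq$-locality of $\ind_0^n F$. I would handle this by invoking the description of cofibrant objects of $n\homog\jcal_0\Top$ as cellular objects built from the $\jcal_n(V,-)$ together with the Quillen adjunction $(\res_0^n\ind_0^n, -)$ and the stable structure, reducing to the behaviour on the generators; since $\h\qq$-localisation of spaces commutes with the relevant homotopy colimits and with $\Omega^\infty$, locality is detected on the generators precisely when $\ind_0^n F$ is objectwise local. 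The remaining bookkeeping — that fibrations and cofibrations are unchanged, and that properness and cellularity pass through — is routine given the localisation theorem and the preceding lemma.
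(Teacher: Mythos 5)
Your overall plan (left-localise $n\homog\jcal_0\Top$ at $\jcal_n\smashprod J_{\qq\Top}$ and read off the fibrant objects) is the same as the paper's, but there is one genuine error in the properness argument and your identification of the fibrant objects takes a somewhat different, locality-based route.

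The error is in the properness step. You write that right properness is ``inherited because $n\homog\jcal_0\Top$ is right proper and the underlying fibrations are unchanged by a left localisation.'' This has the left/right roles reversed: a \emph{left} Bousfield localisation keeps the cofibrations fixed and, in general, \emph{shrinks} the class of fibrations, so right properness is typically destroyed. (The paper flags exactly this danger in the introduction.) The correct argument, which is what the paper actually uses, is that $n\homog\jcal_0\Top$ is a \emph{stable} model category, and one invokes the Barnes--Roitzheim results (\cite[Prop.~5.8, Thm.~5.9]{barnesroitzheimstable}) which show that left Bousfield localisation of a stable, proper, cellular model category at a set of cofibrations between cofibrant objects produces a model structure that is again proper and cellular. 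Without the stability input you cannot conclude right properness, so this gap needs to be filled.

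On the fibrant-object identification, your route is valid and slightly different from the paper's. You characterise the $\jcal_n\smashprod J_{\qq\Top}$-local objects via the tensor--hom adjunction, reducing to the statement that $\rr\hom(\jcal_n(V,-),F)$ is $\h\qq$-local for all $V$, and then identify this derived mapping space with $(\ind_0^n F)(V)$. You hedge here (``up to a shift/twist''), but in fact no correction is needed: since $\jcal_n(V,-)$ is cofibrant in $n\homog\jcal_0\Top$ and $F$ is fibrant, $\rr\hom(\jcal_n(V,-),F)$ is literally $\hom_{\jcal_0\Top}(\jcal_n(V,-),F)=(\ind_0^n F)(V)$ by definition of $\ind_0^n$. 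The paper instead works on the cofibration side: it identifies the generating acyclic cofibrations of the localisation (using that in a topological model category the horns may be replaced by $\jcal_n\smashprod J_{\qq\Top}$ itself) and then reads off the fibrations via the lifting property, arriving at the same characterisation of fibrant objects. Both approaches rest on the same adjunction $\jcal_0\Top(\jcal_n(V,-)\smashprod A, F)\cong\Top(A,(\ind_0^n F)(V))$, so they are close in substance; yours is phrased in terms of locality, the paper's in terms of lifting properties, and the paper's version has the advantage of also pinning down the fibrations of the localised structure explicitly.
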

\begin{proof}
We define the model category $n \homog \jcal_0 \Top_\qq$
to be the left Bousfield localisation of
$n \homog \jcal_0 \Top$ at the set of maps
\[
\jcal_n \smashprod J_{\qq \Top}
=
\{
\jcal_n(V,-) \smashprod j \ | \ j \in J_{\qq \Top} \ V \in \skel \lcal
\}.
\]
This model structure exists as we are in a cellular and proper model category.
Furthermore the resulting model structure is cellular and proper by
\cite[Proposition 5.8 and Theorem 5.9]{barnesroitzheimstable}.

The set $\jcal_n \smashprod J_{\qq \Top}$ consists of cofibrations in
$n \homog \jcal_0 \Top$, since $\jcal_n(U,-)$ is cofibrant and
$J_{\qq \Top}$ is a set of cofibrations in $\Top$.
By \cite[Theorem 4.11]{barnesroitzheimstable} it follows that
the generating acyclic cofibrations of $n \homog \jcal_0 \Top_\qq$
are given by taking the
generating acyclic cofibrations of $n \homog \jcal_0 \Top$ and adding
the set of horns (see \cite[Definition 4.2.1]{hir03}) on $\jcal_n \smashprod J_{\qq \Top}$.
In a topological model category, a horn is a type of pushout, so it 
suffices to use simply $\jcal_n \smashprod J_{\qq \Top}$.
Thus a fibration in $n \homog \jcal_0 \Top_\qq$ is precisely a map
which is a fibration in $n \poly \jcal_0 \Top$ with the right lifting
property with respect to $\jcal_n \smashprod J_{\qq \Top}$.
A map $E \to F$ has the right lifting property
with respect to $\jcal_n \smashprod J_{\qq \Top}$ if and only if
$\ind_0^n E(U) \to \ind_0^n F(U)$ is a rational fibration of based spaces.

So we have shown that the fibrations of
$n \homog \jcal_0 \Top_\qq$ are precisely the fibrations of
$n \homog \jcal_0 \Top$ such that $\ind_0^n f$ is an objectwise
rational fibration.
It follows that the fibrant objects are the $n$-polynomial functors $F$
such that $\ind_0^n F$ is objectwise $\h \qq$-local.
\end{proof}

\begin{lemma}
The identity functor is a left Quillen functor from
$n \homog \jcal_0 \Top_\qq$ to
$n \poly \jcal_0 \Top_\qq$.
\end{lemma}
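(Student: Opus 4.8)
The plan is to verify directly that the identity functor preserves cofibrations and acyclic cofibrations; since its right adjoint is again the identity, this is exactly the assertion that it is a left Quillen functor.

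For the cofibrations I would argue by a short chain of comparisons. A left Bousfield localisation does not change the cofibrations, so $n\homog\jcal_0\Top_\qq$ has the same cofibrations as $n\homog\jcal_0\Top$; by Proposition~\ref{prop:nhomogmodel} the latter is a right Bousfield localisation of $n\poly\jcal_0\Top$, and since right localisation keeps the fibrations while enlarging the weak equivalences, its cofibrations form a subclass of those of $n\poly\jcal_0\Top$, which in turn are precisely the objectwise cofibrations. By Theorem~\ref{thm:ratpoly} and Lemma~\ref{lem:doublelocal} we have $n\poly\jcal_0\Top_\qq = L_{\jcal_0\smashprod J_{\qq\Top}}(n\poly\jcal_0\Top)$, so its cofibrations are again the objectwise cofibrations. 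Hence the identity carries cofibrations to cofibrations.

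For the acyclic cofibrations I would reduce to a generating set. The proof of Theorem~\ref{thm:rathomog} produces generating acyclic cofibrations for $n\homog\jcal_0\Top_\qq$ by adjoining $\jcal_n\smashprod J_{\qq\Top}$ to the generating acyclic cofibrations of $n\homog\jcal_0\Top$; since the acyclic cofibrations of $n\poly\jcal_0\Top_\qq$ are closed under pushout, transfinite composition and retract, it is enough to check that each of these two families lands among them. The first family is formal: by Lemma~\ref{lem:doublelocal} the chain of identity functors $n\homog\jcal_0\Top \to n\poly\jcal_0\Top \to n\poly\jcal_0\Top_\qq$ is a composite of a right and then a left Bousfield localisation, hence of left Quillen functors, so it sends acyclic cofibrations to acyclic cofibrations.

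The substance lies in the second family. For $j\co A\to B$ in $J_{\qq\Top}$ and $V\in\skel\lcal$, the map $\jcal_n(V,-)\smashprod j$ is an objectwise cofibration, since $\jcal_n(V,-)$ is objectwise cofibrant (it is cofibrant even in $n\homog\jcal_0\Top$, cf.\ the proof of Theorem~\ref{thm:rathomog}) and smashing a cofibrant based space with a cofibration is a cofibration; so it is a cofibration in $n\poly\jcal_0\Top_\qq$. It is also an objectwise rational homology equivalence: for each $W$ the spaces involved have the homotopy type of CW complexes, so the Künneth theorem over the field $\qq$ identifies the effect of $\jcal_n(V,W)\smashprod j$ on reduced rational homology with $\id\otimes j_*\co \widetilde{\h}_*(\jcal_n(V,W))\otimes\widetilde{\h}_*(A)\to\widetilde{\h}_*(\jcal_n(V,W))\otimes\widetilde{\h}_*(B)$, which is an isomorphism because $j$ is a rational homology equivalence. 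Therefore $\jcal_n(V,-)\smashprod j$ is a weak equivalence in the objectwise rational model structure $\jcal_0\Top_\qq$, and a fortiori in its further left localisation $n\poly\jcal_0\Top_\qq$, so it is an acyclic cofibration there. I expect this last point --- checking that smashing the generators of $J_{\qq\Top}$ with the spaces $\jcal_n(V,W)$ stays inside the rational equivalences and survives into $n\poly\jcal_0\Top_\qq$ --- to be the only real obstacle; everything else is bookkeeping with Bousfield localisations. Equivalently, and perhaps more cleanly, one may invoke the universal property of left Bousfield localisation \cite{hir03}: the identity $n\homog\jcal_0\Top \to n\poly\jcal_0\Top_\qq$ is left Quillen by the above and inverts every map of $\jcal_n\smashprod J_{\qq\Top}$ (the same Künneth computation), so it descends to a left Quillen functor out of $L_{\jcal_n\smashprod J_{\qq\Top}}(n\homog\jcal_0\Top)=n\homog\jcal_0\Top_\qq$.
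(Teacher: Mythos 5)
Your proof is correct and shares the same high-level architecture as the paper's: show that the maps in $\jcal_n \smashprod J_{\qq\Top}$ become acyclic cofibrations in $n\poly\jcal_0\Top_\qq$, then conclude (either by checking generators, as you do, or by invoking the universal property of left Bousfield localisation, which is what the paper does implicitly). The substantive difference is in how that key verification is carried out. The paper appeals to the Quillen bifunctor structure of the objectwise smash product $n\poly\jcal_0\Top_\qq \times \Top_\qq \to n\poly\jcal_0\Top_\qq$ established in the proof of Theorem~\ref{thm:ratpoly}, combined with the cofibrancy of $\jcal_n(U,-)$; the pushout-product axiom then does the work immediately. You instead give an explicit K\"unneth computation, showing that smashing a CW complex with an $\h\qq$-equivalence stays an $\h\qq$-equivalence, and then pass through the chain of localisations $\jcal_0\Top_\qq \to n\poly\jcal_0\Top_\qq$. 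Both routes are valid; the paper's is more abstract and reuses earlier machinery, while yours is more hands-on and makes the rational-homology content visible. Your preliminary bookkeeping on cofibrations (tracking them through the right and left localisations) is also correct and fills in a step the paper leaves silent.
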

\begin{proof}
We have localised the first category with respect to
$\jcal_n \smashprod J_{\qq \Top}$.
Recall that $\jcal_n(U,-)$ is cofibrant in $n \poly \jcal_0 \Top_\qq$
by \cite[Lemma 6.2]{barnesoman13} (in the notation of that paper
$\jcal_n(U,V) =\mor_n(U,V)$).
As mentioned in the proof of Theorem \ref{thm:ratpoly}
the objectwise smash product gives a Quillen bifunctor from
$n \poly \jcal_0 \Top_\qq \times \Top_\qq$ to
$n \poly \jcal_0 \Top_\qq$.
Hence the maps in $\jcal_n \smashprod J_{\qq \Top}$
are acyclic cofibrations in the rational
$n$-polynomial model structure
and we have a left adjoint as claimed.
\end{proof}

\begin{corollary}
  Let $F \in \jcal_0 \Top$. The cofibrant replacement of $D_n^\qq F$
  in the levelwise model structure on $\jcal_0 \Top$ is a fibrant-cofibrant
  object of $n \homog \jcal_0 \Top_\qq$.
\end{corollary}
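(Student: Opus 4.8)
The plan is to first show that $D_n^\qq F$ is already fibrant in $n \homog \jcal_0 \Top_\qq$, and then to check that passing to a levelwise cofibrant replacement preserves fibrancy and supplies cofibrancy. The subtlety to keep in mind is that fibrancy in $n \homog \jcal_0 \Top_\qq$ is not homotopy invariant, so one cannot simply argue that the cofibrant replacement, being weakly equivalent to $D_n^\qq F$, is fibrant; instead one separates fibrancy into a piece controlled by Proposition \ref{prop:nhomogmodel} and a homotopy-invariant locality piece.

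First I would establish that $D_n^\qq F$ is fibrant in $n \homog \jcal_0 \Top_\qq$. Recall $D_n^\qq F$ is the homotopy fibre of the natural map $T_n(\fibrep_{\h\qq}F) \to T_{n-1}(\fibrep_{\h\qq}F)$. By Theorem \ref{thm:ratpoly} and its proof, $T_n(\fibrep_{\h\qq}F)$ is $n$-polynomial and objectwise $\h\qq$-local, hence fibrant in $n \poly \jcal_0 \Top_\qq$; and $T_{n-1}(\fibrep_{\h\qq}F)$ is $(n-1)$-polynomial, hence $n$-polynomial, and objectwise $\h\qq$-local, so it too is fibrant in $n \poly \jcal_0 \Top_\qq$. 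Factoring the comparison map in $n \poly \jcal_0 \Top_\qq$ as a trivial cofibration $T_n(\fibrep_{\h\qq}F) \to E$ followed by a fibration $p \co E \to T_{n-1}(\fibrep_{\h\qq}F)$, and noting source and target are fibrant, the strict fibre of $p$ over the basepoint is a model for the homotopy fibre, so I may take it to be $D_n^\qq F$. Now the lemma preceding this corollary says the identity is a left Quillen functor $n \homog \jcal_0 \Top_\qq \to n \poly \jcal_0 \Top_\qq$, so its adjoint, the identity $n \poly \jcal_0 \Top_\qq \to n \homog \jcal_0 \Top_\qq$, is right Quillen and therefore carries $p$ to a fibration of $n \homog \jcal_0 \Top_\qq$. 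Pulling this back along $\ast \to T_{n-1}(\fibrep_{\h\qq}F)$ exhibits $D_n^\qq F \to \ast$ as a fibration of $n \homog \jcal_0 \Top_\qq$, so $D_n^\qq F$ is fibrant there; in particular it is local with respect to $\jcal_n \smashprod J_{\qq \Top}$.

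Next, write $X$ for the cofibrant replacement of $D_n^\qq F$ in the levelwise model structure, so $X$ is levelwise cofibrant and levelwise weakly equivalent to $D_n^\qq F$. Since $T_n$ and $T_{n-1}$ are homotopy functors, being $n$-homogeneous is invariant under levelwise weak equivalences, so $X$ is $n$-homogeneous. By Proposition \ref{prop:nhomogmodel} an $n$-homogeneous, levelwise cofibrant functor is cofibrant-fibrant in $n \homog \jcal_0 \Top$, so $X$ is both cofibrant and fibrant in $n \homog \jcal_0 \Top$. As $n \homog \jcal_0 \Top_\qq$ is, by Theorem \ref{thm:rathomog}, the left Bousfield localisation $L_{\jcal_n \smashprod J_{\qq \Top}}(n \homog \jcal_0 \Top)$, it has the same cofibrations, so $X$ is cofibrant in $n \homog \jcal_0 \Top_\qq$. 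For fibrancy it remains to check that $X$ is local with respect to $\jcal_n \smashprod J_{\qq \Top}$: a levelwise weak equivalence is a weak equivalence of $n \homog \jcal_0 \Top$ (the localisations defining that structure only enlarge the weak equivalences), and whether an object is $(\jcal_n \smashprod J_{\qq \Top})$-local depends only on its weak equivalence type, so $X$ inherits $(\jcal_n \smashprod J_{\qq \Top})$-locality from $D_n^\qq F$. Being fibrant in $n \homog \jcal_0 \Top$ and $(\jcal_n \smashprod J_{\qq \Top})$-local, $X$ is fibrant in $n \homog \jcal_0 \Top_\qq$.

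I expect the first step to be the main obstacle. There one must be careful to pass from the homotopy-fibre description to an honest pullback of an honest fibration before invoking the right Quillen identity, and to verify that $T_n(\fibrep_{\h\qq}F)$ and $T_{n-1}(\fibrep_{\h\qq}F)$ are fibrant in the rational $n$-polynomial model structure — not merely in the objectwise rational structure — so that the factorisation and the right Quillen identity are used in the right category. Once $D_n^\qq F$ is known to be fibrant in $n \homog \jcal_0 \Top_\qq$, the remaining steps are formal, resting on Proposition \ref{prop:nhomogmodel} and the invariance of locality under weak equivalence.
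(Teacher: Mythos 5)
Your argument is correct, and it fills in reasoning the paper leaves implicit (the corollary is stated without proof, positioned so as to follow from Theorem \ref{thm:rathomog}, Proposition \ref{prop:nhomogmodel}, and the preceding lemma on the identity being left Quillen). You have correctly isolated the one non-formal point: fibrancy in $n \homog \jcal_0 \Top_\qq$ is not invariant under levelwise weak equivalence, so one must separately transfer the two ingredients of fibrancy. Your split works: the underlying-fibrancy (and cofibrancy) of the levelwise cofibrant replacement $X$ is exactly Proposition \ref{prop:nhomogmodel} applied to an $n$-homogeneous levelwise cofibrant object, and $(\jcal_n \smashprod J_{\qq \Top})$-locality, being weak-equivalence-invariant, is inherited from a suitable model of $D_n^\qq F$.

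Your first step, producing a model of $D_n^\qq F$ that is fibrant in $n \homog \jcal_0 \Top_\qq$ by factoring in $n \poly \jcal_0 \Top_\qq$ and applying the right Quillen identity, is the cleanest way to obtain the locality input; the observation that the source and target of the comparison map are fibrant in $n \poly \jcal_0 \Top_\qq$ is indeed needed (so that the trivial cofibration in the localised structure is a levelwise equivalence, guaranteeing the strict fibre of $p$ genuinely models $D_n^\qq F$), and you note this correctly. The only place I would tighten the wording is your use of Theorem \ref{thm:rathomog} for the fibrant objects: the theorem statement says ``$n$-homogeneous'' but its proof shows the fibrant objects of $n \homog \jcal_0 \Top_\qq$ are the $n$-polynomial functors $G$ with $\ind_0^n G$ objectwise $\h\qq$-local; this does not affect your argument since you verify fibrancy via the general characterisation of fibrant objects in a left Bousfield localisation (fibrant in the underlying structure plus local), which is the form you actually need.
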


To summarise, we have a diagram of model structures and Quillen adjunctions
\[
\xymatrix@C+1cm{
n \homog \jcal_0 \Top_\qq \ar@<+1ex>[r]^-{\id} &
\ar@<+1ex>[l]^-{\id}
n \poly \jcal_0 \Top_\qq  \ar@<+1ex>[r]^-{\id} &
\ar@<+1ex>[l]^-{\id}
(n-1) \poly \jcal_0 \Top_\qq
}
\]
whose homotopy categories and derived functors are
\[
\xymatrix@C+1cm{
\txt{rational\\$n$-homogeneous\\functors}
\ar@<+1ex>[r]^-{\inc} &
\ar@<+1ex>[l]^-{D_n^\qq}
\txt{rational\\$n$-poynomial\\functors}  \ar@<+1ex>[r]^-{T_{n-1}^\qq} &
\ar@<+1ex>[l]^-{\inc}
\txt{rational\\$(n-1)$-poynomial\\functors.}
}
\]

\section{The stable categories}\label{sec:stablecat}
Weiss' classification of $n$-homogeneous functors
in terms of spectra with an $O(n)$-action was lifted to the level of Quillen
equivalences of model categories in \cite[Section 10]{barnesoman13}.
In this section we extend this result to the rationalised case.
We start by introducing $O(n) \ltimes (\jcal_n \Top)$,
an intermediate category between 
spectra with an action of $O(n)$ and $n$-homogeneous functors.

\subsection{The intermediate categories}

Recall the vector bundles $\gamma_n$ over $\lcal$:
\[
\gamma_n(U,V) =
\{
\ (f,x) \ \mid \ f \in \lcal(U,V), \ x \in \rr^n \otimes (V- f(U)) \
\}.
\]
Define $\jcal_n(U,V)$ to be the Thom space of $\gamma_n(U,V)$.
There is a composition map
\[
\begin{array}{rcl}
\gamma_n(U,V) \times \gamma_n(V,W)
& \to &
\gamma_n(U,W) \\
( (f,x) , (g,y) )
& \mapsto &
(g \circ f , g(x) + y )
\end{array}
\]
which induces a composition on the corresponding Thom spaces.
Thus $\jcal_n$ has the structure of a category enriched over based topological spaces.
Furthermore, the standard action of the group $O(n)$ on $\rr^n$ induces
an action on the vector bundles
$\gamma_n(U,V)$ that is compatible with the composition maps above.
Hence $\jcal_n$ is a category enriched
over based topological spaces with an action of $O(n)$.

Since we have encountered spaces with $O(n)$-action, we briefly recap
that notion. 
We say that a map $f \co X \to Y$ of $O(n)$-spaces is \textbf{$O(n)$-equivariant}
if $\sigma f \sigma^{-1} =f$ for all $\sigma \in O(n)$.
We let $O(n) \Top$ denote the category of
based topological spaces with an $O(n)$-action
and $O(n)$-equivariant maps.
Recall that the base point is required to be $O(n)$-fixed.
The category $O(n) \Top$ is closed symmetric monoidal.
The monoidal product is given by the smash product of spaces, with $O(n)$ acting diagonally.
The internal function object is given by the space of (non-equivariant) maps
with $\sigma \in O(n)$ acting by conjugation: $f \mapsto \sigma f \sigma^{-1}$.
See \cite[Section II.1]{mm02} for details.

Now we can consider functors from $\jcal_n$ to $O(n) \Top$ that
preserve both the topological structure and the $O(n)$-action.
\begin{definition}
For $n \geqslant 0$, define $O(n) \ltimes (\jcal_n \Top)$ to be the
category of functors enriched over $O(n) \Top$ from
$\jcal_n$ to $O(n) \Top$. Thus if $F \in O(n) \ltimes (\jcal_n \Top)$
we have a map
\[
F_{U,V} \co \jcal_n(U,V) \longrightarrow \Top(F(U),F(V))
\]
for any $U,V \in \jcal_n$ which is continuous and is equivariant
with respect to the $O(n)$ action on the domain and codomain.
\end{definition}

The category $O(n) \ltimes (\jcal_n \Top)$ was denoted $O(n) \ecal_n$ in \cite{barnesoman13}.
There is an adjunction between this category and spectra with an action of $O(n)$, 
see \cite[Section 8]{barnesoman13} for details.  

\begin{definition}
There is a morphism of enriched categories $\alpha_n \co \jcal_n \to \jcal_1$
which sends $V$ to $\rr^n \otimes V$ and acts on mapping spaces by
\[
\begin{array}{rcl}
\jcal_n(U,V) & \to & \jcal_1(\rr^n \otimes U, \rr^n \otimes V) \\
(f,x) & \mapsto & (\rr^n \otimes f, x)
\end{array}
\]
This induces a functor
$\alpha_n^* \co O(n) \ltimes (\jcal_n \Top) \to O(n)\Sp$.
It is defined as $(\alpha_n^* E)(V) = E(\rr^n \otimes V)$,
but the action on the $O(n)$-space $E(\rr^n \otimes V)$ is altered to
use both the pre-existing action and also the action induced by $O(n)$
acting on $\rr^n \otimes V$. This has a left adjoint called
$\jcal_n \smashprod_{\jcal_1} (-)$.
\end{definition}

We also want to compare $O(n) \ltimes (\jcal_n \Top) $ and 
$\jcal_0 \Top$. 
The inclusion $\{ 0 \}= \rr^0 \to \rr^n$ induces a map of vector bundles
$\gamma_0 \to \gamma_n$ and a map of enriched categories
$i_n \co \jcal_0 \to \jcal_n$. This creates an adjoint pair between 
$O(n) \ltimes (\jcal_n \Top)$ and $\jcal_0 \Top$. 

\begin{definition}
Let $E \in O(n) \ltimes (\jcal_n \Top)$, then we can define an object
$\res_0^n/O(n) E$ of $\jcal_0 \Top$ by $V \mapsto E(V)/O(n)$.
This defines a functor $\res_0^n/O(n)$ from $O(n) \ltimes (\jcal_n \Top)$
to $\jcal_0 \Top$. There is a right adjoint to this functor, written $\ind_0^n$:
\[
(\ind_0^n F)(V) = \nat_{\jcal_0 \Top} (\res_0^n \jcal_n(V,-), F).
\]
The $O(n)$-action on the space $(\ind_0^n F)(V)$
is induced by the $O(n)$ action on $\jcal_n(V,-)$.
We call $\ind_0^n F$ the \textbf{$n^{th}$-derivative of $F$}
\end{definition}

We thus have adjunctions as below, with left adjoints on top.
\[
\xymatrix@C+1cm{
O(n)\Sp
\ar@<-1ex>[r]_-{\alpha_n^*}
&
\ar@<-1ex>[l]_-{(-)\smashprod_{\jcal_n}\jcal_1}
O(n) \ltimes (\jcal_n \Top)
\ar@<+1ex>[r]^-{\res_0^n/O(n)}
&
\ar@<+1 ex>[l]^-{\ind_0^n \varepsilon^*}
\jcal_0 \Top
}
\]

\subsection{The stable model structures}

We need to equip the middle and left hand categories with
model structures so that these adjunctions become Quillen equivalences
when $\jcal_0 \Top$ has the $n$-homogeneous model structure.
Our starting place is to put a
model structure on $O(n) \Top$ as below.
One can check this model
structure exists by lifting the model structure
on based spaces over the functor $O(n)_+ \smashprod -$, see
\cite[Theorem 11.3.2]{hir03}.
\begin{lemma}
The category $O(n) \Top$ of based spaces with $O(n)$-action
has a cellular and proper model structure, with generating sets given by
\[
\begin{array}{rcl}
I_{O(n) \Top} & = & \{
O(n)_+ \smashprod S^{k-1}_+ \longrightarrow O(n)_+ \smashprod D^k_+ \ | \
k \geqslant 0 \} \\
J_{O(n) \Top} & = & \{
O(n) \smashprod D^{k}_+ \longrightarrow O(n)_+ \smashprod (D^k \times [0,1])_+ \ | \
k \geqslant 0  \}
\end{array}
\]
The cofibrant objects have no fixed points (except the base point),
the weak equivalences are the underlying weak homotopy equivalences.
\end{lemma}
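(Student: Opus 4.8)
The plan is to realise this model structure as the transfer (lifting) of the standard Serre model structure on based topological spaces along the free--forgetful adjunction
\[
O(n)_+ \smashprod (-) \co \Top \ \rightleftarrows \ O(n) \Top \ \co U,
\]
where $U$ forgets the $O(n)$--action. First I would declare a map $f$ of $O(n)$--spaces to be a weak equivalence, respectively a fibration, precisely when $Uf$ is a weak homotopy equivalence, respectively a Serre fibration, of based spaces, so that the cofibrations are forced by lifting. The candidate generating cofibrations and generating acyclic cofibrations are then the images under $O(n)_+ \smashprod (-)$ of $I_{\Top}$ and $J_{\Top}$, which are exactly the sets $I_{O(n) \Top}$ and $J_{O(n) \Top}$ displayed in the statement. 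With this set-up the existence of the model structure is precisely \cite[Theorem 11.3.2]{hir03}, and the only real work is in verifying its two hypotheses.

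The smallness hypothesis is immediate: every domain occurring in $I_{O(n) \Top}$ or $J_{O(n) \Top}$ has the form $O(n)_+ \smashprod (\text{finite CW complex})$, hence is compact, and so small relative to all of $O(n) \Top$. The genuine content is the acyclicity hypothesis, that every relative $J_{O(n) \Top}$--cell complex is an underlying weak homotopy equivalence; this is the step I expect to be the main obstacle, although it turns out to be mild. The key point is that $U$ preserves all small colimits, since colimits of $O(n)$--spaces are computed on underlying spaces with the induced action; in particular $U$ preserves pushouts and transfinite composites. It therefore suffices to observe that $Uj$ is an acyclic cofibration of based spaces for every $j \in J_{O(n) \Top}$ --- which is clear, since $Uj$ is the inclusion $(O(n) \times D^k)_+ \to (O(n) \times D^k \times [0,1])_+$ of a deformation retract --- together with the standard fact that acyclic cofibrations of spaces are closed under pushout and transfinite composition. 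Concretely this uses only that $O(n)$ is a CW complex, and it is exactly the step a careful reader will want spelled out.

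From \cite[Theorem 11.3.2]{hir03} I would then read off that $U$ preserves and reflects fibrations and trivial fibrations, which pins down the weak equivalences as the underlying weak homotopy equivalences. For cellularity I would check Hirschhorn's additional conditions \cite[Definition 12.1.1]{hir03} on $I_{O(n) \Top}$ and $J_{O(n) \Top}$: the domains and codomains are compact, and relative $O(n)$--cell complexes are effective monomorphisms; these follow from the corresponding facts for $\Top$, as in \cite{mmss01}, using once more that $U$ preserves colimits and reflects the relevant structure. For properness I would use that $U$ also preserves pullbacks, and that it carries (acyclic) cofibrations of $O(n) \Top$ to (acyclic) cofibrations of spaces, since a relative $O(n)$--cell complex has an underlying relative cell complex; as $U$ detects weak equivalences, left and right properness both descend from $\Top$.

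Finally, for the description of the cofibrant objects: such an object is a retract of a cell complex assembled from the cells $O(n)_+ \smashprod D^k_+$, on each of which $O(n)$ acts freely away from the basepoint because it acts freely on itself. The property of having no fixed points other than the basepoint passes to pushouts along the generating cofibrations, to transfinite composites, and to retracts: if $x$ were a non--basepoint fixed point of a retract $X$ of a cell complex $Y$, then its image under an equivariant section $X \to Y$ would be a non--basepoint fixed point of $Y$, which is impossible. Hence every cofibrant object is free away from the basepoint, as claimed. I would also note, although the statement does not ask for it, that the resulting model structure is topological over $\Top$, with tensoring $X \smashprod K$ and cotensoring $\Top(K,X)$, the pushout--product axiom being inherited from that of $\Top$.
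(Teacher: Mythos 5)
Your proposal is correct and follows essentially the same route as the paper, which simply lifts the Serre model structure on based spaces along $O(n)_+\smashprod(-)$ via \cite[Theorem 11.3.2]{hir03}. You supply the verification of the smallness and acyclicity hypotheses, cellularity, properness, and the freeness of cofibrant objects in more detail, but the underlying strategy is identical.
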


Using this model structure on $O(n)$-spaces,
we can equip $O(n) \ltimes (\jcal_n \Top)$ and $O(n) \Sp$
with \textbf{levelwise model structures} similar
to that for $\jcal_0 \Top$. These model structures are
proper and cellular. We want to make them into stable model categories.
For that, we need a new class of weak equivalences.
The idea is to generalise the
notion of $\pi_*$-isomorphisms of spectra.
For full details, see \cite[Section 7]{barnesoman13}.

For $V$ a vector space, $\rr^n \otimes V$
has the $O(n)$-action induced by the standard representation of $O(n)$ on $\rr^n$.
Let $S^{nV}$ be the one-point compactification of $\rr^n \otimes V$, then for 
$E \in O(n) \ltimes (\jcal_n \Top)$ there are $O(n)$-equivariant maps
\[
E(U) \smashprod S^{nV} \to E(U \oplus V).
\]
Just as with the definition of the stable homotopy groups of a spectrum, 
we can use these maps (and $n$-fold suspension) to construct 
the $n \pi_*$-homotopy groups of $E$. 

\begin{proposition}
The categories $O(n) \Sp$ and $O(n) \ltimes (\jcal_n \Top)$
have \textbf{stable model structures} where the weak equivalences
are the $\pi_*$ and $n\pi_*$-isomorphisms respectively.
The cofibrant objects are objectwise $O(n)$-free. The fibrant objects are
those whose adjoints of the suspension maps are weak homotopy equivalences,
so for $E \in O(n) \ltimes (\jcal_n \Top)$, the map
\[
E(U) \longrightarrow \Omega^{nV} E(U \oplus V)
\]
is a weak homotopy equivalence.
These model structures are stable, cellular and proper.
\end{proposition}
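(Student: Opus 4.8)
The plan is to realise both stable model structures as left Bousfield localisations of the levelwise model structures just described, following \cite{mmss01} and \cite[Section 7]{barnesoman13}. I describe the argument for $O(n) \ltimes (\jcal_n \Top)$; the case of $O(n) \Sp$ is formally identical, with $S^{nV}$ replaced by an ordinary sphere and $n\pi_*$ by $\pi_*$. Since the levelwise model structure is cellular, proper and topological, the localisation theorem \cite[Theorem 4.1.1]{hir03} applies once we choose a set of cofibrations to localise at. The correct set consists of the stabilisation maps, made into cofibrations by pushout--product with the generating cofibrations $I_{O(n)\Top}$: for $U,V \in \skel\lcal$ the adjoint of the structure map $E(U)\smashprod S^{nV}\to E(U\oplus V)$ is, by the enriched Yoneda lemma (using $\nat(\jcal_n(U,-)\smashprod A,E) = \Top(A,E(U))$), classified by a map of representables
\[
\lambda^U_V \co \jcal_n(U\oplus V,-)\smashprod S^{nV}\longrightarrow \jcal_n(U,-),
\]
and we set $S = \{\, \lambda^U_V \mathbin{\square} i \mid U,V\in\skel\lcal,\ i\in I_{O(n)\Top} \,\}$. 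With this choice, a levelwise fibrant object $E$ is $S$-local precisely when each $\lambda^U_V$ induces a weak equivalence on $\rr\hom(-,E)$, which by Yoneda is exactly the condition that $E(U)\to\Omega^{nV}E(U\oplus V)$ be a weak homotopy equivalence; this gives the stated description of the fibrant objects. The cofibrations --- and hence the cofibrant, objectwise $O(n)$-free, objects --- are unchanged by a left localisation.

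The main work is identifying the $S$-equivalences with the $n\pi_*$-isomorphisms. The key input is that the stabilisation maps $\lambda^U_V$ are themselves $n\pi_*$-isomorphisms and that the $n\pi_*$-groups behave like a homology theory: they fit into long exact sequences and commute with filtered colimits and with the relevant pushouts. Granting this, the unit $E\to\fibrep_S E$ of stable fibrant replacement, being a transfinite composite of pushouts of maps $\lambda^U_V \mathbin{\square} i$, is an $n\pi_*$-isomorphism; since levelwise equivalences are $S$-equivalences and, between $S$-fibrant objects, $S$-equivalences agree with levelwise equivalences, a two-out-of-three argument comparing any map with its stable fibrant replacement shows that the $S$-equivalences are precisely the $n\pi_*$-isomorphisms. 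Establishing the homology-theory-like properties of the $n\pi_*$-groups, and in particular that the stabilisation maps lie in this class, is the part I expect to take real effort; it is carried out in \cite[Section 7]{barnesoman13}, following \cite{mmss01}.

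Cellularity, left properness and the topological enrichment are inherited from the localisation theorem, and stability --- that objectwise suspension induces an equivalence of homotopy categories --- follows formally from the $\Omega$-spectrum description of the fibrant objects, as in \cite{barnesoman13} (compare \cite{barnesroitzheimstable}). The one property that is not automatic is right properness: as stressed in the introduction, a left Bousfield localisation need not preserve it, so a separate argument is needed. Here one uses that a stable fibration between stably fibrant objects is a levelwise fibration, so that the pullback of an $n\pi_*$-isomorphism along such a map is again an $n\pi_*$-isomorphism by the long exact sequence of $n\pi_*$-groups; a general pullback square along a stable fibration is reduced to this situation by functorial (stable) fibrant replacement. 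Thus, alongside the identification of the weak equivalences, right properness is the delicate point, and both are treated in full in \cite[Section 7]{barnesoman13} and \cite{mmss01}.
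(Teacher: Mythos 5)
Your proposal is correct and follows the same route as the paper: both construct the stable model structures as left Bousfield localisations of the levelwise structures at (variants of) the stabilisation maps $\jcal_n(U\oplus V,-)\smashprod S^{nV}\to\jcal_n(U,-)$, deferring the identification of the weak equivalences with $n\pi_*$-isomorphisms and the check of right properness to \cite[Section~7]{barnesoman13}. The minor cosmetic differences --- you localise at pushout-products of stabilisation maps with generating cofibrations and allow arbitrary $V$ rather than only $V=\rr$, whereas the paper lists only the one-step maps and the bare stabilisations --- yield the same localisation and do not change the argument.
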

\begin{proof}
These model structures can be constructed as left Bousfield localisations.
We localise the levelwise model structures at the following sets of maps,
the first is for $O(n) \ltimes (\jcal_n  \Top)$ and the second is for
$O(n) \Sp$.
\[ \begin{array}{c}
\{
\jcal_n(V \oplus \rr, -) \smashprod S^n
\longrightarrow
\jcal_n(V, -)
 \ | \ V \in \skel \lcal
\}\\
\{
\jcal_1(V \oplus \rr, -) \smashprod S^1
\longrightarrow
\jcal_1(V, -)
 \ | \ V \in \skel \lcal
\}
\end{array}
\]
See \cite[Section 7]{barnesoman13} for a fuller discussion.
\end{proof}

Note that when $n=1$, $O(n) \ltimes (\jcal_n \Top)$ and
$O(n) \Sp$ are NOT the same category.
Let $E \in O(1) \ltimes (\jcal_1 \Top)$ and
$F \in O(n) \Sp$. Then for $U$ and $V$
in $\jcal_1$ we have a map of $O(n)$-equivariant spaces
\[
E_{U,V} \co \jcal_1(U,V) \longrightarrow \Top(E(U), E(V))
\]
and a map of spaces (where the $O(1)$ action on
$\jcal_1(U,V)$ is forgotten)
\[
F_{U,V} \co \jcal_1(U,V) \longrightarrow \Top(F(U), F(V))^{O(1)}.
\]

We now need to rationalise  $O(n) \ltimes (\jcal_n  \Top)$ and
$O(n) \Sp$. Each is left proper and cellular,
so we may use \cite{hir03} to perform a left Bousfield localisation
of each of the categories.
Recall the set $J_{\qq \Top}$ of generating acyclic cofibrations
for the rational model structure on based spaces. We use
this set to rationalise spectra with an $O(n)$-action.

\begin{lemma}\label{lem:spectralocal}
We define $O(n) \Sp_\qq$ to be
the left Bousfield localisation of the
stable model structure on $O(n) \Sp$ at the set
\[
Q_n = \{
O(n)_+ \smashprod \jcal_1(U,-) \smashprod j \ | \ j \in J_{\qq \Top}
\}.
\]
The weak equivalences are those maps which induce
isomorphisms on rational stable homotopy groups
(or equally rational homology). The fibrant objects
are the levelwise $\h \qq$-local $\Omega$-spectra.
\end{lemma}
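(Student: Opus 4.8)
The plan is to realise $O(n)\Sp_\qq$ as a left Bousfield localisation, then read off its fibrant objects and weak equivalences using the double-localisation machinery of Lemmas \ref{lem:doublelocal} and \ref{lem:doublefibrant}. Since the stable model structure on $O(n)\Sp$ is cellular and left proper and each map in $Q_n$ is a cofibration (the $O(n)$-spectra $O(n)_+\smashprod\jcal_1(U,-)$ are cofibrant and the $j\in J_{\qq\Top}$ are cofibrations of spaces), \cite[Theorem 4.1.1]{hir03} supplies the localisation $O(n)\Sp_\qq:=L_{Q_n}(O(n)\Sp)$. As the stable structure is itself a left localisation of the levelwise one (written $O(n)\Sp_{\mathrm{lw}}$ here), Lemma \ref{lem:doublelocal} lets us write
\[
O(n)\Sp_\qq \;=\; L_{\mathrm{stab}}\bigl(L_{Q_n}(O(n)\Sp_{\mathrm{lw}})\bigr),
\]
where $\mathrm{stab}$ is the set of stabilisation maps from the preceding proposition; the inner localisation $L_{Q_n}(O(n)\Sp_{\mathrm{lw}})$ is an objectwise rational model structure, with fibrant objects the levelwise $\h\qq$-local spectra and weak equivalences the levelwise rational homology isomorphisms, established exactly as for $\jcal_0\Top_\qq$.

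First I would identify the fibrant objects. By \cite[Theorem 4.11]{barnesroitzheimstable}, as in the proof of Theorem \ref{thm:rathomog}, the generating acyclic cofibrations of $O(n)\Sp_\qq$ may be taken to be those of the stable structure together with $Q_n$ itself, so a map is a fibration iff it is a stable fibration with the right lifting property against $Q_n$. Because $O(n)_+\smashprod\jcal_1(U,-)$ is the free $O(n)$-spectrum on the representable at $U$, adjunction shows that $Z\to\ast$ has the right lifting property against $O(n)_+\smashprod\jcal_1(U,-)\smashprod j$ for all $U$ and all $j\in J_{\qq\Top}$ precisely when each space $Z(U)$ is fibrant in $L_{\h\qq}\Top$. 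Hence the fibrant objects of $O(n)\Sp_\qq$ are exactly the levelwise $\h\qq$-local $\Omega$-spectra.

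Next I would pin down the weak equivalences. The key input is that levelwise $\h\qq$-localisation carries $\Omega$-spectra to $\Omega$-spectra: if $Z$ is an $\Omega$-spectrum then the spaces $\Omega^{nV}Z(U\oplus V)$ are infinite loop spaces, hence nilpotent, and Bousfield's localisation commutes with finite loops on such spaces, so the structure maps of the levelwise rationalisation of $Z$ are again weak equivalences. Granting this, Lemma \ref{lem:doublefibrant} (applied to $S=\mathrm{stab}$, $T=Q_n$) identifies the weak equivalences of $O(n)\Sp_\qq$ as the stable equivalences between levelwise rationalisations; concretely, $f$ is a weak equivalence iff the induced map on rationalised stable homotopy groups is an isomorphism. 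Finally, since $\mathbb{S}_\qq\simeq\h\qq$ one has $\pi_*(X)\otimes\qq\cong\pi_*(X\smashprod\mathbb{S}_\qq)\cong\h_*(X;\qq)$ for $O(n)$-spectra, so the rational $\pi_*$-isomorphisms coincide with the rational homology isomorphisms, as claimed.

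The part I expect to be the main obstacle is making the passage from the levelwise to the stable statement fully rigorous: one must check that $\fibrep_{Q_n}$ (levelwise $\h\qq$-localisation) preserves the $\mathrm{stab}$-local objects so that Lemma \ref{lem:doublefibrant} genuinely applies, and then that ``stable equivalence of levelwise rationalisations'' really is the same as ``rational stable homotopy isomorphism''. This is where Bousfield's unstable $\h\qq$-localisation must be handled with care, since the individual spaces of a general spectrum need not be nilpotent; the device above (reducing to $\Omega$-spectra, where all spaces are infinite loop spaces) is exactly what lets the localisation be controlled, and the double-localisation description keeps the bookkeeping honest.
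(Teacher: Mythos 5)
Your proof is substantively correct but takes a genuinely different route from the paper. The paper's proof is a one-line citation to \cite[Lemma 8.6]{barnesroitzheimframings} and \cite[Lemma 4.14]{barnesroitzheimstable}, which package the existence of the localisation and the characterisation of its fibrant objects and weak equivalences directly in the setting of (equivariant) spectra. You instead give a self-contained argument built from the paper's own double-localisation machinery (Lemmas \ref{lem:doublelocal} and \ref{lem:doublefibrant}) together with the unstable fact that $\h\qq$-localisation of spaces commutes with finite loops on infinite loop spaces. Both routes land on the same statement; yours has the merit of staying inside the paper's toolkit and making the structure of the argument visible, while the paper's citation outsources precisely the delicate unstable-localisation bookkeeping that you flag as the main obstacle.

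Two points to tidy. First, when you invoke Lemma \ref{lem:doublefibrant} you describe the weak equivalences as ``stable equivalences between levelwise rationalisations,'' but with $S=\mathrm{stab}$ and $T=Q_n$ the lemma gives the transposed description: $f$ is a weak equivalence iff $\fibrep_{\mathrm{stab}} f$ (the $\Omega$-spectrum replacement) is a $Q_n$-equivalence, i.e.\ a levelwise rational equivalence. Your final conclusion (rational $\pi_*$-isomorphism, hence rational homology isomorphism via $\mathbb{S}_\qq\simeq \h\qq$) is the same either way, but the intermediate label should be reversed. Second, the hypothesis of Lemma \ref{lem:doublefibrant}, that levelwise $\h\qq$-localisation preserves $\Omega$-spectra, deserves a bit more care than ``infinite loop spaces are nilpotent'': the spaces of an $\Omega$-spectrum may be disconnected with $\pi_0$ an abelian group, so one needs the statement that Bousfield $\h\qq$-localisation of a (possibly disconnected) infinite loop space commutes with looping at the basepoint, which requires a word about components and $\h\qq$-goodness. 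This is exactly what the cited external lemmas take care of, and your closing paragraph is right to single it out.
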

\begin{proof}
This follows from \cite[Lemma 8.6]{barnesroitzheimframings} and
\cite[Lemma 4.14]{barnesroitzheimstable}.
\end{proof}

Now we perform the same operation with
the stable model structure on $O(n) \ltimes (\jcal_n \Top)$.

\begin{definition}
Define the rational model structure on
$O(n) \ltimes (\jcal_n \Top)$ to be
the localisation of the stable model structure on
$O(n) \ltimes (\jcal_n \Top)$ with respect to
\[
Q_n'= \{
O(n)_+ \smashprod \jcal_n(U,-) \smashprod j \ | \ j \in J_{\qq \Top} \ U \in \skel \jcal_n
\}.
\]
We denote this model structure $O(n) \ltimes (\jcal_n \Top)_\qq$.
\end{definition}

It is easily checked that the fibrant objects
of $O(n) \ltimes (\jcal_n \Top)_\qq$ are the objectwise $\h \qq$-local
objects whose adjoints of suspensions maps are weak homotopy equivalences.
We are now ready to prove our main result, Theorem \ref{thm:nohomogmodel}.

\subsection{The Quillen equivalences}

\begin{proposition}\label{prop:QEstable1}
There is a Quillen equivalence
\[
\xymatrix@C+1cm{
O(n)\Sp_\qq
\ar@<-1ex>[r]_-{\alpha_n^*}
&
\ar@<-1ex>[l]_-{(-)\smashprod_{\jcal_n}\jcal_1}
O(n) \ltimes (\jcal_n \Top)_\qq.
}
\]
\end{proposition}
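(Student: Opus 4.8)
The plan is to bootstrap the desired Quillen equivalence from the one already established (in \cite{barnesoman13}) between the \emph{stable} (i.e.\ non-rational) model structures on $O(n)\Sp$ and $O(n)\ltimes(\jcal_n\Top)$, using the fact that both rational structures were produced by left Bousfield localisation at ``the same'' set of maps transported across the adjunction. First I would recall that $(-)\smashprod_{\jcal_1}\jcal_1$ (the left adjoint, written here $(-)\smashprod_{\jcal_n}\jcal_1$) and $\alpha_n^*$ already form a Quillen equivalence on the stable model structures. Since $O(n)\Sp_\qq = L_{Q_n}(O(n)\Sp)$ and $O(n)\ltimes(\jcal_n\Top)_\qq = L_{Q_n'}(O(n)\ltimes(\jcal_n\Top))$, the strategy is to invoke the general principle that a Quillen equivalence $\mathcal{C}\rightleftarrows\mathcal{D}$ descends to a Quillen equivalence $L_{S}\mathcal{C}\rightleftarrows L_{\mathbb{L}F(S)}\mathcal{D}$ between the left localisations, provided the localising set on one side is the image (up to derived equivalence) of the localising set on the other. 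Concretely I would check that the left derived functor of $(-)\smashprod_{\jcal_1}\jcal_1$ sends $Q_n' $ to a set of maps with the same local objects as $Q_n$ (or vice versa for $\alpha_n^*$), so that the two localisations match.

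The key computation is therefore: $(-)\smashprod_{\jcal_n}\jcal_1$ applied to $O(n)_+\smashprod\jcal_n(U,-)\smashprod j$ yields $O(n)_+\smashprod\jcal_1(\rr^n\otimes U,-)\smashprod j$ (using that $\jcal_n(U,-)\smashprod_{\jcal_n}\jcal_1 = \jcal_1(\alpha_n U,-)=\jcal_1(\rr^n\otimes U,-)$ and that smashing with an $O(n)$-space and with the space-level $j$ commute with the pushout of categories). These are representables smashed with $j\in J_{\qq\Top}$, and as $U$ ranges over a skeleton of $\jcal_n$, the spaces $\rr^n\otimes U$ range over (a cofinal subcollection of) a skeleton of $\jcal_1$; an object is local with respect to this image exactly when it is local with respect to $Q_n$, since the extra representables $\jcal_1(V,-)$ for $V$ not of the form $\rr^n\otimes U$ are retracts up to homotopy of the ones that do appear, or more simply because $\h\qq$-locality is detected levelwise and $\{\rr^n\otimes U\}$ exhausts all vector spaces up to the dimension needed. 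Hence $L_{\mathbb{L}F(Q_n')}(O(n)\Sp)$ and $L_{Q_n}(O(n)\Sp)=O(n)\Sp_\qq$ coincide as model structures, and the general localisation-of-Quillen-equivalences lemma (see \cite[Theorem 3.3.20]{hir03}, or the argument that $F$ preserves $S$-equivalences and $G$ preserves $S$-local objects) finishes the proof.

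It remains to verify the hypotheses of that lemma: that $\mathbb{L}((-)\smashprod_{\jcal_n}\jcal_1)$ sends $Q_n'$-equivalences to $Q_n$-equivalences (automatic once the image of $Q_n'$ generates the same localisation) and that the right adjoint $\alpha_n^*$ preserves fibrant objects, i.e.\ sends levelwise $\h\qq$-local $n\Omega$-spectra in $O(n)\ltimes(\jcal_n\Top)_\qq$ to levelwise $\h\qq$-local $\Omega$-spectra in $O(n)\Sp_\qq$. The latter is immediate from the explicit description of $\alpha_n^*$ as $(\alpha_n^*E)(V)=E(\rr^n\otimes V)$ with a twisted $O(n)$-action: evaluating at $\rr^n\otimes V$ preserves objectwise $\h\qq$-locality, and the structure maps of $\alpha_n^*E$ being equivalences follows from those of $E$ exactly as in the non-rational case. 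Finally one notes that a Quillen adjunction between left Bousfield localisations in which the underlying adjunction is a Quillen equivalence and in which the derived unit/counit on cofibrant-fibrant objects is a local equivalence (here it is even a weak equivalence of the underlying categories, hence certainly a rational equivalence) is again a Quillen equivalence.

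The main obstacle I anticipate is the bookkeeping around the twisted $O(n)$-action in $\alpha_n^*$ and the identification $\jcal_n(U,-)\smashprod_{\jcal_n}\jcal_1\simeq\jcal_1(\rr^n\otimes U,-)$ at the level of $O(n)$-objects rather than just underlying spectra: one must be careful that the $O(n)$-equivariant enrichment is respected so that the localising sets really do correspond. Once that compatibility is in hand — and it is essentially the content of the stable Quillen equivalence of \cite{barnesoman13} together with the fact that both rationalisations localise at $O(n)_+\smashprod(\text{representable})\smashprod j$ for the \emph{same} set $J_{\qq\Top}$ — the rest is a formal application of Hirschhorn's machinery.
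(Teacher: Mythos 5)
Your proposal is correct and follows essentially the same route as the paper: compute the image of the localising set $Q_n'$ under $(-)\smashprod_{\jcal_n}\jcal_1$, observe that it lands among $\h\qq$-local equivalences in $O(n)\Sp_\qq$, and invoke the general machinery (the paper uses \cite[Proposition 3.3.18]{hir03} for the Quillen pair and \cite[Proposition 2.3]{hov01} for the equivalence, where you cite Hirschhorn's Theorem 3.3.20) to descend the non-rational Quillen equivalence of \cite[Section 8]{barnesoman13} to the localisations. The one place you are a little more explicit than the paper — arguing that $L_{\mathbb{L}F(Q_n')}(O(n)\Sp)$ coincides with $L_{Q_n}(O(n)\Sp)$ because the representables $\jcal_1(\rr^n\otimes U,-)$ are cofinal in dimension and the fibrant objects are $\Omega$-spectra — is a genuine (and welcome) filling-in of a step the paper leaves implicit.
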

\begin{proof}
If we apply $\jcal_1 \smashprod_{\jcal_n}(-)$ to a map of the form
$O(n)_+ \smashprod \jcal_n(U,-) \smashprod j$ for $j$ some
trivial cofibration in $L_{\h \qq} \Top$
we obtain the map $O(n)_+ \smashprod \jcal_1(nU,-) \smashprod j$.
These maps are weak equivalences in $O(n) \Sp_\qq$ so we
have a Quillen pair between the localisations
by \cite[Proposition 3.3.18]{hir03}.
The adjunction $((-)\smashprod_{\jcal_n}\jcal_1, \alpha_n^*)$
on non-rationalised model categories is a Quillen equivalence
by \cite[Section 8]{barnesoman13}.
So we can apply \cite[Proposition 2.3]{hov01}
to see that the localised adjunction is a
Quillen equivalence.
\end{proof}

If we apply the left derived functor of
${\res_0^n/O(n)}$ to the maps in $Q_n'$
we obtain the set
\[
\jcal_n \smashprod J_{\qq \Top} =
\{
\jcal_n(U,-) \smashprod j \ | \ j \in J_{\qq \Top} \ U \in \skel \jcal_n
\}
\]
which we used to localise the $n$-homogeneous model structure.
Hence we obtain the following result by the same proof as
for Proposition \ref{prop:QEstable1}. Again we use \cite[Section 8]{barnesoman13}
to see that $({\res_0^n/O(n)},{\ind_0^n \varepsilon^*})$ is a Quillen equivalence
between the non-rationalised model categories.

\begin{proposition}\label{prop:indrational}
Define $n \homog \jcal_0 \Top_\qq$ to be the left Bousfield localisation
of the model category $n \homog \jcal_0 \Top$ at the set $\jcal_n \smashprod J_{\qq \Top}$.
There is a Quillen equivalence
\[
\xymatrix@C+1cm{
O(n) \ltimes(\jcal_n \Top)_{\qq}
\ar@<+1ex>[r]^-{\res_0^n/O(n)}
&
\ar@<+1 ex>[l]^-{\ind_0^n \varepsilon^*}
n \homog \jcal_0 \Top_\qq.
}
\]
\end{proposition}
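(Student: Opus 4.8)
The plan is to mimic the proof of Proposition \ref{prop:QEstable1}, with the adjunction $((-)\smashprod_{\jcal_n}\jcal_1, \alpha_n^*)$ replaced by $({\res_0^n/O(n)}, {\ind_0^n \varepsilon^*})$ and the model category $O(n)\Sp$ replaced by $n \homog \jcal_0 \Top$.

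First I would check that the two localised functors still form a Quillen pair. The objects $O(n)_+ \smashprod \jcal_n(U,-)$ are cofibrant in the stable model structure on $O(n) \ltimes (\jcal_n \Top)$ (they are objectwise $O(n)$-free and lie among the generators of that structure), so on a map of the form $O(n)_+ \smashprod \jcal_n(U,-) \smashprod j$ with $j \in J_{\qq \Top}$ the left derived functor of ${\res_0^n/O(n)}$ agrees with ${\res_0^n/O(n)}$ itself; as recorded just before the statement, this map is carried to $\jcal_n(U,-) \smashprod j$. Hence ${\res_0^n/O(n)}$ takes the localising set $Q_n'$ of $O(n) \ltimes (\jcal_n \Top)_\qq$ onto the set $\jcal_n \smashprod J_{\qq \Top}$, which by construction consists of weak equivalences in $n \homog \jcal_0 \Top_\qq$. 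By \cite[Proposition 3.3.18]{hir03} the pair $({\res_0^n/O(n)}, {\ind_0^n \varepsilon^*})$ is then a Quillen pair from $O(n) \ltimes (\jcal_n \Top)_\qq$ to $n \homog \jcal_0 \Top_\qq$.

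Next I would promote this to a Quillen equivalence. By \cite[Section 8]{barnesoman13} the adjunction $({\res_0^n/O(n)}, {\ind_0^n \varepsilon^*})$ is a Quillen equivalence between the stable model structure on $O(n) \ltimes (\jcal_n \Top)$ and $n \homog \jcal_0 \Top$; both of these are left proper and cellular, so their left Bousfield localisations exist and are again left proper and cellular. Since $O(n) \ltimes (\jcal_n \Top)_\qq$ is the localisation at $Q_n'$, and $n \homog \jcal_0 \Top_\qq$ is the localisation at $\jcal_n \smashprod J_{\qq \Top}$, which is precisely the image of $Q_n'$ under the left derived functor of ${\res_0^n/O(n)}$, the hypotheses of \cite[Proposition 2.3]{hov01} are satisfied and I would conclude that the localised adjunction is a Quillen equivalence, which is the statement.

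I expect the only real subtlety to be the strict identification $\mathbb{L}({\res_0^n/O(n)})(Q_n') = \jcal_n \smashprod J_{\qq \Top}$ that feeds into \cite[Proposition 2.3]{hov01}: one needs that no genuine cofibrant replacement intervenes, so that one obtains this set on the nose rather than merely a weakly equivalent one, which is why it matters that $O(n)_+ \smashprod \jcal_n(U,-)$ is already cofibrant, together with the fact that taking $O(n)$-orbits of $O(n)_+ \smashprod X$ returns $X$ so that the smash factor $j$ is left untouched. Both points are precisely the content of the paragraph immediately preceding the statement, so the written proof is very short. Should one wish to avoid \cite[Proposition 2.3]{hov01} and argue directly instead, the remaining thing to verify would be that ${\ind_0^n \varepsilon^*}$ sends fibrant objects of $n \homog \jcal_0 \Top_\qq$ to fibrant objects of $O(n) \ltimes (\jcal_n \Top)_\qq$; this follows because $\ind_0^n$ is assembled from the homotopy-limit-type functor $\nat_{\jcal_0 \Top}(\res_0^n \jcal_n(V,-), -)$, which preserves objectwise $\h \qq$-local spaces by the Quillen bifunctor argument already used in the proof of Theorem \ref{thm:ratpoly}, and it is a right Quillen functor for the underlying stable structures.
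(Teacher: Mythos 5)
Your proof is correct and follows essentially the same route as the paper: identify the image of $Q_n'$ under the left Quillen functor as $\jcal_n \smashprod J_{\qq \Top}$, invoke \cite[Proposition 3.3.18]{hir03} to get a Quillen pair between the localisations, then upgrade to a Quillen equivalence via the non-rationalised equivalence of \cite[Section 8]{barnesoman13} and \cite[Proposition 2.3]{hov01}. The extra remarks you add about cofibrancy of $O(n)_+ \smashprod \jcal_n(U,-)$, the identification $(O(n)_+ \smashprod X)/O(n) \cong X$, and the alternative direct check on $\ind_0^n \varepsilon^*$ are all sound, though the paper leaves these implicit.
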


Thus we have now shown that $n \homog \jcal_0 \Top_\qq$
has the correct homotopy category, namely the homotopy category of rational
spectra with an action of $O(n)$. We can identify the derived composite 
of the above adjunctions and learn more about 
the cofibrant-fibrant objects of $n \homog \jcal_0 \Top_\qq$.

\begin{corollary}\label{cor:nhomogisrational}
There is an equivalence of homotopy categories 
\[
\ho (O(n)\Sp_\qq)
\cong
\ho (n \homog \jcal_0 \Top_\qq).
\]
Let $\Theta$ be a rational spectrum with $O(n)$-action, 
then the image of $\Theta$ in the category $\ho (n \homog \jcal_0 \Top_\qq)$
is given by 
\[
V \mapsto \Omega^\infty ((\Theta \smashprod S^{\rr^n \otimes V})_{hO(n)}).
\]
Let $F$ be a cofibrant and fibrant object of $n \homog \jcal_0 \Top_\qq$.
Then $F$ is an objectwise $\h \qq$-local $n$-homogeneous functor.
\end{corollary}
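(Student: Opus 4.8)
The plan is to assemble the statement from the two Quillen equivalences already in hand, Proposition \ref{prop:QEstable1} and Proposition \ref{prop:indrational}, together with Weiss's classification formula. First I would obtain the equivalence of homotopy categories by simply composing: a Quillen equivalence induces an equivalence of homotopy categories, and the composite of two such equivalences is again one, so
\[
\ho (O(n)\Sp_\qq)
\xrightarrow{\ \simeq\ }
\ho (O(n) \ltimes (\jcal_n \Top)_\qq)
\xrightarrow{\ \simeq\ }
\ho (n \homog \jcal_0 \Top_\qq)
\]
gives the claimed $\ho (O(n)\Sp_\qq) \cong \ho (n \homog \jcal_0 \Top_\qq)$. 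The underlying derived functor from left to right is $\mathbb{L}(\res_0^n/O(n)) \circ \mathbb{L}((-)\smashprod_{\jcal_n}\jcal_1)$.

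Next I would identify this derived composite explicitly. The key point is that the non-rationalised versions of these adjunctions already recover Weiss's classification: by \cite[Section 10]{barnesoman13} the derived composite of $\res_0^n/O(n)$ and $(-)\smashprod_{\jcal_n}\jcal_1$ on the non-rational model categories sends an $O(n)$-spectrum $\Theta$ to the $n$-homogeneous functor $V \mapsto \Omega^\infty ((\Theta \smashprod S^{\rr^n \otimes V})_{hO(n)})$, which is precisely the formula in Weiss's theorem (Theorem 7.3 of \cite{weiss95}, quoted above). Since the rational model structures are left Bousfield localisations of these, and the Quillen equivalences of Propositions \ref{prop:QEstable1} and \ref{prop:indrational} are obtained from the non-rational ones by localising, the derived functor on the rational side is computed by the same formula applied to a rationally fibrant-cofibrant representative of $\Theta$; as the formula only involves homotopy-invariant constructions ($(-)_{hO(n)}$ and $\Omega^\infty$), the value in $\ho (n \homog \jcal_0 \Top_\qq)$ is still represented by $V \mapsto \Omega^\infty ((\Theta \smashprod S^{\rr^n \otimes V})_{hO(n)})$.

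Finally, for the last sentence I would argue that a cofibrant-fibrant object $F$ of $n \homog \jcal_0 \Top_\qq$ is an objectwise $\h\qq$-local $n$-homogeneous functor. Being fibrant in $n \homog \jcal_0 \Top_\qq$ means, by Theorem \ref{thm:rathomog}, that $F$ is $n$-homogeneous (in particular $n$-polynomial) and that $\ind_0^n F$ is objectwise $\h\qq$-local. It remains to upgrade this to $F$ itself being objectwise $\h\qq$-local. This follows since $F$ is weakly equivalent, in the $n$-homogeneous model structure, to $\res_0^n/O(n)$ applied to a fibrant object of $O(n)\ltimes(\jcal_n\Top)_\qq$ (via the counit of the Quillen equivalence of Proposition \ref{prop:indrational}, which is a weak equivalence on cofibrant-fibrant objects); the fibrant objects there are objectwise $\h\qq$-local by the remark following that proposition's defining localisation, and $\res_0^n/O(n)$ is formation of $O(n)$-orbits objectwise, which preserves $\h\qq$-locality up to the homotopy-orbit correction already built into the derived functor. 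Concretely, evaluating the classification formula shows $F(V) \simeq \Omega^\infty((\Theta\smashprod S^{\rr^n\otimes V})_{hO(n)})$ with $\Theta$ rational, and $\Omega^\infty$ of a rational spectrum is $\h\qq$-local, giving objectwise $\h\qq$-locality. I expect this last identification — tracking the locality of $F$ through the chain of equivalences rather than merely that of $\ind_0^n F$ — to be the main obstacle, and I would handle it by leaning on the explicit formula for the derived equivalence obtained in the previous paragraph.
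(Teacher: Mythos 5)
Your argument follows the paper's essentially step for step: compose the two Quillen equivalences to get the equivalence of homotopy categories, cite \cite[Section 10]{barnesoman13} to identify the derived composite with Weiss's classification formula, and deduce objectwise $\h\qq$-locality of a fibrant-cofibrant $F$ by observing that $F$ is objectwise weakly equivalent to $V \mapsto \Omega^\infty((\Theta_F \smashprod S^{nV})_{hO(n)})$ with $\Theta_F$ a rational spectrum. The intermediate gloss about $\res_0^n/O(n)$ preserving $\h\qq$-locality ``up to the homotopy-orbit correction'' is vague and unnecessary, but the concrete formula argument you then give is exactly the one in the paper and is what carries the final claim.
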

\begin{proof}
Combining the two propositions above at the level of 
homotopy categories gives the equivalence. 
That the equivalence agrees 
with Weiss's classification theorem follows
from the proof of \cite[Theorem 10.1]{barnesoman13}.

Let $F$ be a cofibrant and fibrant object of $n \homog \jcal_0 \Top_\qq$.
Then $F$ is in particular $n$-homogeneous, so it defines
a spectrum $\Theta_F$ with $O(n)$-action.
We know that $\ind_0^n F$ is objectwise $\h \qq$-local,
hence the spectrum $\Theta_F$ is objectwise $\h \qq$-local
by the Quillen equivalences above.
Now consider the $n$-homogeneous functor defined by $\Theta_F$:
\[
V \mapsto \Omega^\infty((S^{nV} \smashprod \Theta_F)_{hO(n)}).
\]
This functor is objectwise $\h\qq$-local and it is objectwise
weakly equivalent to $F$, thus $F$ itself is objectwise $\h \qq$-local.
\end{proof}

Thus we have shown that $n \homog \jcal_0 \Top_\qq$
is a model for the homotopy theory of
$n$-homogeneous functors in $\jcal_0 \Top$ that
are objectwise $\h \qq$-local.
Secondly we have shown that such functors are determined, up to homotopy,
by rational spectra with an action of $O(n)$.

\addcontentsline{toc}{part}{Bibliography}
\bibliography{bibliography2}
\bibliographystyle{alpha}

\end{document}